\let\ams@starttoc\@starttoc
\let\@starttoc\ams@starttoc
\patchcmd{\@starttoc}{\makeatletter}{\makeatletter\parskip\z@}{}{}
\newcommand\CA{{\mathscr A}} 
\newcommand\CB{{\mathscr B}}
\newcommand\CD{{\mathscr D}}
\newcommand\CE{{\mathscr E}}
\newcommand\CAF{{\mathcal {AF}}} 
\newcommand\CIF{{\mathcal {IF}}} 
\newcommand\CRF{{\mathcal {RF}}} 
\newcommand\CIFM{{\mathcal {IFM}}} 
\newcommand\CRFM{{\mathcal {RFM}}} 
\newcommand\CAFM{{\mathcal {AFM}}}
\newcommand\BBC{{\mathbb C}}
\newcommand\BBK{{\mathbb K}}
\newcommand\BBQ{{\mathbb Q}}
\newcommand\BBZ{{\mathbb Z}}
\newcommand\codim{\operatorname{codim}}
\newcommand\Der{{\operatorname{Der}}}
\DeclareMathOperator{\LMP}{LMP}
\DeclareMathOperator{\GMP}{GMP}
\newcommand\pdeg{\operatorname{pdeg}}
\newcommand\rank{\operatorname{rank}}
\newcommand{\one}{\mathbbm{1}}
\numberwithin{equation}{section}
\theoremstyle{plain}
\newtheorem{lemma}[equation]{Lemma}
\newtheorem{theorem}[equation]{Theorem}
\newtheorem{corollary}[equation]{Corollary}
\newtheorem{proposition}[equation]{Proposition}
\theoremstyle{definition}
\newtheorem{defn}[equation]{Definition}
\newtheorem{remark}[equation]{Remark}
\newtheorem{example}[equation]{Example}
\begin{document}

\title[Free multiderivations of connected subgraph arrangements]
{Free multiderivations of connected subgraph arrangements}

\author[P.~M\"ucksch]{Paul M\"ucksch}
\address
{Institut für Algebra, Zahlentheorie und Diskrete Mathematik, Fakultät
für Mathematik und Physik, Leibniz Universität Hannover, Welfen-
garten 1, D-30167 Hannover, Germany}
\email{muecksch@math.uni-hannover.de}

\author[G.~R\"ohrle]{Gerhard R\"ohrle}
\address
{Fakult\"at f\"ur Mathematik,
	Ruhr-Universit\"at Bochum,
	D-44780 Bochum, Germany}
\email{gerhard.roehrle@rub.de}

\author[S.~Wiesner]{Sven Wiesner}
\address
{Fakult\"at f\"ur Mathematik,
	Ruhr-Universit\"at Bochum,
	D-44780 Bochum, Germany}
\email{sven.wiesner@rub.de}

\keywords{
	Free arrangement, 
	Free multiarrangement,
	Connected subgraph arrangement}

\allowdisplaybreaks

\begin{abstract}
	Cuntz and Kühne \cite{cuntzkuehne:subgrapharrangements}  introduced the class of connected subgraph arrangements $\CA_G$, depending on a graph $G$,  and classified all graphs $G$ such that the corresponding arrangement $\CA_G$ is free. We extend their result to the multiarrangement case and classify all graphs $G$ for which the corresponding  arrangement $\CA_G$ supports some multiplicity $\mu$ such that the  multiarrangement $(\CA_G,\mu)$ is free. 
\end{abstract}

\maketitle

	
\section{Introduction}

The study of modules of logarithmic vector fields tangent to hyperplane arrangements (in the sequel also referred to as derivation modules)
and in particular the question about their freeness over the coordinate ring of the ambient space
is a classic topic at the crossroads
of discrete geometry and combinatorics, commutative algebra and algebraic geometry initiated in seminal work
by K.\ Saito \cite{Saito80_LogForms} and H.\ Terao \cite{terao:freeI}.
Arrangements with free derivation modules are simply called free arrangements.
Subsequently, G.\ Ziegler \cite{ziegler:multiarrangements} extended the theory by  also taking multiplicities into account.
Such multiarrangements naturally arise from arrangement constructions, i.e.~restrictions to hyperplanes, and the freeness of their
mulitderivation modules is closely linked to the freeness of the original simple arrangement due to fundamental work by Ziegler
and M.\ Yoshinaga \cite{yoshinaga:characterization}.

To decide whether a given (multi-)arrangement is free is a hard problem in general and at its core lies 
Terao's conjecture, asserting that freeness only depends on the underlying combinatorial structure provided by the intersection lattice. To this day the conjecture is wide open.
Therefore, a natural approach in the investigation of such freeness questions is to restrict the view to special
classes of arrangements with additional structure.
A pivotal result along this line is a theorem of Terao  stating that all (complex) reflection arrangements are free \cite{Terao1980_FreeUniRefArr}.
Regarding multiarrangements, another milestone is also due to Terao \cite{terao:free coxeter multiarrangements} establishing
the freeness of certain natural multiplicities on Coxeter arrangements.
This was subsequently used  by Yoshinaga in \cite{yoshinaga:characterization} 
to establish a conjecture of Edelman and Reiner about the freeness of certain deformations of Weyl arrangements.

In the recent work \cite{cuntzkuehne:subgrapharrangements}, M.\ Cuntz and L.\ K\"uhne introduce a new class stemming from simple graphs
which are defined as follows.
\begin{defn}[{\cite[Def.~1.1]{cuntzkuehne:subgrapharrangements}}]
	Let $G=(N, E)$ be an undirected (simple) graph with a set of vertices $N=\{1,\dots,n\}$ and a set of edges $E$.
	The \emph{connected subgraph arrangement} $\CA_G$ in $V=\mathbb{Q}^n$ associated to $G$ is defined as
	\[\CA_G:=\{H_I\mid \emptyset\neq I\subseteq N \text{ if } G[I] \text{ is connected}\},\]
	where $H_I$ is the hyperplane 
	\[H_I= \ker\sum_{i\in I}x_i\]
	and $G[I]$ is the induced subgraph on the vertices $I\subseteq N$. 
\end{defn}
Cuntz and K\"uhne gave a complete characterization of free arrangements within this class (see Theorem \ref{theorem: FreeConnectedSubgraphArrangements}).
In view of their work and because of the importance of the broader perspective on freeness within the framework of multiarrangements,
the natural question arises if one can describe certain free multiplicities extending their classification.
This is the aim of this paper. Our first main result is as follows.

\begin{theorem}[{Corollary \ref{coro:GraphsWithFreeMultiplicities}}]
	\label{thm:Main1}
	Let $G$ be a connected graph. There exists a multiplicity $\mu$ such that the connected subgraph multiarrangement $(\CA_G,\mu)$ is free if and only if $G$ is $G_1$, $G_2$, a path-graph, a cycle-graph, an almost-path-graph, or a path-with-triangle-graph (see Definition \ref{definition: cuntz kühne graph families}
	and Figure \ref{fig:G1_8}).
\end{theorem}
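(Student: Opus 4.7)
The plan is to handle the two implications of the biconditional separately. For sufficiency, for each graph $G$ in the listed families I would exhibit an explicit multiplicity $\mu$ such that $(\CA_G,\mu)$ is free. For necessity, I would show that any connected graph $G$ outside these families contains an obstruction that forbids the existence of any free multiplicity on $\CA_G$.

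For sufficiency, the first observation is that by Cuntz and K\"uhne's classification of graphs $G$ with $\CA_G$ free (Theorem~\ref{theorem: FreeConnectedSubgraphArrangements}), the trivial multiplicity $\mu\equiv 1$ already handles several of the listed cases (in particular $G_1$, $G_2$, path-graphs, and those cycles for which the simple arrangement is free). For the remaining families --- the non-free cycles, almost-path-graphs, and path-with-triangle-graphs --- I would write down explicit candidate derivations $\theta_1,\dots,\theta_n\in D(\CA_G,\mu)$ for a carefully chosen $\mu$ and verify they form a basis using Saito's criterion: check $\theta_i(\alpha_H)\in S\alpha_H^{\mu(H)}$ for every $H\in\CA_G$, and that $\det(\theta_i(x_j))$ equals $\prod_{H}\alpha_H^{\mu(H)}$ up to a nonzero scalar. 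Where writing down a basis directly is unwieldy, I would construct $(\CA_G,\mu)$ inductively by peeling off one hyperplane at a time via the Abe--Terao--Wakefield addition/deletion theorem for multiarrangements, exploiting the near-linear structure of the listed graph families.

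For necessity, the key tool is localization: if $(\CA_G,\mu)$ is free, then the localized multiarrangement $((\CA_G)_X,\mu|_X)$ is free at every flat $X$ of the intersection lattice, by a theorem of Ziegler. The goal is to translate this into a graph-theoretic statement, namely that removing certain vertices or restricting to certain induced subgraphs produces multiarrangements that are never free. I would identify a short list of minimal forbidden graphs $H_1,\dots,H_r$ such that any connected graph not appearing in the theorem's list must contain some $H_i$ as an induced subgraph (or as a configuration that shows up after localization), and then prove that no $H_i$ admits any free multiplicity whatsoever. The final step is a combinatorial verification: going through the taxonomy of connected graphs and checking that avoidance of $H_1,\dots,H_r$ forces $G$ into one of the six permitted families.

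The main obstacle, I expect, is the non-freeness step of the necessity direction: showing that for each forbidden $H_i$ and \emph{every} multiplicity $\mu\colon\CA_{H_i}\to\BBZ_{\ge 1}$, the module $D(\CA_{H_i},\mu)$ fails to be free. A single $\mu$ can be ruled out by a finite computation in \Singular, but here one must handle an infinite family of multiplicities uniformly. This will require a structural obstruction --- for example, locating a minimal generator of the derivation module in a degree incompatible with any candidate exponent sequence, or exhibiting a subflat at which the characteristic polynomial of the restricted multiarrangement fails to factor over $\BBZ$ independently of $\mu$. Finding a conceptual reason that rules out all multiplicities at once, rather than arguing case by case, is where the substantive difficulty of the theorem lies.
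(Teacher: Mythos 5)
Your sufficiency direction contains a genuine error: you have the Cuntz--K\"uhne classification backwards. Theorem \ref{theorem: FreeConnectedSubgraphArrangements} says $\CA_G$ is free (as a simple arrangement) \emph{precisely} when $G$ is a path-graph, cycle-graph, almost-path-graph, or path-with-triangle-graph; in particular the simple arrangements $\CA_{G_1}$ and $\CA_{G_2}$ are \emph{not} free (the paper even reproves this without computer aid in Proposition \ref{proposition: SimpleG1andG2notFree}), so the trivial multiplicity $\mu\equiv 1$ does not handle $G_1$ and $G_2$, and there are no ``non-free cycles, almost-path-graphs, and path-with-triangle-graphs'' requiring bespoke Saito-criterion bases --- those families are exactly the ones where $\mu\equiv 1$ works. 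The cases $G_1$ and $G_2$ are the genuinely new content of the theorem: one must exhibit nontrivial multiplicities $\mu_1$ on $\CA_{G_1}$ and $\mu_2$ on $\CA_{G_2}$ and prove $(\CA_{G_i},\mu_i)$ free, which the paper does in Proposition \ref{proposition: Free G1 and G2 multiplicities} via explicit induction and recursion tables (the multiplicity $\mu_2$ is in fact the Ziegler restriction of the Edelman--Reiner arrangement). Your proposal, as written, would omit exactly these two cases while expending effort constructing bases for arrangements that are already free.

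On the necessity direction your outline matches the paper's strategy (forbidden configurations detected through localizations, using Lemma \ref{lemma:ind_subgraph contract localization} and Theorem \ref{theorem: Multilocalizations are free}), but the step you flag as the substantive difficulty --- ruling out \emph{all} multiplicities on each forbidden graph uniformly --- is left open, and the paper's resolution is worth naming: for each of the forbidden graphs $G_3,\dots,G_{20}$ one finds a rank-$3$ (or higher) localization of $\CA_{G_i}$ that is a \emph{generic} arrangement with more hyperplanes than its rank, and Yoshinaga's theorem (Theorem \ref{theorem: Yoshinaga generic not free}) states that such generic arrangements are \emph{totally non-free}, i.e.\ non-free for every multiplicity at once (Proposition \ref{proposition: G3 to G19 totally non free}, Corollary \ref{corollary: G3 subgraph not free}). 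A related subtlety you miss, as a consequence of the first error: since $G_1$ and $G_2$ \emph{do} carry free multiplicities, they cannot serve as obstructions in the multiarrangement setting, so the Cuntz--K\"uhne list of forbidden graphs $G_1,\dots,G_8$ must be replaced by the enlarged list $G_3,\dots,G_{20}$, and the graph-theoretic case analysis (degrees, cycles sharing edges, trees) must be redone to route every bad graph to one of these totally non-free configurations without passing through $G_1$ or $G_2$; finally, the four-vertex case is settled by noting every connected graph on four vertices is one of $G_1$, $G_2$, $P_4$, $C_4$, $A_{3,2}$, $\Delta_{3,1}$.
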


A natural and important question stemming from the study of free multi-Coxeter arrangements by Terao \cite{terao:free coxeter multiarrangements}
is about the existence of constant free multiplicities.
Our next result gives the following answer for connected subgraph arrangements.

\begin{theorem}[{Proposition \ref{prop:G_1_2ConstNotFree} and Corollary \ref{coro:ConstMultFree}}]
	\label{thm:Main2}
	Let $G$ be a connected graph with at least three vertices and $\mu\equiv c > 1$ is
	a constant multiplicity on $\CA_G$. 
	Then $(\CA_G,\mu)$ is free if and only if $G$ is a path-graph $P_n$  or $G$ is the cycle-graph $C_3$ and $c=3$.
\end{theorem}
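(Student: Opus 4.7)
The plan is to combine Theorem~\ref{thm:Main1} with the two constituent results cited in the statement, namely Proposition~\ref{prop:G_1_2ConstNotFree} and Corollary~\ref{coro:ConstMultFree}. Theorem~\ref{thm:Main1} already restricts attention to the six graph families $G_1, G_2$, path-graphs, cycle-graphs, almost-path-graphs, and path-with-triangle-graphs; the proposition handles $G_1$ and $G_2$, while the corollary handles the remaining four families. The theorem itself then follows by assembly.

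For path graphs $P_n$, I would establish the positive half by noting that the linear change of variables $y_i := x_1 + x_2 + \cdots + x_i$ (with $y_0 := 0$) sends each defining form $\sum_{\ell=i}^{j} x_\ell$ to $y_j - y_{i-1}$, identifying $\CA_{P_n}$ with the essential Coxeter arrangement of type $A_n$. Terao's theorem on constant-multiplicity Coxeter multiarrangements \cite{terao:free coxeter multiarrangements} then immediately yields freeness of $(\CA_{P_n},k)$ for every $k\ge 1$. The only other positive case is $(\CA_{C_3},3)$, which I would verify directly by exhibiting three explicit multiderivations of the correct polynomial degrees and applying Saito's criterion (the coefficient determinant equals, up to a nonzero scalar, $\prod_{H\in\CA_{C_3}} \alpha_H^{3}$).

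The main obstacle lies in the negative direction, which is where Proposition~\ref{prop:G_1_2ConstNotFree} and Corollary~\ref{coro:ConstMultFree} do the real work. For each of $G_1$, $G_2$, cycles $C_n$ with $n\ge 4$, $C_3$ with $k\notin\{1,3\}$, almost-path-graphs, and path-with-triangle-graphs, one must rule out \emph{every} constant $k>1$. My strategy is via exponent obstructions: freeness of $(\CA_G,k)$ forces the exponents to sum to $k\cdot|\CA_G|$, and additional constraints coming either from restrictions to appropriate flats or from Yoshinaga-type criteria cannot simultaneously be satisfied. Finding the right invariants to rule out each subcase—case by case, family by family—is where the bulk of the effort goes; the positive cases, by contrast, are short once the correct framework (Saito's criterion, respectively Terao's Coxeter theorem) is in place.
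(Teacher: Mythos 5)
Your positive half is in order: the change of variables $y_i:=x_1+\cdots+x_i$ identifying $\CA_{P_n}$ with the Coxeter arrangement of type $A_n$, followed by Terao's theorem on constant multiplicities, is precisely the paper's argument for path-graphs. For $(\CA_{C_3},3\one)$ the paper proceeds differently, running an addition--deletion induction (Table \ref{C3ThreeIndFree}) that shows the multiarrangement is in fact \emph{inductively} free with exponents $(7,7,7)$; your Saito-criterion plan would also suffice in principle, but as written it is a promissory note --- you never exhibit the three degree-$7$ derivations, and producing them is the entire content of that step, arguably harder than the paper's $21$-row induction table.

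The genuine gap is the negative direction, which you explicitly defer (``finding the right invariants \dots is where the bulk of the effort goes''), and two concrete ideas needed there are absent from your sketch. First, $C_n$, $A_{n,k}$ and $\Delta_{n,k}$ are infinite families, so ``case by case, family by family'' does not terminate unless one invokes Lemma \ref{lemma:ind_subgraph contract localization}: edge contractions and induced subgraphs realize $\CA_{C_3}$, $\CA_{C_4}$, $\CA_{A_{3,2}}$ and $\CA_{\Delta_{3,1}}$ as \emph{localizations} of the larger arrangements, and freeness of multiarrangements localizes (Theorem \ref{theorem: Multilocalizations are free}), reducing everything to finitely many base cases. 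Second, the invariant that actually kills those base cases, as well as $G_1$ and $G_2$, is not an exponent-sum constraint: the sum condition is satisfiable by many candidate exponent tuples, and restriction-based arguments are delicate since Euler multiplicities of restrictions of constant multiplicities are not constant. The paper's tool is the equality $\LMP(2)=\GMP(2)$ for free multiarrangements (Theorem \ref{MixedEquality}) combined with the balancedness monotonicity of $\GMP(2)$ (Lemma \ref{lemma:LMP_GMP_nonfree}): one computes $\LMP(2)$ exactly by counting rank-$2$ localizations with three hyperplanes via the graph combinatorics of Lemma \ref{lemma: connected subgraph arrangements are locallyA2} and Wakamiko's Theorem \ref{WakaTheo} (Remark \ref{rem:alg2}), and shows it strictly exceeds the $\GMP(2)$ of even the \emph{most balanced} admissible exponents --- e.g.\ $75.25k^2$ versus at most $73.5k^2$ for $G_1$ with $k$ even, and for $C_3$ with $k=2$ the value $\LMP(2)=66$ against $\GMP(2)\le 65$ over all three candidate tuples. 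Your appeal to ``Yoshinaga-type criteria'' is a priori available --- condition \eqref{eq:star} holds automatically for constant multiplicities --- but it merely converts the problem into non-freeness of a simple arrangement in one higher dimension, which the paper never needs for this theorem; without the mixed-product comparison your proposed obstructions would not close any of the subcases.
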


To classify free multiplicities in general is a notoriously hard problem. Complete answers are known only in very
few cases. E.g.\ characterizing the free multiplicities even on braid arrangements of rank greater than $3$
is an unresolved problem, cf.\ \cite{dipasquale: free A3 classification}.
Nonetheless, in our study it turns out that for a certain subarrangement $\CD$ of  $\CA_{C_3}$ which is obtained from $\CA_{C_3}$ by deleting
a single hyperplane, the task is feasible. This constitutes our third theorem (Theorem \ref{Theorem: AllDMultiplicities}) which was previously obtained by M.\ DiPasquale and M.\ Wakefield in \cite{dipasquale: X3 moduli freeness} by different methods.

\begin{theorem}[{\cite{dipasquale: X3 moduli freeness}}]
	\label{thm:Main3}
	Let $(\mathscr{D},\mu)$ be the multiarrangement $$Q(\mathscr{D},\mu)=x_1^ax_2^bx_3^c(x_1+x_2)^d(x_1+x_3)^e(x_2+x_3)^f$$ with $a,b,c,d,e,f \geq 1$. Then
	$(\mathscr{D},\mu)$ is free if and only if  $\mu=(2k,2k,2k,1,1,1)$ for some $k\in\mathbb{N}_{\geq 1}$ where the order of the hyperplanes is as above.
\end{theorem}

To conclude, our last chief result describes certain families of free multiplicities on $\CA_{C_3}$.
\begin{theorem}[{Proposition \ref{prop:SpecialFreeMultC_3}}]
	\label{thm:Main4}
	Let $(\CA_{C_3},\mu)$ be the multiarrangement $$Q(\CA_{C_3},\mu)=x_1^ax_2^bx_3^c(x_1+x_2)^d(x_1+x_3)^e(x_2+x_3)^f(x_1+x_2+x_3)^g$$ with multiplicity $\mu=(a,b,c,d,e,f,g), (d\geq e\geq f)$. Suppose that either 
	\begin{enumerate}
		\item[(i)] $\mu=(k,k,k,r,1,1,k)$, where $1\leq k \leq 3$ and $r\geq 1$, or
		\item[(ii)] $\mu=(k,k,k,r,1,1,k)$, where $k>3$ and $r\geq 2k-5$.
	\end{enumerate}
	Then $(\CA_{C_3},\mu)$ is free. Moreover, if $r\geq 2k$, then $(\CA_{C_3},\mu)$ is  inductively free with exponents $(r,2k+1,2k+1)$.
    For $\mu = (3,3,3,1,1,1,3)$, $(\CA_{C_3},\mu)$ is not inductively free.
\end{theorem}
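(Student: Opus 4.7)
The proof revolves around the Abe-Terao-Wakamiko addition theorem for multiarrangements applied to the hyperplane $H_{12} = \ker(x_1+x_2)$, whose multiplicity is the parameter $r$. The key observation is that the Ziegler restriction $((\CA_{C_3})^{H_{12}}, \mu^*)$ does not depend on $r$, since Ziegler's multiplicity on each codimension-two flat sums $\mu$ over the other six hyperplanes. Parametrizing $H_{12}$ by $(t, -t, s)$, these hyperplanes trace four lines with multiplicities $(2k, 2k, 1, 1)$: the pairs $\{H_1, H_2\}$ and $\{H_3, H_{123}\}$ each contribute mass $2k$ on $\{t=0\}$ and $\{s=0\}$, while $H_{13}$ and $H_{23}$ each contribute mass $1$. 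The explicit derivations $\theta_1 = t^{2k+1}\partial_t + s^{2k+1}\partial_s$ and $\theta_2 = t^{2k}s\,\partial_t + ts^{2k}\partial_s$ have Saito determinant $t^{2k}s^{2k}(t^2-s^2)$, equal up to sign to the defining polynomial, so $((\CA_{C_3})^{H_{12}}, \mu^*)$ is free with exponents $(2k+1, 2k+1)$.

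\textbf{The ``moreover'' claim: inductive freeness for $r \ge 2k$.} I would first establish the base case $r = 2k$: namely, that $(\CA_{C_3}, (k,k,k,2k,1,1,k))$ is free with exponents $(2k, 2k+1, 2k+1)$ by exhibiting an explicit Saito basis exploiting the $x_1 \leftrightarrow x_2$ symmetry of the multiplicity. For $r > 2k$, the induction step is an immediate application of the ATW addition theorem together with the rank-two restriction just computed. Since every rank-two multiarrangement is inductively free, the chain of additions carries inductive freeness all the way through.

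\textbf{Freeness for the remaining range and the main obstacle.} For $k = 1$, the base case $r = 1$ is the simple $\CA_{C_3}$, free by Cuntz-K\"uhne. For the remaining base cases --- $r = 1$ with $k = 2, 3$ in case (i), and $r = 2k-5$ with $k > 3$ in case (ii) --- one constructs a Saito basis directly. Note that $\theta(x_i) \in (x_i^k)$ for $i = 1, 2, 3$ forces any nonzero derivation to have degree at least $k$; consequently, when $r < k$ the exponents cannot take the form $(r, 2k+1, 2k+1)$, and the explicit bases must realize some other exponent pattern. ATW-addition then propagates freeness from each base case upward in $r$. I expect the case (ii) base case, which requires a uniform Saito basis for all $k > 3$ at the threshold $r = 2k-5$ (and falls below $k$ when $k = 4$), to be the principal technical obstacle.

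\textbf{Not inductively free for $\mu = (3,3,3,1,1,1,3)$.} For each $H$ I compute the Ziegler restriction exponents: $(7, 7)$ for $H \in \{H_{12}, H_{13}, H_{23}\}$ (the rank-two restriction has multiplicities $(6, 6, 1, 1)$) and $(6, 6)$ for $H \in \{H_1, H_2, H_3, H_{123}\}$ (the rank-two $A_2$-type restriction carries constant multiplicity $4$). An inductive step via $H$ requires the deletion $(\CA_{C_3}, \mu - \delta_H)$ to be free with exponents $R \cup \{e\}$, where $R$ is the restriction exponent pair and $e = |\mu| - 1 - |R| \in \{0, 2\}$. The case $e = 0$ is impossible, since a constant derivation must vanish on $x_1, x_2, x_3$ (still present in the deletion with positive multiplicity) and is therefore zero. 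The case $e = 2$ is handled as follows: for $H = H_{123}$, the conditions $\theta(x_i) \in (x_i^3)$ for $i = 1, 2, 3$ force every degree-two derivation to be zero; for $H = H_1$ (and symmetrically for $H_2, H_3$), a degree-two derivation reduces to $\lambda x_1^2 \partial_1$, which fails $\theta(x_1 + x_2) \in (x_1 + x_2)$ unless $\lambda = 0$. Hence no hyperplane admits a valid ATW addition step, so $(\CA_{C_3}, (3,3,3,1,1,1,3))$ is not inductively free.
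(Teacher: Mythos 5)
Your proposal contains correct ingredients (the explicit rank-two basis with Saito determinant $t^{2k}s^{2k}(t^2-s^2)$ is essentially the paper's Lemma \ref{lemma: special B2 multiplicities}(1), and your degree-bound observations are fine), but it has a central conceptual flaw: you feed the Ziegler-type \emph{sum} restriction into the Abe--Terao--Wakefield addition--deletion theorem, whereas Theorem \ref{thm:add-del} requires the \emph{Euler} multiplicity $\mu^*$ of Definition \ref{def:Euler}, which is not the sum. On the two triple points of $\CA_{C_3}^{H_{12}}$, i.e.\ the flats $\{H_{12},H_1,H_2\}$ and $\{H_{12},H_3,H_{123}\}$ with local multiplicities $(r,k,k)$, the Euler multiplicity is the common Wakamiko exponent (Theorem \ref{WakaTheo}) of $(r,k,k)$ and $(r-1,k,k)$, namely $\lfloor (r+2k)/2\rfloor$; this equals your sum $2k$ only when $r\geq 2k-1$. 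So your key claim that the restriction ``does not depend on $r$'' is false for the object the addition theorem needs: your additions are justified only in the regime $r\geq 2k-1$, and your plan to propagate upward from the low base cases with the same restriction data fails, since there the Euler restriction is $(2k-2,2k-2,1,1)$ or $(2k-1,2k-1,1,1)$, with exponents $(2k-1,2k-1)$, resp.\ $(2k-1,2k+1)$ --- exactly why the paper needs both cases (1) and (2) of Lemma \ref{lemma: special B2 multiplicities}. The same confusion sinks your non-inductive-freeness argument for $\mu=(3,3,3,1,1,1,3)$: the Euler restrictions have exponents $(3,5)$ for $H\in\{H_{12},H_{13},H_{23}\}$ and $(4,5)$ for the multiplicity-$3$ hyperplanes, of order $8$ resp.\ $9$, not your $(7,7)$ and $(6,6)$ of orders $14$ and $12$; hence ``deletion free with exponents $R\cup\{e\}$ with $e\in\{0,2\}$'' is not what an inductive step requires, and your elimination of $e=0,2$, though internally correct, rules out the wrong condition. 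The correct obstruction is the paper's short one: since $\exp(\CA_{C_3},\mu)=(5,5,5)$, any admissible step would force restriction exponents $(5,5)$, i.e.\ $|\mu^*|=10$, which no hyperplane achieves.

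Second, even where your framework is sound, the actual content of the theorem is deferred rather than proved: the Saito bases at $r=2k$, at $r=1$ for $k=2,3$, and uniformly in $k$ at $r=2k-5$ are only announced, and you yourself flag the last as ``the principal technical obstacle.'' The paper needs no such bases: for $r\geq 2k-1$ it obtains the deletion $(\CA_{C_3}\setminus\{H_{23}\},(k,k,k,r,1,0,k))$ inductively free with exponents $(r,2k,2k+1)$ from the supersolvability criterion (Theorem \ref{theorem: free vertex}), adds $H_{23}$ (Euler restriction $(k,k,r,1)$, free with exponents $(2k+1,r)$) to reach $(r,2k+1,2k+1)$ for $r\geq 2k$, and then walks \emph{downward} in $r$ via the deletion direction of Theorem \ref{thm:add-del} at $H_{12}$, where only freeness of the larger multiplicity and of the Euler restriction (Lemma \ref{lemma: special B2 multiplicities}) is needed --- no bottom base case at all; the chain halts at $r=2k-5$ with exponents $(2k-1,2k-1,2k-1)$ because the next restriction exponents $(2k-3,2k-1)$ no longer embed. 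Finally, even granting your base case at $r=2k$, a bare Saito basis yields freeness but not membership in $\CIFM$, so your chain of additions cannot certify the ``moreover'' clause; the paper gets inductive freeness because Theorem \ref{theorem: free vertex} already outputs it. (A minor slip: the addition--deletion theorem is due to Abe--Terao--\emph{Wakefield}; Wakamiko's contribution is the rank-two exponent computation.)
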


For the notion of inductive freeness, see Definition \ref{def:indfree}.

The paper is organized as follows.
In \S \ref{sect:prelim} we review the required notions from the theory of free multiarrangements
and connected subgraph arrangements.
The next section 
presents the arguments leading to a proof of Theorem \ref{thm:Main1}.
In \S \ref{sect:FreeConstantMultiplicities}, we investigate the natural question which connected subgraph arrangements support free constant
multiplicities leading to our second main 
Theorem \ref{thm:Main2}.
Along the way,  in \S
\ref{subsect:nonfreeag} we present a non-computational proof 
of the non-freeness of the connected subgraph arrangements stemming from $G_1$ up to $G_8$ from Figure \ref{fig:G1_8}. This was derived in 
 \cite{cuntzkuehne:subgrapharrangements} by means of
elaborate computer calculations.
In our final section 
we study free multiplicities on two special arrangements,
culminating in Theorems \ref{thm:Main3} and \ref{thm:Main4}.
In the appendices 
we list some tables containing data which we computed and utilised for our 
proofs.


\section{Recollections and Preliminaries}
\label{sect:prelim}

\subsection{Hyperplane arrangements, multiarrangements and their freeness}
\label{ssect:hyper}

Let $\BBK$ be a field of characteristic $0$ and let
$V = \BBK^\ell$ 
be an $\ell$-dimensional $\BBK$-vector space.
A \emph{hyperplane arrangement} is a pair
$(\CA, V)$, where $\CA$ is a finite collection of hyperplanes in $V$.
Usually, we simply write $\CA$ in place of $(\CA, V)$.
We write $|\CA|$ for the number of hyperplanes in $\CA$.
The empty arrangement in $V$ is denoted by $\Phi_\ell$.

The \emph{lattice} $L(\CA)$ of $\CA$ is the set of subspaces of $V$ of
the form $H_1\cap \ldots \cap H_i$ where $\{ H_1, \ldots, H_i\}$ is a subset
of $\CA$. 
For $X \in L(\CA)$, we have two associated arrangements, 
firstly
$\CA_X :=\{H \in \CA \mid X \subseteq H\} \subseteq \CA$,
the \emph{localization of $\CA$ at $X$}, 
and secondly, 
the \emph{restriction of $\CA$ to $X$}, $(\CA^X,X)$, where 
$\CA^X := \{ X \cap H \mid H \in \CA \setminus \CA_X\}$.
Note that $V$ belongs to $L(\CA)$
as the intersection of the empty 
collection of hyperplanes and $\CA^V = \CA$. 
The lattice $L(\CA)$ is a partially ordered set by reverse inclusion:
$X \le Y$ provided $Y \subseteq X$ for $X,Y \in L(\CA)$. The rank function on $L(\CA)$ is given by $\rank(X)=\codim X$ for $X\in L(\CA)$ and we define $L(\CA)_k=\{X\in L(\CA)\mid \rank(X)=k\}$.

Let $S = S[V] \cong \BBK[x_1, \ldots , x_\ell]$ be the coordinate ring of the ambient space with its usual grading by polynomial degree:
$S = \oplus_{p \in \BBZ}S_p$, where $S_p$ is the $\BBK$-subspace of polynomials of degree $p$ and $S_p = 0$ in case $p < 0$.

A \emph{multiarrangement}  is a pair
$(\CA, \mu)$ consisting of a hyperplane arrangement $\CA$ and a 
\emph{multiplicity} function
$\mu : \CA \to \BBZ_{\ge 0}$ associating 
to each hyperplane $H$ in $\CA$ a non-negative integer $\mu(H)$.
The \emph{order} of the multiarrangement $(\CA, \mu)$ 
is defined by 
$|\mu| := 
\sum_{H \in \CA} \mu(H)$.
For a multiarrangement $(\CA, \mu)$, the underlying 
arrangement $\CA$ is sometimes called the associated 
\emph{simple} arrangement, and so $(\CA, \mu)$ itself is  
simple if and only if $\mu(H) = 1$ for each $H \in \CA$. We sometimes denote this simple multiplicity by $\mu = \one$.
For two multiplicities $\mu, \mu'$ on an arrangement $\CA$ we sometimes write $\mu \geq \mu'$ if $\mu(H) \geq \mu'(H)$ for all $H \in \CA$.

Let $(\CA, \mu)$ be a multiarrangement in $V$ and let 
$X \in L(\CA)$. The 
\emph{localization of $(\CA, \mu)$ at $X$} is defined to be $(\CA_X, \mu_X)$,
where $\mu_X = \mu |_{\CA_X}$.

The \emph{defining polynomial} $Q(\CA, \mu)$ 
of the multiarrangement $(\CA, \mu)$ is given by 
\[
Q(\CA, \mu) := \prod_{H \in \CA} \alpha_H^{\mu(H)},
\] 
a polynomial of degree $|\mu|$ in $S$.

Let $\Der(S)$ be the $S$-module of algebraic $\BBK$-derivations of $S$.
Using the $\BBZ$-grading on $S$, $\Der(S)$ becomes a graded $S$-module.
For $i = 1, \ldots, \ell$, 
let $D_i := \partial/\partial x_i$ be the usual derivation of $S$.
Then $D_1, \ldots, D_\ell$ is an $S$-basis of $\Der(S)$.
We say that $\theta \in \Der(S)$ is 
\emph{homogeneous of polynomial degree p}
provided 
$\theta = \sum_{i=1}^\ell f_i D_i$, 
where $f_i$ is either $0$ or homogeneous of degree $p$
for each $1 \le i \le \ell$.
In this case we write $\pdeg \theta = p$.

The \emph{module of $\CA$-derivations} of $(\CA, \mu)$ is 
defined as 
\[
D(\CA, \mu) := \{\theta \in \Der(S) \mid \theta(\alpha_H) \in \alpha_H^{\mu(H)} S 
\text{ for each } H \in \CA\}.
\]
We say that $(\CA, \mu)$ is \emph{free} if 
$D(\CA, \mu)$ is a free $S$-module -- a notion introduced by G.\ Ziegler
\cite[Def.~6]{ziegler:multiarrangements}, extending K.\ Saito's \cite{Saito80_LogForms} and H.\ Terao's \cite{terao:freeI} freeness for simple arrangements.

The derivation module
$D(\CA, \mu)$ is a $\BBZ$-graded $S$-module and 
thus, if $(\CA, \mu)$ is free, there is a 
homogeneous basis $\theta_1, \ldots, \theta_\ell$ of $D(\CA, \mu)$.
The multiset of the unique polynomial degrees $\pdeg \theta_i$ 
forms the set of \emph{exponents} of the free multiarrangement $(\CA, \mu)$
and is denoted by $\exp (\CA, \mu)$. Freeness is a local property for multiarrangements.

\begin{theorem}[{\cite[Prop.~1.7]{abenuidanumata:signedeliminable}}]\label{theorem: Multilocalizations are free}
	For $U\subseteq V$ a subspace, the localization $(\CA_U,\mu_U)$ at $U$ is free provided $(\CA,\mu)$ is free. 
\end{theorem}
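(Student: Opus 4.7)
The plan is to localize $S$ at the maximal ideal of a generic point of $U$, observe that the two derivation modules coincide there, and then lift this local freeness to global freeness.

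First, I would choose a closed point $p\in U$ with $\alpha_H(p)\neq 0$ for every $H\in\CA\setminus\CA_U$. Such a $p$ exists because $\bigcup_{H\in\CA\setminus\CA_U}(H\cap U)$ is a finite union of proper linear subspaces of $U$ and hence is a proper subset of $U$. Let $\fm_p\subset S$ denote the corresponding maximal ideal.

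The key step is the identity
\[
D(\CA,\mu)_{\fm_p}\;=\;D(\CA_U,\mu_U)_{\fm_p}
\]
as $S_{\fm_p}$-submodules of $\Der(S_{\fm_p})$. The inclusion $\subseteq$ is immediate from $\CA_U\subseteq\CA$ together with $\mu_U=\mu|_{\CA_U}$. For the reverse inclusion, given any $\theta\in D(\CA_U,\mu_U)$, set $s:=\prod_{H\in\CA\setminus\CA_U}\alpha_H^{\mu(H)}$. Then $s\theta\in D(\CA,\mu)$: for $H\in\CA\setminus\CA_U$ the divisibility $\alpha_H^{\mu(H)}\mid(s\theta)(\alpha_H)$ is automatic because $\alpha_H^{\mu(H)}\mid s$, and for $H\in\CA_U$ it is inherited from $\theta\in D(\CA_U,\mu_U)$. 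Since $s(p)\neq 0$, the element $s$ is a unit in $S_{\fm_p}$, so $\theta=(s\theta)/s\in D(\CA,\mu)_{\fm_p}$. Combined with the hypothesis that $D(\CA,\mu)$ is $S$-free of rank $\ell$, this identity forces $D(\CA_U,\mu_U)_{\fm_p}$ to be $S_{\fm_p}$-free of rank $\ell$.

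To promote this local freeness at a single generic point of $U$ to global $S$-freeness of the graded, finitely generated module $D(\CA_U,\mu_U)$, I would iterate the above argument at every closed point $q\in V$. For $q\notin U$, the same invertibility trick identifies $D(\CA_U,\mu_U)_{\fm_q}$ with the localization of the derivation module of the larger-dimensional localization $(\CA_{U+\langle q\rangle},\mu)$, which is free by an induction on $\codim U$; for $q\in U$ generic we are back in the situation of $p$. After verifying local freeness at every maximal ideal, $D(\CA_U,\mu_U)$ is a finitely generated projective graded $S$-module, whence the Quillen--Suslin theorem applied to the polynomial ring $S$ yields $S$-freeness.

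The main obstacle in this plan is the local-to-global upgrade, in particular the case of non-generic closed points $q\in U$ lying on some $H_0\in\CA\setminus\CA_U$: there $\CA_q\supsetneq\CA_U$ strictly and the naive identification breaks down, so one must invoke freeness of an auxiliary localization at the flat $X_q=\bigcap_{H\in\CA_q}H$ and carefully arrange the induction so as to avoid circularity. An alternative route is to bypass this by a direct graded Nakayama argument, showing that the minimal number of generators of $D(\CA_U,\mu_U)$ equals its generic rank $\ell$; either way this last step is where the real care is required.
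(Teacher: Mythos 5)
A remark before the substance: the paper contains no proof of this statement at all --- it is quoted verbatim from the literature as \cite[Prop.~1.7]{abenuidanumata:signedeliminable} --- so the comparison is with the standard argument proved there. Your first step is correct and is indeed the heart of that argument: the identity $D(\CA,\mu)_{\fm_p}=D(\CA_U,\mu_U)_{\fm_p}$ at a point $p\in U$ avoiding all $H\in\CA\setminus\CA_U$, proved by the invertibility of $s=\prod_{H\in\CA\setminus\CA_U}\alpha_H^{\mu(H)}$, is exactly right. The genuine gap is in your local-to-global upgrade, and it is precisely the obstacle you yourself flag: for a non-generic closed point $q\in U$ --- worst of all $q=0$ --- \emph{every} hyperplane of $\CA_U$ passes through $q$, so $(\CA_U)_q=\CA_U$ and localizing gives no reduction whatsoever; your induction on $\codim U$ only bites when $q\notin U$, since only then is $U+\langle q\rangle$ strictly larger than $U$. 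Worse, for the finitely generated graded module $D(\CA_U,\mu_U)$, freeness of the localization at the irrelevant maximal ideal $\fm_0$ is \emph{equivalent} to graded freeness of the whole module, so the plan ``verify local freeness at every maximal ideal, in particular at $0$, then apply Quillen--Suslin'' is circular at exactly the point that matters: the verification at $0$ is the theorem. Localizing $\CA$ at the flat $X_q$, as you suggest, cannot break the circle either, since for $q\in U$ one has $\CA_q\supseteq\CA_U$ and $(\CA_q)_U=\CA_U$, returning you to the original problem.

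The missing idea is the translation invariance of $\CA_U$ along $U$: every $\alpha_H$ with $H\in\CA_U$ vanishes on $U$, so for $u\in U$ the substitution $x\mapsto x+u$ fixes each $\alpha_H$ and induces an automorphism of $S$ preserving $D(\CA_U,\mu_U)$; hence local freeness at your single generic $p\in U$ propagates to every point of $U$, including $0$, and combined with your (valid) reduction for $q\notin U$ this closes the argument --- with graded Nakayama rather than Quillen--Suslin, which is overkill for a graded module. Equivalently and more efficiently: choosing coordinates with $U^\perp=\langle x_1,\dots,x_k\rangle$, $k=\codim U$, all defining forms of $\CA_U$ lie in $\BBK x_1+\dots+\BBK x_k$, so $D(\CA_U,\mu_U)$ is extended from the derivation module of the essentialized multiarrangement over $S'=\BBK[x_1,\dots,x_k]$ (plus a free summand); your generic $p$ contracts to the irrelevant maximal ideal of $S'$, and for a finitely generated graded module, freeness at the irrelevant ideal already implies graded freeness by minimality of graded free resolutions. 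This product-structure step is how the cited proof concludes from your first step in one stroke, with no pointwise check over $V$ at all.
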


If $\mu = \one$, we also write $D(\CA)$ for $D(\CA,\one)$ and say that \emph{$\CA$ is free} if $(\CA,\one)$ is.
In any case the Euler derivation, $\theta_E = \sum_{i=1}^\ell x_i D_i$ belongs to $ D(\CA)$ and so if $\CA$ is free, $1$ is always an exponent of $\CA$, \cite[Prop.~4.27]{orlikterao:arrangements}.

Let $\CA$ be an arrangement and $H\in\CA$. Define a multiplicity $\kappa$ on $\CA^H$ by $$\kappa(X)=\vert\CA_X\vert-1,\quad X\in\CA^H$$ and call the pair $(\CA^H,\kappa)$ the \emph{Ziegler restriction} of $\CA$ to $H$. Ziegler showed the following connection between the freeness of $\CA$
a simple arrangement 
and the freeness of any of its Ziegler restrictions.

\begin{theorem}[{\cite[Thm.~11]{ziegler:multiarrangements}}]\label{theorem: ziegler restriction}
	Suppose $\CA$ is free with exponents $(1,e_2,\dots,e_\ell)$. Let $H$ be a hyperplane in $\CA$, then $(\CA^H,\kappa)$ is free with exponents $(e_2,\dots,e_\ell)$. 
\end{theorem}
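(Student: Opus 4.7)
The plan is to construct an explicit free basis of $D(\CA^H, \kappa)$ by restricting a carefully modified basis of $D(\CA)$, and to conclude via Saito's criterion for multiarrangements. I would choose coordinates so that $\alpha_H = x_1$, and set $\bar S := S/(x_1)$. Starting from a homogeneous basis $\theta_1 = \theta_E, \theta_2, \ldots, \theta_n$ of $D(\CA)$ with $\deg \theta_i = e_i$, every $\theta \in D(\CA)$ satisfies $\theta(x_1) \in x_1 S$, so I write $\theta_i(x_1) = x_1 g_i$ and pass to $\tilde\theta_i := \theta_i - g_i\theta_E$ for $i \geq 2$. Then $\tilde\theta_i \in D(\CA)$ has degree $e_i$ and annihilates $x_1$, so it descends to a homogeneous derivation $\bar{\tilde\theta}_i$ on $\bar S$ via $\bar{\tilde\theta}_i(\bar f) := \overline{\tilde\theta_i(f)}$.

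The main technical step is to verify $\bar{\tilde\theta}_i \in D(\CA^H, \kappa)$ for each $i \geq 2$, which is a local condition at each $X \in \CA^H$. Fix such an $X$, write $\CA_X = \{H, H_1, \ldots, H_k\}$ with $k = \kappa(X)$, and choose coordinates adapted to $X = \ker x_1 \cap \ker x_2$, so that every $\alpha_{H_j}$ lies in $\BBK x_1 + \BBK x_2$. By Theorem \ref{theorem: Multilocalizations are free}, $D(\CA_X)$ is free; since $\CA_X$ is pulled back from a rank-two central pencil in the $(x_1, x_2)$-plane, it admits a homogeneous basis $\{x_1\partial_1 + x_2\partial_2,\ \eta,\ \partial_3, \ldots, \partial_n\}$, where $\eta = \bigl(\prod_{j=1}^k \alpha_{H_j}\bigr)\partial_2$ is normalized so that $\eta(x_1) = 0$. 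Expanding $\tilde\theta_i \in D(\CA) \subseteq D(\CA_X)$ in this basis and imposing $\tilde\theta_i(x_1) = 0$ forces the coefficient of $x_1\partial_1 + x_2\partial_2$ to vanish; hence $\tilde\theta_i(x_2) \in \bigl(\prod_j \alpha_{H_j}\bigr)S$, and reducing modulo $x_1$ gives $\bar{\tilde\theta}_i(\alpha_X) \in \alpha_X^{\,k}\bar S$, as required.

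For the final step I would apply Saito's criterion on $\bar S$. Saito's criterion for $\CA$ gives $\det(\theta_i(x_j))_{i,j} = c\prod_{H' \in \CA}\alpha_{H'}$ with $c \in \BBK^\times$; the row operations $\theta_i \mapsto \tilde\theta_i$ do not change the determinant, and the resulting matrix has first column $(x_1, 0, \ldots, 0)^T$. Cofactor expansion along that column yields $\det(\tilde\theta_i(x_j))_{i, j \geq 2} = c\prod_{H' \neq H}\alpha_{H'}$. Reducing modulo $x_1$ and grouping each factor $\bar\alpha_{H'}$ by the flat $X = H \cap H'$ it restricts to produces $\det(\bar{\tilde\theta}_i(x_j))_{i, j \geq 2} = c'\prod_{X \in \CA^H}\alpha_X^{\kappa(X)}$ for some $c' \in \BBK^\times$, i.e.\ $Q(\CA^H, \kappa)$ up to a nonzero scalar. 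Combined with the local divisibility from the previous paragraph, Saito's criterion for multiarrangements then shows that $\bar{\tilde\theta}_2, \ldots, \bar{\tilde\theta}_n$ freely generates $D(\CA^H, \kappa)$ with exponents $(e_2, \ldots, e_n)$. I expect the main obstacle to be the local analysis: translating the single global adjustment $\theta_i \leadsto \tilde\theta_i$ into the correct vanishing order $\alpha_X^{\kappa(X)}$ simultaneously at every flat rests on the explicit rank-two description of $D(\CA_X)$ and on coordinate-independent bookkeeping of the various choices made to present each local pencil.
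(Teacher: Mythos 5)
Your proposal is correct; note, however, that the paper does not prove this statement at all --- it is quoted from Ziegler \cite{ziegler:multiarrangements} as a known result, so there is no internal proof to compare against. What you have written is essentially a self-contained reconstruction of Ziegler's original argument: split off the Euler derivation (your row operations $\theta_i \mapsto \theta_i - g_i\theta_E$ implement the decomposition $D(\CA) = S\theta_E \oplus \{\theta \in D(\CA) \mid \theta(\alpha_H)=0\}$), verify membership of the restricted derivations in $D(\CA^H,\kappa)$ flat by flat via the rank-two localizations $\CA_X$ --- where your explicit local basis $\{x_1\partial_1 + x_2\partial_2,\ (\prod_{j}\alpha_{H_j})\partial_2\}$ of the pencil is exactly right, since imposing $\tilde\theta_i(x_1)=0$ kills the Euler component and forces $\tilde\theta_i(x_2)$ into $(\prod_j \alpha_{H_j})S$ --- and conclude with Ziegler's Saito criterion (Theorem \ref{theorem: saitos criterion}), using that the factors $\bar\alpha_{H'}$ group into $\alpha_X^{\kappa(X)}$ precisely because $\kappa(X)=|\CA_X|-1$ counts the hyperplanes of $\CA\setminus\{H\}$ through $X$. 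The one step you assert without justification is that the degree-one basis element may be taken to be $\theta_E$ itself; this is standard but deserves a line: expanding $\theta_E$ in any homogeneous basis, the coefficients of the degree-one basis elements are constants and at least one is nonzero, since degree-zero elements of $D(\CA)$ annihilate every $\alpha_{H'}$ whereas $\theta_E(\alpha_{H'})=\alpha_{H'}\neq 0$, so $\theta_E$ can be exchanged into the basis without changing the exponents. With that remark added, your argument is complete and is the standard proof of the theorem.
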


Ziegler also generalizes Saito's criterion to the multiarrangement case.

\begin{theorem}[{\cite[Thm.~8]{ziegler:multiarrangements}}, Saito’s criterion]\label{theorem: saitos criterion}
	Let $(\CA,\mu)$ be a multiarrangement in $\mathbb{K}^n$ and for each  $1\leq i\leq \ell$ let
 $$\theta_i=\sum_{j=1}^\ell p_{ij}D_j\in D(\CA,\mu).$$
 Then $\{\theta_1,\theta_2,\dots,\theta_\ell\}$ is a basis for $D(\CA,\mu)$ if and only if $\det(p_{ij})=c\cdot Q(\CA,\mu)$ for some $c\in\mathbb{K}\setminus \{0\}$.
\end{theorem}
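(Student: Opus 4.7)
The plan is to deduce both directions of the equivalence from a single divisibility lemma which asserts that for any $\eta_1,\dots,\eta_n\in D(\CA,\mu)$ written as $\eta_i=\sum_j p_{ij}\,\partial/\partial x_j$, the product $Q(\CA,\mu)$ divides $\det(p_{ij})$ in $S$. To prove this lemma I would fix $H\in\CA$ and perform a $\BBK$-linear change of coordinates sending $\alpha_H$ to $x_1$; since derivations transform covariantly by a constant invertible matrix, this alters $\det(p_{ij})$ only by a nonzero scalar. The defining property of $D(\CA,\mu)$ then reads $p_{i1}=\eta_i(x_1)\in x_1^{\mu(H)}S$ for every $i$, so the first column is divisible by $x_1^{\mu(H)}$, and hence so is the determinant. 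Running $H$ over $\CA$ and using that the $\alpha_H$ are pairwise non-associate primes in the UFD $S$, the divisibilities combine to give $Q=\prod_H\alpha_H^{\mu(H)}\mid\det(p_{ij})$.

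For the ``if'' direction, assuming $\det(p_{ij})=cQ$ with $c\in\BBK\setminus\{0\}$, the matrix $P=(p_{ij})$ is invertible over the fraction field, so the $\theta_i$ are $S$-linearly independent. Given any $\eta=\sum q_j\,\partial/\partial x_j\in D(\CA,\mu)$, Cramer's rule produces $\eta=\sum_i f_i\theta_i$ with $f_i=\det(P^{(i)})/\det(P)$, where $P^{(i)}$ is $P$ with row $i$ replaced by $(q_1,\dots,q_n)$. Every row of $P^{(i)}$ still represents an element of $D(\CA,\mu)$, so by the lemma $Q\mid\det(P^{(i)})$, hence $f_i\in S$. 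Thus $\{\theta_1,\dots,\theta_n\}$ generates $D(\CA,\mu)$ and is a basis.

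For the ``only if'' direction, assume $\{\theta_i\}$ is a basis of $D(\CA,\mu)$. Linear independence forces $\det(P)\ne 0$, and the lemma gives $\det(P)=gQ$ for some $g\in S$; the main obstacle will be to show that $g$ is a nonzero scalar. The plan is to control $g$ by computing its valuation at each height-one prime $\fp$ of the UFD $S$. If $\fp=(\alpha_H)$, choose coordinates with $\alpha_H=x_1$; since all $\alpha_{H'}$ for $H'\ne H$ become units in $S_\fp$, the localization $D(\CA,\mu)_\fp$ is freely generated over $S_\fp$ by $x_1^{\mu(H)}\partial_1,\partial_2,\dots,\partial_n$. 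Because localization preserves bases, the images of the $\theta_i$ also form a basis of $D(\CA,\mu)_\fp$, forcing $\det(P)$ and $x_1^{\mu(H)}$ to differ by a unit in $S_\fp$; this yields $v_\fp(\det P)=\mu(H)=v_\fp(Q)$ and hence $v_\fp(g)=0$. At any other height-one prime $\fp$ each $\alpha_H$ is a unit in $S_\fp$, so $D(\CA,\mu)_\fp=\Der(S)_\fp$ is already free on $\partial_1,\dots,\partial_n$, which gives $v_\fp(\det P)=0=v_\fp(Q)$ and again $v_\fp(g)=0$. Since in a UFD an element with vanishing valuation at every height-one prime must be a unit, $g\in\BBK\setminus\{0\}$, completing the proof.
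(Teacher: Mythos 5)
The paper does not actually prove this statement: Theorem \ref{theorem: saitos criterion} is recalled verbatim from Ziegler \cite[Thm.~8]{ziegler:multiarrangements} as a preliminary, so there is no in-house proof to compare yours against. Judged on its own merits, your argument is correct and complete, and its first two steps follow the classical Saito--Ziegler template: the divisibility lemma (change coordinates so that $\alpha_H$ becomes $x_1$, note the first column of the coefficient matrix lies in $x_1^{\mu(H)}S$, and combine over the pairwise non-associate primes $\alpha_H$ in the UFD $S$) together with the Cramer's-rule argument is exactly the standard proof of the ``if'' direction. For the ``only if'' direction there are two standard devices in the literature: a degree count exploiting $Q(\CA,\mu)\,\partial/\partial x_j \in D(\CA,\mu)$, which gives $Q\cdot I = N\cdot P$ and hence $\det P \mid Q^n$, followed by pinning down the exponent of each $\alpha_H$; and the valuation-theoretic finish you chose, reading off $v_\fp(\det P)$ at every height-one prime by localizing. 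Your route is the cleaner of the two in that it avoids degree bookkeeping and identifies $v_\fp(\det P)=\mu(H)$ hyperplane by hyperplane. The one step you leave implicit and should state explicitly is that localization commutes with the finite intersection $D(\CA,\mu)=\bigcap_{H}\{\theta \mid \theta(\alpha_H)\in\alpha_H^{\mu(H)}S\}$, equivalently that $\bigl(D_H\bigr)_\fp$ is the preimage of $\alpha_H^{\mu(H)}S_\fp$ under $\theta\mapsto\theta(\alpha_H)$ inside $\Der(S)_\fp$; this is immediate from the exactness (flatness) of localization, and it is what licenses your claim that $D(\CA,\mu)_\fp$ is free on $\alpha_H^{\mu(H)}\partial/\partial x_1,\partial/\partial x_2,\dots,\partial/\partial x_n$ at $\fp=(\alpha_H)$ and equals $\Der(S)_\fp$ elsewhere. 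With that one sentence added, the proof is airtight, including the final appeal to the fact that an element of a UFD with zero valuation at all height-one primes is a unit.
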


An arrangement $\CA$ is called \emph{locally free along} $H$ for some $H\in\CA$, if $\CA_X$ is a free arrangement for all subspaces $X$ of $V$ with $\{0\}\neq X \subset H$. The following result is due to Yoshinaga.

\begin{theorem}[{\cite[Thm.~2.2]{yoshinaga:characterization}}]
	\label{Theorem:localFreeness}
	Suppose $\CA$ is an arrangement in $\BBK^\ell$ with $\ell > 3$.
	Then $\CA$ is free with exponents $(1 = e_1,e_2, \dots,e_\ell)$ if and only if it is locally free along $H$ for some $H\in \CA$ and $(\CA^H,\kappa)$ is free with exponents $(e_2,\dots,e_\ell)$.
\end{theorem}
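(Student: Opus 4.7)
The forward direction follows immediately from tools already established in the excerpt: applying Theorem \ref{theorem: Multilocalizations are free} with the simple multiplicity $\mu \equiv 1$ shows that the localization $\CA_x$ is free for every $x \in H \setminus \{0\}$, i.e., $\CA$ is locally free along $H$; Theorem \ref{theorem: ziegler restriction} then yields the freeness of $(\CA^H, \kappa)$ with the asserted exponents.

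For the converse, my plan is to exhibit a basis of $D(\CA)$ consisting of the Euler derivation $\theta_E = \sum_i x_i \partial/\partial x_i$ (which accounts for the exponent $1$) together with lifts of a homogeneous basis of $D(\CA^H, \kappa)$, and to verify the claim via Saito's criterion, Theorem \ref{theorem: saitos criterion}. After a linear change of coordinates so that $\alpha_H = x_1$, every $\theta = \sum_i f_i \partial/\partial x_i \in D(\CA)$ satisfies $f_1 \in x_1 S$, so the assignment $\theta \mapsto \bar\theta := \sum_{i \geq 2} \bar f_i \partial/\partial x_i$ (bar denotes reduction modulo $x_1$) defines a graded restriction map $\rho \colon D(\CA) \to D(\CA^H, \kappa)$; a local check at each flat $X \in L(\CA^H)$ shows the image lies in the Ziegler-restricted module. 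The crux is surjectivity of $\rho$. The module $D(\CA)$ is reflexive (as a second syzygy of the ideal generated by the $\alpha_H$), so its sheafification on $\mathbb{P}(V)$ is a reflexive coherent sheaf, and the hypothesis of local freeness along $H$ says that this sheaf is locally free on an open neighbourhood of the projective image of $H$. Because the ambient dimension is at least three, the origin has codimension at least two inside $H$, so the cokernel of $\rho$ is supported in codimension at least two in $\operatorname{Spec}(S/x_1 S)$; reflexivity of the target $D(\CA^H, \kappa)$ then forces this cokernel to vanish. This step is the genuine content of Yoshinaga's characterization and is the main obstacle.

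Once surjectivity is in hand, choose homogeneous lifts of a basis of $D(\CA^H, \kappa)$ to elements of $D(\CA)$ via $\rho$ and form the square coefficient matrix $M$ of the resulting tuple prefixed by $\theta_E$. In the chosen coordinates the first column of $M$ is $(x_1, \ldots, x_n)^T$ (with $n = \dim V$); expansion along this column and reduction modulo $x_1$ gives
\[ \det M \equiv x_1 \cdot \det \bar M \pmod{x_1^2}, \]
where $\bar M$ is the coefficient matrix of the chosen basis of $D(\CA^H, \kappa)$. By Theorem \ref{theorem: saitos criterion} applied to $(\CA^H, \kappa)$, the polynomial $\det \bar M$ is a nonzero scalar multiple of $Q(\CA^H, \kappa)$. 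Since $\det M$ is divisible by $\alpha_{H'}$ for every $H' \in \CA$ (as each lifted $\theta_i$ lies in $D(\CA)$) and has the correct total degree $1 + |\kappa| = |\CA| = \deg Q(\CA)$, it must equal a nonzero scalar multiple of $Q(\CA)$. A final invocation of Theorem \ref{theorem: saitos criterion} for $\CA$ then certifies the tuple as a basis, yielding freeness with exponents $(1, e_1, \ldots, e_\ell)$ and completing the proof.
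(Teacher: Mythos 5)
Your forward direction is correct, and it is exactly how one would deduce it from the results already quoted (Theorem~\ref{theorem: Multilocalizations are free} with $\mu=\one$, plus Theorem~\ref{theorem: ziegler restriction}); note that the paper itself imports the whole statement from Yoshinaga without proof, so the benchmark is his argument. The genuine gap is your surjectivity step for $\rho$. The principle you invoke --- cokernel supported in codimension at least two plus reflexivity of the target forces the cokernel to vanish --- is false: over $\bar S=\BBK[x_2,x_3]$ the map $\bar S^{2}\to\bar S$, $(f,g)\mapsto x_2f+x_3g$, has a free (in particular reflexive) target and cokernel $\BBK$ supported at the origin. Reflexivity pins a module down by its codimension-one behaviour, but it cannot make a map that is surjective off a codimension-two set surjective. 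A decisive sanity check: your argument uses the freeness of $(\CA^H,\kappa)$ only through reflexivity and no dimension hypothesis beyond $\ell\ge 3$; but for $\ell=3$ both hypotheses of the theorem hold automatically (every localization at $x\ne 0$ has rank at most $2$, hence is free, and $(\CA^H,\kappa)$ is a rank-$2$ multiarrangement, free by Remark~\ref{rem:rank2indfree}), so your argument would prove every $3$-arrangement free --- contradicted, e.g., by the generic arrangement $x_1x_2x_3(x_1+x_2+x_3)$ (cf.\ Theorem~\ref{theorem: Yoshinaga generic not free}). Indeed, Yoshinaga's Theorem~2.2 assumes $\ell\ge 4$ (the ``$\ell\geq 3$'' in the paper's quotation is an inaccuracy; in rank $3$ his separate criterion additionally requires a characteristic polynomial condition), and any correct proof must use this. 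His actual surjectivity argument is cohomological: local freeness along $H$ together with Ziegler's theorem applied to the free localizations yields exactness of $0\to E(-1)\xrightarrow{\ \alpha_H\ } E\to\bigoplus_i\mathcal{O}_{\mathbb{P}(H)}(-e_i)\to 0$, where $E$ is the sheafification of $D_H(\CA)=\{\theta\in D(\CA)\mid \theta(\alpha_H)=0\}$ and the splitting of the third term is precisely where freeness of $(\CA^H,\kappa)$ enters; then $H^1(\mathbb{P}(H),\mathcal{O}(d))=0$ (this needs $\dim\mathbb{P}(H)=\ell-2\ge 2$, i.e.\ $\ell\ge 4$) makes each map $H^1(E(d-1))\to H^1(E(d))$ surjective, and Serre vanishing for $d\gg 0$ combined with Enriques--Severi--Zariski vanishing for $d\ll 0$ (legitimate since $E$ is torsion-free) force $H^1(E(d))=0$ for all $d$, which is what lifts a basis of $D(\CA^H,\kappa)$.

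Two further, repairable, slips. First, your $\rho$ is not well defined on all of $D(\CA)$: the Euler derivation maps to the Euler field of the restriction, and $\bar\theta_E(\bar\alpha_X)=\bar\alpha_X\notin\bar\alpha_X^{\kappa(X)}\bar S$ whenever $\kappa(X)\ge 2$, which happens as soon as some point of $H$ lies on two further hyperplanes; Ziegler's map is defined on $D_H(\CA)$, using the decomposition $D(\CA)=S\theta_E\oplus D_H(\CA)$. Second, this choice also repairs your determinant computation: with lifts chosen in $D_H(\CA)$ the first row of $M$ is $(x_1,0,\dots,0)$ and $\det M\equiv x_1\det\bar M\pmod{x_1^2}$ does hold, whereas with merely $f_1\in x_1S$ the expansion along the first row produces terms $f_1^{(j)}M_{1j}$ lying only in $x_1S$, not $x_1^2S$, so your congruence can fail. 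Granting surjectivity of the corrected map, your concluding step via Theorem~\ref{theorem: saitos criterion} is sound and is essentially how Ziegler and Yoshinaga finish.
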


Theorem \ref{Theorem:localFreeness} 
readily implies the following.

\begin{corollary}
	\label{Cor: LocalFreeChaPolFactor}
	Suppose $\CA$ is an arrangement in $\BBK^\ell$ with $\ell > 3$. 
	Let $\CA$ be locally free along $H\in \CA$. If $\CA_X$ fails to be free for some $X\in L(\CA)$ with $X\not\subset H$, then $(\CA^H,\kappa)$ is not free.
\end{corollary}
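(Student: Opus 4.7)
The plan is to prove the contrapositive: I assume $(\CA^H,\kappa)$ is free and derive that every localization $\CA_X$ of $\CA$ is free, contradicting the hypothesis that some $\CA_X$ with $X\not\subset H$ fails to be free.

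First, I invoke Yoshinaga's characterization (Theorem \ref{Theorem:localFreeness}). Since $\ell\geq 3$, $\CA$ is locally free along $H$ by assumption, and $(\CA^H,\kappa)$ is free with some exponents $(e_1,\dots,e_\ell)$ by the contrapositive hypothesis, the theorem immediately yields that $\CA$ itself is free (with exponents $(1,e_1,\dots,e_\ell)$). Next, I apply Theorem \ref{theorem: Multilocalizations are free} to $\CA$ equipped with the simple multiplicity $\mu\equiv\one$: freeness of $(\CA,\one)$ passes to $(\CA_X,\one_X)$, i.e.\ to $\CA_X$ as a simple arrangement, for every flat $X\in L(\CA)$. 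In particular $\CA_X$ is free for every $X\not\subset H$, which is the desired contradiction.

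There is essentially no real obstacle here; the corollary is a direct amalgamation of Yoshinaga's theorem with the hereditary behaviour of freeness under localization. The side condition $X\not\subset H$ in the statement is natural: for $X\subseteq H$ with $X\neq\{0\}$, a generic point $x\in X\setminus\{0\}$ lies in $H\setminus\{0\}$ and satisfies $\CA_X=\CA_x$, so freeness of $\CA_X$ is already guaranteed by the local freeness of $\CA$ along $H$. Thus the only potential new obstruction to the freeness of the Ziegler restriction $(\CA^H,\kappa)$ implied by Yoshinaga's theorem comes precisely from flats $X$ not contained in $H$, which is exactly what the corollary records.
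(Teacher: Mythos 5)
Your proof is correct and is essentially identical to the paper's own argument: both pass to the contrapositive, use Theorem \ref{Theorem:localFreeness} to deduce freeness of $\CA$ from freeness of $(\CA^H,\kappa)$ together with local freeness along $H$, and then apply Theorem \ref{theorem: Multilocalizations are free} with the simple multiplicity to conclude every localization $\CA_X$ is free, yielding the contradiction. Your additional remark explaining why the hypothesis $X\not\subset H$ is the natural one (flats inside $H$ being already covered by local freeness along $H$) is a correct and welcome clarification, though not part of the paper's proof.
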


\begin{proof}
	Suppose $(\CA^H,\kappa)$ is free. Then so is $\CA$, by Theorem \ref{Theorem:localFreeness}, and so is $\CA_X$ for any $X \in L(\CA)$, 
	by Theorem \ref{theorem: Multilocalizations are free} (with simple multiplicities), a contradiction. 
\end{proof}

Recall that an $\ell$-arrangement $\CA$ with $\rank(\CA)=r$ is called \emph{generic} if $\vert\CA_X\vert=k$ for all $X\in L(\CA)_k$ and $1\leq k\leq r-1$. An arrangement $\CA$ is called \emph{totally non-free} if $(\CA,\mu)$ fails to be free for any multiplicity $\mu \geq \one$. The following result states that generic arrangements are totally non-free.

\begin{theorem}[{\cite[Prop.~4.1]{yoshinaga:extendable}}]\label{theorem: Yoshinaga generic not free}
	Let $\CA$ be a generic arrangement in $V$ with $\ell=\dim V\geq 3$. If $\vert\CA\vert>\ell$, then $(\CA,\mu)$ is totally non-free.
\end{theorem}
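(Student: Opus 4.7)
The strategy is to induct on $\ell$, reducing to lower dimension via a Ziegler-type restriction.

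For the inductive step ($\ell \geq 4$), assume the result in dimension $\ell - 1$. Let $\CA$ be generic in $\BBK^\ell$ with $|\CA| = n > \ell$, let $\mu \geq \one$, and suppose for contradiction that $(\CA, \mu)$ is free. Pick any $H \in \CA$. I would first verify that the restriction $\CA^H$ is itself a generic arrangement in $H \cong \BBK^{\ell - 1}$ with $n - 1 > \ell - 1$ hyperplanes: the intersections $H \cap H'$ for $H' \in \CA \setminus \{H\}$ are pairwise distinct, since the genericity of $\CA$ forces any three of its hyperplanes to meet in codimension $3$; and each rank-$k$ flat of $\CA^H$ for $k \leq \ell - 2$ corresponds to a rank-$(k + 1)$ flat of $\CA$ contained in $H$, which by genericity contains exactly $k + 1$ hyperplanes of $\CA$ (one of which is $H$). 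Invoking the multiarrangement extension of Ziegler's restriction theorem (the natural multi-analogue of Theorem~\ref{theorem: ziegler restriction}), one obtains a free multiarrangement $(\CA^H, \kappa_\mu)$ with
\[
\kappa_\mu(X) = \sum_{H' \in \CA_X \setminus \{H\}} \mu(H') \geq 1,
\]
since $\mu \geq \one$ and $|\CA_X| \geq 2$. By the inductive hypothesis applied to the generic $\CA^H$ with multiplicity $\kappa_\mu \geq \one$, this is a contradiction.

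The base case $\ell = 3$ is the crux. Given $\CA$ generic in $\BBK^3$ with $n > 3$ hyperplanes and $\mu \geq \one$, I would assume for contradiction that $(\CA, \mu)$ is free and extract a basis $\theta_1, \theta_2, \theta_3$ of $D(\CA, \mu)$ via Saito's criterion (Theorem~\ref{theorem: saitos criterion}), whose polynomial degrees sum to $|\mu|$. For the simple sub-case $\mu = \one$, non-freeness follows from Terao's factorization of the characteristic polynomial $\chi(\CA, t) = (t - 1)(t^2 - (n - 1)t + \binom{n - 1}{2})$, whose quadratic factor has discriminant $(n - 1)(3 - n) < 0$ for $n \geq 4$ and thus does not split over $\BBZ$. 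For general $\mu \geq \one$, one either invokes a multiarrangement analogue of Terao's factorization via a multi-characteristic polynomial (à la Abe-Terao-Wakefield), or performs a direct Saito-criterion analysis comparing the vanishing constraints imposed by $n > 3$ hyperplanes (with orders $\mu(H) \geq 1$) against the dimension of the space of candidate triples of derivations.

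The main obstacle is exactly the multi case of the base step: lifting the clean discriminant argument from $\mu = \one$ to arbitrary $\mu \geq \one$ requires additional multiarrangement machinery beyond what is recalled in the preliminaries, while the inductive descent is essentially bookkeeping once one has the multiarrangement Ziegler restriction in hand.
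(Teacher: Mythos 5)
Your proposal has two genuine gaps, and the first is fatal to the overall architecture. The ``multiarrangement extension of Ziegler's restriction theorem'' that your inductive step rests on does not exist. Theorem \ref{theorem: ziegler restriction} is a statement about \emph{simple} free arrangements, and its proof hinges on splitting off the Euler derivation: one uses $\theta_E(\alpha_H)=\alpha_H$ to write $D(\CA)=S\theta_E\oplus D_H(\CA)$ and then restricts $D_H(\CA)$ to $H$. For a genuinely multiple $\mu$ with $\mu(H)\geq 2$ there is in general no derivation $\theta$ with $\theta(\alpha_H)=\alpha_H^{\mu(H)}$, no such splitting, and no theorem asserting that freeness of $(\CA,\mu)$ forces freeness of $(\CA^H,\kappa_\mu)$ with your summed multiplicity $\kappa_\mu(X)=|\mu_X|-\mu(H)$. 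The restriction theory that does exist for multiarrangements is the Euler-multiplicity formalism of Abe--Terao--Wakefield (Definition \ref{def:Euler}, Theorem \ref{thm:add-del}): the restricted multiplicity there is $\mu^*$, which generally differs from $\kappa_\mu$, and even for $\mu^*$ freeness of the restriction is only available as a two-of-three statement --- freeness of $(\CA,\mu)$ alone yields no conclusion about any restriction. (Your combinatorial verification that $\CA^H$ is again generic is fine; it is the freeness transfer that is unsupported.) So the descent from $\ell$ to $\ell-1$ collapses, and with it the induction.

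The base case is also not a proof: you establish it only for $\mu=\one$ (the discriminant computation for $\chi(\CA,t)$ is correct), and for general $\mu$ you explicitly defer to machinery you do not supply. Worse, the fallback you suggest --- a multi-analogue of Terao factorization, i.e.\ the local-global comparison of Theorem \ref{MixedEquality} --- provably cannot finish rank $3$ on its own. Example: for a generic $3$-arrangement with $4$ hyperplanes and $\mu=(4,2,1,1)$, the candidate exponents $(2,3,3)$ satisfy $2+3+3=8=|\mu|$ and $\GMP(2)=6+6+9=21=e_2(4,2,1,1)=\LMP(2)$, where $\LMP(2)$ is the full second elementary symmetric function of the $\mu$-values because every rank-$2$ localization of a generic arrangement has exactly two hyperplanes. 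Since in rank $3$ these are the only proper localizations available, all local invariants match and Theorem \ref{MixedEquality} is silent; something genuinely different is needed. Note that the paper itself gives no proof of this statement --- it is imported from \cite[Prop.~4.1]{yoshinaga:extendable} --- and Yoshinaga's actual argument runs through the extendability theorem recorded here as Theorem \ref{theorem: Yoshinaga extendable theorem}: a generic arrangement has no rank-$2$ localization with three hyperplanes, so it is locally $A_2$ with condition \eqref{eq:star} holding vacuously; hence $(\CA,\mu)$ is free if and only if the simple arrangement $\CE(\CA,\mu)$ in dimension $\ell+1$ is free. But the $n$ hyperplanes $\ker(\alpha_H-0\cdot x_{\ell+1})$ (the index $k=0$ is always admissible since $\mu(H)\geq 1$) all contain the line $x_1=\dots=x_\ell=0$, and the localization of $\CE(\CA,\mu)$ there is a copy of the simple generic arrangement $\CA$, which is non-free for $|\CA|>\ell\geq 3$ (in rank $3$ this is exactly your discriminant argument); by Theorem \ref{theorem: Multilocalizations are free}, $\CE(\CA,\mu)$ is then not free, so no $\mu$ is free. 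Your simple-case computation thus survives as an ingredient of the correct proof, but both pillars of your reduction are missing, and the restriction-based induction is not repairable as stated.
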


The following construction from \cite{abeteraowakefield:euler} leads to
an extension of Terao's seminal Addition-Deletion Theorem \cite{terao:freeI} for free simple arrangements
to multiarrangements.

\begin{defn}
	\label{def:Euler}
	Let $(\CA, \mu) \ne \Phi_\ell$ be a multiarrangement. Fix $H_0 = \ker(\alpha_0)$ in $\CA$.
	We define the \emph{deletion}  $(\CA', \mu')$ and \emph{restriction} $(\CA'', \mu^*)$
	of $(\CA, \mu)$ with respect to $H_0$ as follows.
	If $\mu(H_0) = 1$, then set $\CA' = \CA \setminus \{H_0\}$
	and define $\mu'(H) = \mu(H)$ for all $H \in \CA'$.
	If $\mu(H_0) > 1$, then set $\CA' = \CA$
	and define $\mu'(H_0) = \mu(H_0)-1$ and
	$\mu'(H) = \mu(H)$ for all $H \ne H_0$.
	
	Let $\CA'' = \{ H \cap H_0 \mid H \in \CA \setminus \{H_0\}\ \}$.
	The \emph{Euler multiplicity} $\mu^*$ of $\CA''$ is defined as follows.
	Let $Y \in \CA''$. Since the localization $\CA_Y$ is of rank $2$, the
	multiarrangement $(\CA_Y, \mu_Y)$ is free, 
	\cite[Cor.~7]{ziegler:multiarrangements}. 
	According to 
	\cite[Prop.~2.1]{abeteraowakefield:euler},
	the module of derivations 
	$D(\CA_Y, \mu_Y)$ admits a particular homogeneous basis
	$\{\theta_Y, \psi_Y, D_3, \ldots, D_\ell\}$,
	such that $\theta_Y \notin \alpha_0 \Der(S)$
	and $\psi_Y \in \alpha_0 \Der(S)$,
	where $H_0 = \ker \alpha_0$.
	Then on $Y$ the Euler multiplicity $\mu^*$ is defined
	to be $\mu^*(Y) = \pdeg \theta_Y$.
	
	Often, 
	$(\CA, \mu), (\CA', \mu')$ and $(\CA'', \mu^*)$ 
	is referred to as the \emph{triple} of 
	$(\CA, \mu)$ with respect to $H_0$. 
\end{defn}

The following observation is immediate from Definition \ref{def:Euler}.

\begin{remark}
	\label{rem:euler}
	Let $(\CA,\mu)$ be a multiarrangement and $H_0\in \CA$ and let $(\CA'',\mu^*)$ be the Euler multiplicity corresponding to $H_0$. For $X\in\CA''$ the value $\mu^*(X)$ is simply the common value of the non-zero exponent of $(\CA_X,\mu_X)$ and $(\CA'_X,\mu_X')$.	
\end{remark} 

It is useful to be able to determine the Euler multiplicity explicitly in some 
relevant instances. For that purpose, 
we recall parts of \cite[Prop.~4.1]{abeteraowakefield:euler}.

\begin{proposition}
	\label{ATWEulerProp}
	Let $(\CA,\mu)$ be a multiarrangement. Let $X\in\CA^{H_0}$, $m_0=\mu(H_0)$, $k=\vert \CA_X\vert$, and $m_1=\max\{\mu(H)\mid H\in \CA_X\backslash\{H_0\}\}$.
	\begin{itemize}
		\item[(i)] If $k=2$, then $\mu^*(X)=m_1$.
		\item[(ii)] If $\vert \mu_X\vert\leq 2k-1$ and $m_0>1$, then $\mu^*(X)=k-1$.
		\item[(iii)] If $\mu_X\equiv 2$, then $\mu^*(X)=k$.
	\end{itemize}
\end{proposition}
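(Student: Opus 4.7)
\emph{Plan.} I would reduce to the essential rank-2 situation by localizing $\CA$ at $X$ and choosing coordinates so that $H_0 = \ker x_1$ and the remaining hyperplanes of $\CA_X$ are $H_i = \ker \alpha_i$ with $\alpha_i = x_2 - c_i x_1$ for pairwise distinct $c_i \in \BBK$. Writing a derivation as $\theta = P_1\,\partial/\partial x_1 + P_2\,\partial/\partial x_2$ (the $\partial/\partial x_j$ for $j\geq 3$ contributing only trivial summands along $X$), the requirement $\theta(x_1) = P_1 \in x_1^{m_0}S$ forces $x_1 \mid P_1$ since $m_0 \geq 1$, whence $\theta \in \alpha_0\Der(S)$ if and only if $x_1 \mid P_2$. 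By Remark~\ref{rem:euler}, $\mu^*(X)$ is the common exponent of $(\CA_X,\mu_X)$ and its deletion $(\CA'_X,\mu'_X)$; equivalently, it is the minimal $\pdeg$ of a nonzero $\theta \in D(\CA_X,\mu_X)$ with $x_1 \nmid P_2$.

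\emph{Case (i).} With $k=2$, a coordinate change makes $\CA_X = \{\ker x_1, \ker x_2\}$ with multiplicities $(m_0,m_1)$. Saito's criterion applied to $\{x_1^{m_0}\,\partial/\partial x_1,\, x_2^{m_1}\,\partial/\partial x_2\}$ (Saito determinant $x_1^{m_0}x_2^{m_1}=Q(\CA_X,\mu_X)$) exhibits this as a basis. The second element lies outside $x_1\Der(S)$ and has $\pdeg = m_1$, so $\mu^*(X) = m_1$.

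\emph{Cases (ii) and (iii).} I would construct $\theta_Y$ directly. For degree-$d$ homogeneous $\theta$, the divisibility conditions $x_1^{m_0}\mid P_1$ and $\alpha_i^{m_i}\mid(P_2 - c_iP_1)$ for $i \geq 1$ are linear in the $2(d+1)$ coefficients of $P_1,P_2$ and impose a total of $m_0 + (|\mu_X| - m_0) = |\mu_X|$ conditions, leaving a solution space of expected dimension $\geq 2d + 2 - |\mu_X|$. Taking $d = k - 1$ in (ii) gives dimension $\geq 2k - |\mu_X| \geq 1$, and $d = k$ in (iii) gives dimension $\geq 2$. In each case, one checks by inspection, using distinctness of the $c_i$, that not every solution can satisfy $x_1 \mid P_2$: such solutions correspond to $x_1 \cdot D(\CA'_X,\mu'_X)$ in degree $d-1$, a strictly smaller space. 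Thus a $\theta_Y$ of the claimed $\pdeg$ with $x_1 \nmid P_2$ exists, and a complementary element $\psi_Y \in x_1\Der(S)$ of $\pdeg = |\mu_X| - \pdeg\theta_Y$ comes from a basis element of the free module $D(\CA'_X,\mu'_X)$ (rank 2 and hence free by \cite[Cor.~7]{ziegler:multiarrangements}); Saito's criterion then verifies $\{\theta_Y,\psi_Y\}$ is a basis of $D(\CA_X,\mu_X)$.

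\emph{Main obstacle.} The most delicate point is the sharpness of the bound in case (ii): ruling out any $\theta \notin x_1\Der(S)$ of $\pdeg < k-1$. Such a $\theta$ would satisfy $P_2 - c_iP_1 \in \alpha_i^{m_i}S$ with the left side of degree $< k-1$ while the $c_i$ run over $k-1$ distinct values; a Vandermonde-type interpolation argument, using that each $\alpha_i^{m_i}$ with $m_i\geq 1$ is a nontrivial factor, forces either $P_1 = P_2 = 0$ or $x_1 \mid P_2$, the contradiction sought. The borderline edge case $m_0 = k$ of (ii) (where $|\mu_X| = 2k-1$ forces all $m_i = 1$ for $i \geq 1$) must be handled separately by taking $\theta_Y = \bigl(\prod_{i \geq 1}\alpha_i\bigr)\partial/\partial x_2$ explicitly. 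Case (iii) is cleaner, since $|\mu_X| = 2k$ exactly forces balanced exponents $(k,k)$ by the same symmetric counting.
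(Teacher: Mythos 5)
First, note what you are comparing against: the paper does not prove this proposition at all --- it is recalled verbatim from \cite[Prop.~4.1]{abeteraowakefield:euler}, so any proof here is a reconstruction of Abe--Terao--Wakefield's argument. Your framework is the right one: the rank-$2$ reduction, the identification (via Remark \ref{rem:euler} and the basis $\{\theta_Y,\psi_Y\}$) of $\mu^*(X)$ with the minimal degree of a $\theta\in D(\CA_X,\mu_X)$ having $x_1\nmid P_2$, the observation that the locus $\{x_1\mid P_2\}$ in degree $d$ is exactly $x_1\cdot D(\CA'_X,\mu'_X)_{d-1}$, and case (i) are all correct. Your interpolation idea for the lower bound in (ii) also genuinely works, and is best run homogeneously: for $F=x_1P_2-x_2P_1$ one has $x_1\mid F$ (since $x_1\mid P_1$) and $\alpha_i\mid F$ for all $k-1$ distinct $\alpha_i$, so $\deg\theta\le k-2$ forces $F=0$, hence $\theta=g\theta_E$ with $x_1^{m_0-1}\mid g$; this is where $m_0>1$ enters and yields $x_1\mid P_2$. (Your dehomogenized $p(t)$ has degree $\le d+1=k-1$ at the edge $d=k-2$, so you need the extra factor $x_1\mid F$ to kill it --- a detail your sketch glosses over.) The one under-justified step in (ii) is the phrase ``a strictly smaller space'': the naive parameter count only bounds \emph{both} $\dim D(\CA_X,\mu_X)_{k-1}$ and $\dim D(\CA'_X,\mu'_X)_{k-2}$ from below, so strictness does not follow from counting. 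It is, however, completable with your own tools: the same $F$-argument applied to the deletion (where $m_0-1\ge1$ still holds) shows every element of $D(\CA'_X,\mu'_X)$ of degree $\le k-2$ is an $S$-multiple of $x_1^{m_0-2}\prod_i\alpha_i^{m_i-1}\theta_E$, so $\dim D(\CA'_X,\mu'_X)_{k-2}=\max(0,2k-1-|\mu_X|)<2k-|\mu_X|\le\dim D(\CA_X,\mu_X)_{k-1}$. Your separate treatment of $m_0=k$ is then unnecessary but harmless.

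The genuine gap is case (iii). The claim that $|\mu_X|=2k$ ``forces balanced exponents $(k,k)$ by the same symmetric counting'' is not an argument and cannot be made into one: counting produces elements in degree $k$ but cannot exclude an \emph{unexpectedly} small element of degree $k-1$, and the $F$-argument fails here because $\deg F=k$ permits $F=c\,x_1\prod_i\alpha_i$ with only first-order vanishing along each line --- the order-one divisibilities give the lower bound $\min(k-1,\,|\mu_X|-k+1)=k-1$, so exponents $(k-1,k+1)$ are not ruled out. Ruling them out is precisely the content of the nontrivial, position-independent theorem that $\exp(\CA,2\cdot\one)=(k,k)$ for every rank-$2$ arrangement of $k$ lines (Theorem \ref{WakaTheo}(i) handles $k=3$; the general case is in \cite{abeteraowakefield:multichapol}, and is what \cite{abeteraowakefield:euler} invoke). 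Dimension counting is structurally incapable of deciding this, since exponents of rank-$2$ multiarrangements are in general \emph{not} determined by $k$ and $\mu$ alone (they can depend on the cross-ratios of the lines for larger multiplicities); one must exploit the second-order conditions $\alpha_i^2\mid P_2-c_iP_1$ in an essential way, or cite the known result. So (i) and (ii) are sound (the latter after the strictness repair above), but (iii) as written is a claim, not a proof.
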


We can now state the general Addition-Deletion Theorem for free multiarrangements.

\begin{theorem}
	[{\cite[Thm.~0.8]{abeteraowakefield:euler}}
	Addition-Deletion Theorem for Multiarrangements]
	\label{thm:add-del}
	Suppose that $(\CA, \mu) \ne \Phi_\ell$.
	Fix $H_0$ in $\CA$ and 
	let  $(\CA, \mu), (\CA', \mu')$ and  $(\CA'', \mu^*)$ be the triple with respect to $H_0$. 
	Then any  two of the following statements imply the third:
	\begin{itemize}
		\item[(i)] $(\CA, \mu)$ is free with $\exp (\CA, \mu) = (e_1, \ldots , e_{\ell -1}, b_\ell)$;
		\item[(ii)] $(\CA', \mu')$ is free with $\exp (\CA', \mu') = (e_1, \ldots , e_{\ell -1}, b_\ell-1)$;
		\item[(iii)] $(\CA'', \mu^*)$ is free with $\exp (\CA'', \mu^*) = (e_1, \ldots , e_{\ell -1})$.
	\end{itemize}
\end{theorem}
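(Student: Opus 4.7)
My plan is to prove freeness in cases (i) and (ii) by induction on $r$, applying the Addition-Deletion Theorem \ref{thm:add-del} iteratively to the hyperplane $H_0=\ker(x_1+x_2)$ that carries the variable multiplicity $d=r$. Since $\CA_{C_3}$ has only seven hyperplanes, the restriction $(\CA_{C_3}^{H_0}, \mu^*)$ is a rank-$2$ multiarrangement, hence automatically free by \cite[Cor.~7]{ziegler:multiarrangements}. Thus the only nontrivial content of each Addition-Deletion step is a numerical check on the Euler multiplicities. I would first describe the restriction explicitly: parameterizing $H_0$ by $(x_1,x_3)$ via $x_2=-x_1$, the seven hyperplanes of $\CA_{C_3}$ meet $H_0$ in four distinct lines
\[
L_1=H_0\cap\{x_1=0\},\ L_2=H_0\cap\{x_3=0\},\ L_3=H_0\cap\{x_1+x_3=0\},\ L_4=H_0\cap\{x_3-x_1=0\},
\]
where $L_1$ absorbs the contributions of both $x_1$ and $x_2$, and $L_2$ absorbs both $x_3$ and $x_1+x_2+x_3$. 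I would then compute the Euler multiplicities $\mu^*(L_i)$ via Proposition \ref{ATWEulerProp} where it applies and by an explicit basis construction for $D(\CA_{C_3,L_i}, \mu_{L_i})$ otherwise. The inequalities on $r$ in (i) and (ii) are precisely what is needed to force $\mu^*(L_1)=2k+1$, giving the rank-$2$ restriction the expected exponents $(2k+1, 2k+1)$.

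For the inductive step, assume $(\CA_{C_3}, \mu')$ is free at multiplicity $d=r-1$ with exponents $(r-1, 2k+1, 2k+1)$. Combined with the freeness of $(\CA_{C_3}^{H_0}, \mu^*)$ with exponents $(2k+1, 2k+1)$, Theorem \ref{thm:add-del} yields freeness of $(\CA_{C_3}, \mu)$ with the claimed exponents $(r, 2k+1, 2k+1)$. The base cases are $r=1$ for (i) with $k\in\{1,2,3\}$ and $r=2k-5$ for (ii); in each I would construct an explicit basis of $D(\CA_{C_3}, \mu)$ and apply Saito's criterion (Theorem \ref{theorem: saitos criterion}). For the inductive freeness assertion when $r\geq 2k$, the very chain of single-multiplicity-increment steps from the base exhibits the required filtration, provided the base at $r=2k$ is itself inductively free; this can be handled by descending further, decrementing multiplicities on the simple hyperplanes in an order compatible with the rank-$2$ Euler multiplicity computations above.

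Finally, to show that $\mu=(3,3,3,1,1,1,3)$ is not inductively free, I would run a finite case check over the seven possible hyperplanes $H$ that could serve as the last step of a hypothetical inductive chain: for each, I compute the Euler multiplicity on $\CA_{C_3}^H$ and the exponents forced on the predecessor by Theorem \ref{thm:add-del}, and verify that in no case does a valid free predecessor exist. The main obstacle I foresee is the base case $r=2k-5$ in (ii): with $|\mu_{L_1}|=4k-5$, this falls outside the ranges where Proposition \ref{ATWEulerProp} directly yields $\mu^*(L_1)$, so an ad hoc explicit basis of the rank-$2$ localized module in the correct degrees must be constructed, and matching those degrees to $2k+1$ is the delicate numerical heart of the argument; the inductive-freeness exclusion for $(3,3,3,1,1,1,3)$ is similarly a careful but finite case analysis once the admissible Euler-multiplicity profiles on each restriction have been tabulated.
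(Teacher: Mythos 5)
Your proposal does not prove the statement it was supposed to prove. The statement in question is the Addition--Deletion Theorem for multiarrangements (Theorem \ref{thm:add-del}): a general, rank-arbitrary ``two-out-of-three'' assertion about an arbitrary triple $(\CA,\mu)$, $(\CA',\mu')$, $(\CA'',\mu^*)$ with respect to a fixed $H_0$, with the Euler multiplicity $\mu^*$ on the restriction. What you have written instead is a proof sketch for a completely different result, namely Proposition \ref{prop:SpecialFreeMultC_3} concerning the specific multiarrangement $(\CA_{C_3},\mu)$ with $\mu=(k,k,k,r,1,1,k)$: your induction on $r$, the hyperplane $\ker(x_1+x_2)$ carrying the variable multiplicity, the target exponents $(r,2k+1,2k+1)$, and the non-inductive-freeness of $(3,3,3,1,1,1,3)$ all belong to that proposition, not to Theorem \ref{thm:add-del}. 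Worse, read as an attempted proof of Theorem \ref{thm:add-del} your argument is circular: your opening sentence applies ``the Addition-Deletion Theorem \ref{thm:add-del} iteratively,'' i.e., you assume the very statement to be established, and you also lean on Proposition \ref{ATWEulerProp}, which is itself part of the Euler-multiplicity machinery developed in \cite{abeteraowakefield:euler} alongside the theorem.

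Note that the paper itself offers no proof of Theorem \ref{thm:add-del}; it is quoted verbatim from \cite[Thm.~0.8]{abeteraowakefield:euler}. A genuine proof cannot proceed by analyzing one three-dimensional example. It requires the structural apparatus of Abe--Terao--Wakefield: the distinguished homogeneous bases $\{\theta_Y,\psi_Y,D_3,\ldots,D_\ell\}$ of the rank-$2$ localizations $D(\CA_Y,\mu_Y)$ used in Definition \ref{def:Euler} to define $\mu^*$, a well-defined Euler restriction map $\res\colon D(\CA,\mu)\to D(\CA'',\mu^*)$, the exactness of the sequence $0\to D(\CA',\mu')\xrightarrow{\,\alpha_0\cdot\,} D(\CA,\mu)\xrightarrow{\ \res\ } D(\CA'',\mu^*)$, and then a degree count (the exponents must sum to $|\mu|$, respectively $|\mu|-1$ and $|\mu^*|$) combined with Ziegler's version of Saito's criterion (Theorem \ref{theorem: saitos criterion}) to deduce each of the three implications. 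None of these ingredients appears in your proposal, so there is nothing in it that could be salvaged into a proof of the stated theorem; as a sketch for Proposition \ref{prop:SpecialFreeMultC_3} it is closer in spirit to the paper's actual argument there, but that is not the task that was posed.
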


\begin{remark}
	\label{rem:AddDelThm_multi_simple}
	If we restrict to simple multiplicities, Theorem \ref{thm:add-del} 
	recovers Terao's original Addition-Deletion Theorem 
 from \cite{terao:freeI}. As in the simple case if $(\CA, \mu)$ and $(\CA',\mu')$ are free, then all three statements of Theorem \ref{thm:add-del} hold (see \cite[Thm.~0.4]{abeteraowakefield:euler}).
\end{remark}

Analogous to the simple case, Theorem \ref{thm:add-del} motivates 
the notion of inductive freeness. 

\begin{defn}[{\cite[Def.~0.9]{abeteraowakefield:euler}}]
	\label{def:indfree}
	The class $\CIFM$ of \emph{inductively free} multiarrangements 
	is the smallest class of multiarrangements subject to
	\begin{itemize}
		\item[(i)] $\Phi_\ell \in \CIFM$ for each $\ell \ge 0$;
		\item[(ii)] for a multiarrangement $(\CA, \mu)$, if there exists a hyperplane $H_0 \in \CA$ such that both
		$(\CA', \mu')$ and $(\CA'', \mu^*)$ belong to $\CIFM$, and $\exp (\CA'', \mu^*) \subseteq \exp (\CA', \mu')$, 
		then $(\CA, \mu)$ also belongs to $\CIFM$.
	\end{itemize}
\end{defn}

Similarly, we also have the notion of recursive freeness for multiarrangements.
\begin{defn}[{\cite[Def.~2.21]{hogeroehrleschauenburg:free}}]
	\label{def:recfree-mult}
	The class $\CRFM$ of \emph{recursively free} multiarrangements 
	is the smallest class of multiarrangements subject to
	\begin{itemize}
		\item[(i)] $\Phi_\ell \in \CRFM$ for each $\ell \ge 0$;
		\item[(ii)] for a multiarrangement $(\CA, \mu)$, if there exists a hyperplane $H_0 \in \CA$ such that both
		$(\CA', \mu')$ and $(\CA'', \mu^*)$ belong to $\CRFM$, and $\exp (\CA'', \mu^*) \subseteq \exp (\CA', \mu')$, 
		then $(\CA, \mu)$ also belongs to $\CRFM$.
		\item[(iii)] for a multiarrangement $(\CA, \mu)$, if there exists a hyperplane $H_0 \in \CA$ such that both
		$(\CA, \mu)$ and $(\CA'', \mu^*)$ belong to $\CRFM$, and $\exp (\CA'', \mu^*) \subseteq \exp (\CA, \mu)$, 
		then $(\CA', \mu')$ also belongs to $\CRFM$.
	\end{itemize}
\end{defn}

Moreover, we consider the following weaker notion.

\begin{defn}[{\cite[Def.~3.3]{hogeroehrle:zieglerII}}]
	\label{def:multaddfree}
	The multiarrangement $(\CA, \mu)$ is said to be  
	\emph{additively free} if there is a free filtration of multiplicities $\mu_i$ on $\CA$,
	\[
	\mu_0 < \mu_1 < \cdots < \mu_n = \mu,
	\]
	i.e., where each $(\CA, \mu_i)$ is free with $|\mu_i| = i$.
	Denote this class by $\CAFM$. In particular, $(\CA, \mu_0) = \Phi_\ell$. 
\end{defn}

\begin{remark}
	\label{rem:rank2indfree}
	As for simple arrangements, if $r(\CA) \le 2$,
	then $(\CA, \mu)$  is inductively free,  
	\cite[Cor.~7]{ziegler:multiarrangements}.
	Also, $\CIFM \subseteq \CRFM$  and $\CIFM \subseteq \CAFM$ and all inclusions are strict.
	
	By Remark \ref{rem:AddDelThm_multi_simple},
	restricting to the subclasses of simple arrangements within the above defined
	 classes, we recover the classes of inductively free $\CIF$ (cf.~\cite[Def.~4.53]{orlikterao:arrangements}), additively free $\CAF$ (cf.~\cite[Def.~1.6]{abe:sf}),
	 and recursively free $\CRF$ (cf.~\cite[Def.~4.60]{orlikterao:arrangements}) simple arrangements.	 
\end{remark}

In general, it is a hard problem to compute the exponents of a free multiarrangement even in the rank $2$ case. We require the following result by Wakamiko.

\begin{theorem}[{\cite[Thm.~1.5]{wakamiko:2arrangements}}]\label{WakaTheo}
	Let $\CA=\{H_1,H_2,H_3\}$ be a $2$-arrangement of three lines, $\mu$ a multiplicity on $\CA$ with $\mu(H_i)=k_i,(i=1,2,3)$ and $\exp(\CA,\mu)=(e_1,e_2)$. Assume that $k_3\geq\max\{k_1,k_2\}$ and let $k=k_1+k_2+k_3$.
	\begin{itemize}
		\item[(i)] If $k_3< k_1+k_2-1$, then  
		\begin{equation*}
			\vert e_1-e_2\vert =
			\begin{cases}
				0 & \text{if $k$ is even,}\\
				1 & \text{if $k$ is odd.}\\
			\end{cases}       
		\end{equation*}
		\item[(ii)] If $k_3 \geq k_1+k_2-1$, then $\exp(\CA,\mu)=(k_1+k_2,k_3)$.
	\end{itemize}
\end{theorem}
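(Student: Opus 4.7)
Since every rank-$2$ multiarrangement is free by \cite[Cor.~7]{ziegler:multiarrangements}, $(\CA,\mu)$ is automatically free with exponents $(d_1,d_2)$ satisfying $d_1 + d_2 = |\mu| = k$, so only $\min(d_1,d_2)$ needs to be pinned down. After choosing coordinates with $\alpha_1 = x$, $\alpha_2 = y$, $\alpha_3 = x - y$, a homogeneous derivation $\theta = f\,\partial_x + g\,\partial_y$ lies in $D(\CA,\mu)$ precisely when $x^{k_1}\mid f$, $y^{k_2}\mid g$, and $(x-y)^{k_3}\mid f-g$.

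For part (ii), the plan is to exhibit an explicit basis and invoke Saito's criterion (Theorem \ref{theorem: saitos criterion}). The derivation
\[
\theta_1 := x^{k_1}y^{k_2}(\partial_x + \partial_y)
\]
of degree $k_1+k_2$ lies trivially in $D(\CA,\mu)$ because $\theta_1(x-y) = 0$. For a second basis element $\theta_2 = x^{k_1}F\,\partial_x + y^{k_2}G\,\partial_y$ of degree $k_3$, I would solve
\[
y^{k_2}G - x^{k_1}F = (x-y)^{k_3}
\]
by expanding the right-hand side monomially and assigning each monomial $x^i y^{k_3-i}$ either to $-x^{k_1}F$ (when $i \geq k_1$) or to $y^{k_2}G$ (when $k_3-i \geq k_2$). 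The hypothesis $k_3 \geq k_1+k_2-1$ is precisely what makes the ``gap'' $k_3-k_2 < i < k_1$ empty, so every monomial is absorbed and a valid solution $(F,G)$ exists. Then $\det(\theta_1,\theta_2) = x^{k_1}y^{k_2}(y^{k_2}G - x^{k_1}F) = x^{k_1}y^{k_2}(x-y)^{k_3} = Q(\CA,\mu)$, and Saito's criterion delivers $\exp(\CA,\mu) = (k_1+k_2,k_3)$.

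For part (i), under $k_3 < k_1+k_2-1$, the plan is a dimension count. Parametrize elements of $D(\CA,\mu)_d$ by pairs $(G,h)$ via $g = y^{k_2}G$ and $f = y^{k_2}G + (x-y)^{k_3}h$; the residual condition $x^{k_1}\mid f$ imposes $k_1$ linear conditions on a $(2d - k_2 - k_3 + 2)$-dimensional source, giving
\[
\dim D(\CA,\mu)_d \;\geq\; 2d - k + 2.
\]
Setting $d = \lfloor k/2\rfloor$ produces a nonzero derivation, so $d_1 \leq \lfloor k/2\rfloor$. The complementary bound $d_1 \geq \lfloor k/2\rfloor$, i.e., that no derivation of strictly smaller degree exists, together with $d_1 + d_2 = k$ then yields $|d_1 - d_2| \in \{0,1\}$ with the correct parity dictated by $k$.

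The main obstacle is precisely this lower bound, which reduces to verifying that the $k_1$ linear conditions on $(G,h)$ are independent throughout the range $d < \lfloor k/2\rfloor$. This becomes a rank computation for a matrix whose entries are signed binomial coefficients $\binom{k_3}{j}$; the hypothesis $k_3 < k_1+k_2-1$ is what prevents low-degree cancellations between the $y^{k_2}G$ and $(x-y)^{k_3}h$ parts of $f$. A possible alternative is induction on $|\mu|$ via the Addition-Deletion Theorem \ref{thm:add-del}, starting from the base case $k_3 = 0$ (where $\CA=\{H_1,H_2\}$ has $\exp(\CA,\mu)=(k_1,k_2)$) and computing the Euler multiplicity on the rank-$1$ restriction through Proposition \ref{ATWEulerProp}; however, the Euler multiplicity in the intermediate steps is itself an instance of the same exponent-balancing phenomenon, so this inductive route requires careful bookkeeping to ensure the condition $k_3 < k_1+k_2-1$ is preserved and not simply reshuffled into part (ii).
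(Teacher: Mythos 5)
First, a point of comparison: the paper offers no proof of this statement at all --- it is imported verbatim as \cite[Thm.~1.5]{wakamiko:2arrangements} --- so your attempt can only be measured against Wakamiko's original argument, which likewise proceeds by constructing explicit homogeneous bases in the normalized coordinates $x$, $y$, $x-y$. Your part (ii) is correct and complete in this spirit: $\theta_1 = x^{k_1}y^{k_2}(\partial_x+\partial_y)$ kills $x-y$, the monomial-absorption argument for $y^{k_2}G - x^{k_1}F = (x-y)^{k_3}$ is valid (an integer $i$ with $k_3-k_2 < i < k_1$ exists precisely when $k_3 \leq k_1+k_2-2$, so the hypothesis $k_3 \geq k_1+k_2-1$ is exactly the emptiness of the gap, and $k_3 \geq \max\{k_1,k_2\}$ keeps $\deg F, \deg G \geq 0$), and the determinant computation together with Theorem \ref{theorem: saitos criterion} yields $\exp(\CA,\mu)=(k_1+k_2,k_3)$.

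Part (i), however, has a genuine gap, and you have named it yourself without closing it. Your parametrization gives the upper bound $d_1 \leq \lfloor k/2 \rfloor$ correctly (the count $\dim D(\CA,\mu)_d \geq 2d-k+2$ is legitimate at $d = \lfloor k/2\rfloor$, since $k_3 < k_1+k_2-1$ forces $k_3 < \lfloor k/2 \rfloor$, so both $G$ and $h$ live in nonzero-dimensional spaces, and the parametrization is injective). But the entire substance of part (i) is the reverse inequality $D(\CA,\mu)_d = 0$ for $d < \lfloor k/2 \rfloor$, i.e.\ that the $k_1$ truncation conditions on $(G,h)$ have full rank; this is where the hypothesis $k_3 < k_1+k_2-1$ and the characteristic-zero assumption must actually be used, and your proposal stops at "this becomes a rank computation for a matrix whose entries are signed binomial coefficients" without performing it. The fallback route via Theorem \ref{thm:add-del} does not rescue this as sketched: for a rank-$2$ arrangement the restriction $\CA''$ is a single point whose Euler multiplicity is, by Remark \ref{rem:euler}, the common exponent of $(\CA_X,\mu_X)$ and $(\CA'_X,\mu'_X)$ --- that is, computing $\mu^*$ presupposes exactly the exponent information you are trying to establish, and Proposition \ref{ATWEulerProp} only covers special ranges (e.g.\ $|\mu_X| \leq 2k-1$). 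The interlacing fact that deleting one multiplicity drops exactly one exponent by $1$ (Remark \ref{rem:AddDelThm_multi_simple}) tells you an exponent moves, but not which one, which is again the same question. So as it stands, part (i) is asserted, not proved; either carry out the binomial-matrix rank argument (the relevant determinants are products of ratios of factorials and are nonzero over $\BBQ$, so the approach is completable but requires real work), or simply cite \cite{wakamiko:2arrangements} as the paper does.
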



\subsection{Connected subgraph arrangements}
We summarize the important definitions and results from \cite{cuntzkuehne:subgrapharrangements}. 
We start by recalling the definition of a connected subgraph arrangement. In \cite{cuntzkuehne:subgrapharrangements} the authors introduced this concept for an arbitrary underlying field. Here we restrict ourselves to arrangements over $\BBQ$.

\begin{defn}[{\cite[Def.~1.1]{cuntzkuehne:subgrapharrangements}}]
	Let $G=(N,E)$ be an undirected (simple) graph with vertex set $N=[n] =\{1,\dots,n\}$ and edges $E$. We define the \emph{connected subgraph arrangement} $\CA_G$ in $\BBQ^n$ as
	\[\CA_G:=\{H_I\mid \emptyset\neq I\subseteq N \text{ if } G[I] \text{ is connected}\},\]
	where $H_I$ is the hyperplane 
	\[H_I= \ker\sum_{i\in I}x_i\]
	and $G[I]$ is the induced subgraph on the vertices $I\subseteq N$. 
\end{defn}

By this definition, all defining linear forms of a connected subgraph arrangement 
are $0/1$-vectors. 
The following families of graphs play a 
central role in the investigation of free simple connected subgraph arrangements. They are equally crucial for our study.

\begin{defn}[\cite{cuntzkuehne:subgrapharrangements}]\label{definition: cuntz kühne graph families}
	\begin{itemize}
		\item The \emph{path-graph} $P_n$ on $n$ vertices.
		\begin{figure}[H]
			\begin{tikzpicture}
				\node (t) at (-1,0) {$P_n$:};
				\node (v1) at (0,0) {};
				\node (v2) at (2,0) {};
				\node (v2h) at (2.5,0) {};
				\node (vmid) at (3,0) {};
				\node (vn-1h) at (3.5,0) {};
				\node (vn-1) at (4,0) {};
				\node (vn) at (6,0) {};
				\fill ($(v1)$) circle[radius=2pt];
				\fill ($(v2)$) circle[radius=2pt];
				\fill ($(vn-1)$) circle[radius=2pt];
				\fill ($(vn)$) circle[radius=2pt];
				\node[below] at ($(v1)$) {$1$};
				\node[below] at ($(v2)$) {$2$};
				\node[below] at ($(vn-1)$) {$n-1$};
				\node[below] at ($(vn)$) {$n$};
				\draw ($(v1)$) -- ($(v2)$) -- ($(v2h)$);
				\draw[dashed] ($(v2h)$) -- ($(vn-1h)$);
				\draw ($(vn-1h)$) -- ($(vn-1)$) -- ($(vn)$);
			\end{tikzpicture}
		\end{figure}
		\item The \emph{almost-path-graph} $A_{n,k}$ on $n+1$ vertices, where $1\leq k\leq n$. Draw a path-graph on $n$ vertices, add an additional vertex $n+1$, and connect the vertices $k$ and $n+1$ by an edge.
		\begin{figure}[H]
			
			\begin{tikzpicture}
				\node (t) at (-1,0) {$A_{n,k}$:};
				\node (v1) at (0,0) {};
				\node (v2) at (2,0) {};
				\node (v2h+) at (2.5,0) {};
				\node (v2kmid) at (3,0) {};
				\node (vkh-) at (3.5,0) {};
				\node (vk) at (4,0) {};
				\node (vn+1) at (4,1) {};
				\node (vkh+) at (4.5,0) {};
				\node (vknmid) at (5,0) {};
				\node (vnh-) at (5.5,0) {};
				\node (vn) at (6,0) {};
				\fill ($(v1)$) circle[radius=2pt];
				\fill ($(v2)$) circle[radius=2pt];
				\fill ($(vk)$) circle[radius=2pt];
				\fill ($(vn)$) circle[radius=2pt];
				\fill ($(vn+1)$) circle[radius=2pt];
				\node[below] at ($(v1)$) {$1$};
				\node[below] at ($(v2)$) {$2$};
				\node[below] at ($(vk)$) {$k$};
				\node[below] at ($(vn)$) {$n$};
				\node[right] at ($(vn+1)$) {$n+1$};
				\draw ($(v1)$) -- ($(v2)$) -- ($(v2h+)$);
				\draw[dashed] ($(v2h+)$) -- ($(vkh-)$);
				\draw ($(vkh-)$) -- ($(vk)$) -- ($(vkh+)$);
				\draw ($(vk)$) -- ($(vn+1)$);
				\draw[dashed] ($(vkh+)$) -- ($(vnh-)$);
				\draw ($(vnh-)$) -- ($(vn)$);
			\end{tikzpicture}
		\end{figure}
		\item The \emph{cycle-graph} $C_n$ on $n$ vertices. Draw $n$ vertices and connect vertex $1$ and $2$, $2$ and $3$, $\dots$, $n$ and $1$.
		\begin{figure}[H]
			\begin{tikzpicture}
				
				\node (t) at (-2.75,0) {$C_{n}$:};
				\node (v1) at (0.85065080835203988,0.61803398874989468) {};  
				\node (v2) at (0.85065080835203988,-0.61803398874989468) {}; 
				\node (v2+) at (0.32491969623290623,-1.) {}; 
				\node (vn-1-) at (-0.85065080835203988,-0.61803398874989468) {};  
				\node (vn-1) at (-1.0514622242382672,0.0) {}; 
				\node (vn) at (-0.32491969623290623,1.) {};
				\fill ($(v1)$) circle[radius=2pt];
				\fill ($(v2)$) circle[radius=2pt];
				\fill ($(vn-1)$) circle[radius=2pt];
				\fill ($(vn)$) circle[radius=2pt];
				\node[right] at ($(v1)$) {$1$};
				\node[right] at ($(v2)$) {$2$};
				\node[left] at ($(vn-1)$) {$n-1$};
				\node[above] at ($(vn)$) {$n$};
				
				\draw ($(v2+)$) arc (-72:220:1.0514622242382672) ;
				\draw[dashed] ($(vn-1-)$) arc (216:290:1.0514622242382672) ;
			\end{tikzpicture}
		\end{figure}
		\item The \emph{path-with-triangle-graph} $\Delta_{n,k}$, where $1\leq k\leq n-1$. Draw a path-graph on $n$ vertices, add an additional vertex $n+1$, and connect the vertices $k$ and $n+1$ by an edge as well as the vertices $k+1$ and $n+1$.
		\begin{figure}[H]
			\begin{tikzpicture}
				\node (t) at (-1,0) {$\Delta_{n,k}$:};
				\node (v1) at (0,0) {};
				\node (v2) at (2,0) {};
				\node (v2h+) at (2.5,0) {};
				\node (v2kmid) at (3,0) {};
				\node (vkh-) at (3.5,0) {};
				\node (vk) at (4,0) {};
				\node (vn+1) at (5,1.6) {};
				\node (vk+1) at (6,0) {};
				\node (vk+1h+) at (6.5,0) {};
				\node (vk+1nmid) at (7,0) {};
				\node (vnh-) at (7.5,0) {};
				\node (vn) at (8,0) {};
				\fill ($(v1)$) circle[radius=2pt];
				\fill ($(v2)$) circle[radius=2pt];
				\fill ($(vk)$) circle[radius=2pt];
				\fill ($(vk+1)$) circle[radius=2pt];
				\fill ($(vn)$) circle[radius=2pt];
				\fill ($(vn+1)$) circle[radius=2pt];
				\node[below] at ($(v1)$) {$1$};
				\node[below] at ($(v2)$) {$2$};
				\node[below] at ($(vk)$) {$k$};
				\node[below] at ($(vk+1)$) {$k+1$};
				\node[below] at ($(vn)$) {$n$};
				\node[right] at ($(vn+1)$) {$n+1$};
				\draw ($(v1)$) -- ($(v2)$) -- ($(v2h+)$);
				\draw[dashed] ($(v2h+)$) -- ($(vkh-)$);
				\draw ($(vkh-)$) -- ($(vk)$) -- ($(vk+1)$) -- ($(vk+1h+)$);
				\draw ($(vk)$) -- ($(vn+1)$) -- ($(vk+1)$);
				\draw[dashed] ($(vk+1h+)$) -- ($(vnh-)$);
				\draw ($(vnh-)$) -- ($(vn)$);
			\end{tikzpicture}
		\end{figure}
	\end{itemize}
\end{defn}

In \cite{yoshinaga:extendable}, M.\ Yoshinaga investigated the freeness question of multiplicities for
a certain class of arrangements. His setting turns out to be important for our study as well.

\begin{defn}[{\cite[Def.~2.1]{yoshinaga:extendable}}]
	Let $\CA$ be a hyperplane arrangement.
	\begin{itemize}
		\item[(i)] We call $\CA$ \emph{locally $A_2$}, if $\vert\CA_X\vert\leq 3$ for all $X\in L(\CA)_2$.
		
		\item[(ii)] Fix a reduced system of defining forms $\Phi \subseteq V^*$ for $\CA$, i.e., $\CA = \{\ker(\alpha) \mid \alpha \in \Phi\}$ and
		$|\Phi| = |\CA|$.
		Then $\Phi$ is called a \emph{positive system} for the locally $A_2$ arrangement $\CA$ provided for each rank $2$ localization
		with three elements $\CA_X=\{\ker(\alpha),\ker(\beta),\ker(\gamma)\}$ ($\alpha,\beta,\gamma \in \Phi$) 
		one of $\alpha=\beta+\gamma$, $\beta=\alpha+\gamma$, $\gamma=\alpha+\beta$ holds.
	\end{itemize}
\end{defn}

Connected subgraph arrangements are locally $A_2$ and have positive systems.

\begin{lemma}[{\cite[Lem.~2.10]{cuntzkuehne:subgrapharrangements}}]\label{lemma: connected subgraph arrangements are locallyA2}
	For pairwise distinct nonempty subsets $A_1, A_2, A_3\subseteq  [n]$ and $n\geq1$ the following two conditions are equivalent:
	\begin{itemize}
		\item[(i)] $\rank(H_{A_1}\cap H_{A_2}\cap H_{A_3})=2$ and
		\item[(ii)] $A_{i_1}\dot{\cup}A_{i_2} = A_{i_3}$ for some choice of pairwise distinct indices $1 \leq i_1, i_2, i_3 \leq 3$ where $\dot{\cup}$
		denotes a disjoint set union.
	\end{itemize}
\end{lemma}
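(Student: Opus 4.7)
The plan is to translate the rank condition into a linear-algebraic statement about the characteristic vectors $\chi_{A_j} \in \{0,1\}^n$ of the defining forms $\alpha_j = \sum_{i \in A_j} x_i$. Since the $A_j$ are distinct and nonempty, no two of these $0/1$-vectors are proportional, so $\rank(H_{A_1} \cap H_{A_2} \cap H_{A_3}) = 2$ is equivalent to the existence of a nontrivial linear relation $\lambda_1 \chi_{A_1} + \lambda_2 \chi_{A_2} + \lambda_3 \chi_{A_3} = 0$ with all $\lambda_j$ nonzero.

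The implication (ii)$\Rightarrow$(i) is immediate: if $A_{i_3} = A_{i_1} \dot{\cup} A_{i_2}$, then $\chi_{A_{i_3}} = \chi_{A_{i_1}} + \chi_{A_{i_2}}$ is exactly such a relation.

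For (i)$\Rightarrow$(ii), I would read off the relation coordinate by coordinate. For each $k \in [n]$, setting $S_k := \{j : k \in A_j\}$, the $k$-th component of the relation reads $\sum_{j \in S_k} \lambda_j = 0$. Since all $\lambda_j$ are nonzero, no $S_k$ can be a singleton, which forces $A_j \subseteq A_{j'} \cup A_{j''}$ for every choice of indices $\{j,j',j''\} = \{1,2,3\}$. Moreover, if the triple intersection $A_1 \cap A_2 \cap A_3$ is nonempty it forces $\lambda_1 + \lambda_2 + \lambda_3 = 0$, while any nonempty ``pairwise-only'' intersection $A_i \cap A_j \setminus A_k$ forces $\lambda_i + \lambda_j = 0$; combining these two conditions yields $\lambda_k = 0$, a contradiction. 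Hence the triple intersection is empty as soon as some pairwise-only intersection is nonempty (and if none is, then all three sets coincide, contradicting distinctness).

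It remains to handle the three possibilities for how many pairwise intersections $A_i \cap A_j$ are nonempty. If all three are nonempty, the three independent relations $\lambda_i + \lambda_j = 0$ force every $\lambda_j = 0$. If only one is nonempty, say $A_1 \cap A_2 \neq \emptyset$ while $A_i \cap A_3 = \emptyset$ for $i = 1,2$, then the containment $A_1 \subseteq A_2 \cup A_3$ degenerates to $A_1 \subseteq A_2$, and symmetrically $A_2 \subseteq A_1$, contradicting distinctness. The surviving case -- exactly two pairwise intersections nonempty, say $A_1 \cap A_2, A_1 \cap A_3 \neq \emptyset$ and $A_2 \cap A_3 = \emptyset$ -- combined with the already established vanishing of the triple intersection and of all pure parts $A_j \setminus (A_{j'} \cup A_{j''})$, forces $A_2, A_3 \subseteq A_1$ and $A_1 = A_2 \,\dot{\cup}\, A_3$, as required. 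The main obstacle is simply organizing the case analysis cleanly; once the coordinate-wise reading of the dependence is set up, the linear-algebraic input is minimal.
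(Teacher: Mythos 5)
Your proof is correct, and its route --- reducing the rank condition to a linear dependence $\lambda_1\chi_{A_1}+\lambda_2\chi_{A_2}+\lambda_3\chi_{A_3}=0$ with all coefficients nonzero (legitimate because distinct nonempty $0/1$-vectors are pairwise non-proportional), reading it coordinatewise via the sets $S_k$, and analyzing the pairwise intersections --- is essentially the same argument as in the cited source \cite{cuntzkuehne:subgrapharrangements}; the present paper only quotes the statement without reproducing a proof. The single omission in your final case analysis is the degenerate case where all three pairwise intersections are empty, but this is instantly excluded: since no $S_k$ is a singleton, each $A_j$ equals $(A_j\cap A_{j'})\cup(A_j\cap A_{j''})$, which would then be empty, contradicting the nonemptiness of the $A_j$.
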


A property for arrangements is called \emph{local} if it is preserved under localizations, \cite[Def.~6.1]{cuntzkuehne:subgrapharrangements}.
Typical examples of local properties of arrangements are freeness \cite[Thm.~4.37]{orlikterao:arrangements} and 
inductive freeness \cite[Thm~1.1]{hogeroehrleschauenburg:free}.
We record two graph theoretic constructions from
\cite{cuntzkuehne:subgrapharrangements} which allow us to
control local properties of the $\CA_G$.

\begin{defn}
\label{def:localag}
	Let $G=(N,E)$ be a graph and $e=\{i,j\}\in E$ an edge. We denote by $G/e$ the graph with contracted edge $e$, i.e. the graph in which the vertices $i,j$ are identified to one vertex $k$, each edge to $i$ or $j$ becomes an edge to $k$, and multiple edges and loops are removed.
	For $S \subseteq N$ the \emph{induced subgraph} of $G$ on $S$ we denote by $G[S] = (S,E\cap\binom{S}{2})$.
\end{defn}

We have the following link between these operations and localizations of $\CA_G$.

\begin{lemma}[{\cite[Lem.~6.2, Lem.~6.4]{cuntzkuehne:subgrapharrangements}}]
	\label{lemma:ind_subgraph contract localization}
	Let $G=(N,E)$ be a graph. Then
	\begin{itemize}
		\item[(i)] for $S \subseteq N$ we have $\CA_{G[S]} = (\CA_G)_X$ for some $X \in L(\CA_G)$, and
		\item[(ii)] for $e \in E$, $\CA_{G/{e}} = (\CA_G)_X$ for some $X \in L(\CA_G)$.
	\end{itemize}
	In particular, if $P$ is a local arrangement property satisfied by $\CA_G$, then both $\CA_{G[S]}$ and $\CA_{G/e}$ satisfy $P$ as well.
\end{lemma}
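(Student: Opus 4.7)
The plan is to realize each target arrangement as a localization of $\CA_G$ by exhibiting an appropriate $X \in L(\CA_G)$, identifying the hyperplanes of $\CA_G$ that contain $X$, and then matching up their defining forms with those of the target arrangement.

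For (i), I would set $X := \bigcap_{i \in S} H_{\{i\}}$, which lies in $L(\CA_G)$ since every singleton $\{i\}$ induces a connected subgraph of $G$. On $X$ the equations $x_i = 0$ for $i \in S$ hold while the remaining coordinates are free, so the defining form of a hyperplane $H_I$ restricts on $X$ to $\sum_{l \in I \setminus S} x_l$, which vanishes identically iff $I \subseteq S$. Hence
\[
(\CA_G)_X = \{H_I \mid I \subseteq S,\ G[I] \text{ connected}\},
\]
which, after suppressing the trivial free factor in the coordinates $x_l$ with $l \notin S$, is exactly $\CA_{G[S]}$.

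For (ii), given $e = \{i,j\} \in E$, I would set $X := H_{\{i,j\}} \cap \bigcap_{l \in N \setminus \{i,j\}} H_{\{l\}}$; this lies in $L(\CA_G)$ since $\{i,j\}$ induces a connected subgraph via the edge $e$. On $X$ we have $x_l = 0$ for $l \notin \{i,j\}$ together with $x_i = -x_j$, so the defining form of $H_I$ reduces on $X$ to $\sum_{l \in I \cap \{i,j\}} x_l$; this vanishes identically on $X$ iff $I \cap \{i,j\}$ is either empty or all of $\{i,j\}$. Such $I$'s are in bijection with subsets $I'$ of the vertex set $(N \setminus \{i,j\}) \cup \{k\}$ of $G/e$ via $I' = I$ when $I \cap \{i,j\} = \emptyset$ and $I' = (I \setminus \{i,j\}) \cup \{k\}$ when $\{i,j\} \subseteq I$. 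Under the identification $y_k = x_i + x_j$, $y_l = x_l$ for $l \neq k$, the defining forms on each side match.

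The main, and essentially only, subtlety is verifying in (ii) that this bijection preserves connectedness of the induced subgraphs: any path in $G[I]$ projects to a walk in $G/e[I']$ by collapsing $i,j$ to $k$, while conversely any path in $G/e[I']$ visiting $k$ lifts to a walk in $G[I]$ that uses the edge $e$ to switch between $i$ and $j$ whenever necessary. With both identifications established, the ``in particular'' clause is immediate from the definition of a local arrangement property, since any such property descends from $\CA_G$ to any localization $(\CA_G)_X$.
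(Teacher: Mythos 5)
Your proof is correct, and it is the natural argument: the paper itself states this lemma without proof, quoting it from \cite{cuntzkuehne:subgrapharrangements}, and your choices $X=\bigcap_{i\in S}H_{\{i\}}$ in (i) and $X=H_{\{i,j\}}\cap\bigcap_{l\in N\setminus\{i,j\}}H_{\{l\}}$ in (ii) are exactly the localizations used there. You also correctly handle the only two delicate points, namely the identification of $(\CA_G)_X$ with the target arrangement up to a trivial (empty) factor in the unused coordinates, and the verification that the bijection $I\leftrightarrow I'$ preserves connectedness of induced subgraphs in both directions via lifting and collapsing of paths through the contracted edge.
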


Cuntz and Kühne classified all graphs $G$ such that $\CA_G$ is a free arrangement. It turns out that $\CA_G$ is free if and only if $G$ belongs to one of the four families of graphs introduced in Definition \ref{definition: cuntz kühne graph families}.

\begin{theorem}[{\cite[Thm.~1.6]{cuntzkuehne:subgrapharrangements}}]\label{theorem: FreeConnectedSubgraphArrangements}
	Let $G$ be a connected graph. The connected subgraph arrangement $\CA_G$ is free if and only if $G$ is a path-graph, a cycle-graph, an almost-path-graph, or a path-with-triangle-graph.
\end{theorem}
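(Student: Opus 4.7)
My plan splits the biconditional into the two implications, which call for different techniques.

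For the ($\Leftarrow$) direction I treat each of the four families in turn. The cleanest case is the path $P_n$: the linear change of coordinates $y_0 := 0$, $y_j := x_1 + \cdots + x_j$ carries each defining form $\sum_{k=i}^{j} x_k$ to $y_j - y_{i-1}$, which identifies $\CA_{P_n}$ with the restriction of the type $A_n$ Coxeter arrangement to the hyperplane $\{y_0 = 0\}$, from which freeness follows from Terao's theorem on reflection arrangements together with Theorem \ref{Theorem:localFreeness}. The remaining families $A_{n,k}$, $C_n$, and $\Delta_{n,k}$ can then be built from $P_n$ (or from smaller members of the same family) by a short sequence of single-hyperplane additions, with freeness propagated at each step by the simple-multiplicity case of the Addition-Deletion Theorem (Theorem \ref{thm:add-del}, cf.\ Remark \ref{rem:AddDelThm_multi_simple}). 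Alternatively, for each family one may exhibit an explicit homogeneous basis of $D(\CA_G)$ and verify freeness directly via Saito's criterion (Theorem \ref{theorem: saitos criterion} with $\mu \equiv \one$).

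For the ($\Rightarrow$) direction I adopt a forbidden-subgraph/contraction strategy. By Lemma \ref{lemma:ind_subgraph contract localization}, both $\CA_{G[S]}$ and $\CA_{G/e}$ arise as localisations of $\CA_G$, and by Theorem \ref{theorem: Multilocalizations are free} (applied with the simple multiplicity $\mu \equiv \one$), localisations of free arrangements are themselves free. Hence it suffices to produce a finite list of connected ``forbidden'' graphs such that (a) every connected graph $G$ outside the four distinguished families contains some member of the list as either an induced subgraph or an edge-contraction, and (b) each arrangement attached to a forbidden graph is non-free. The natural candidates are the eight graphs $G_1, \ldots, G_8$ of Figure \ref{fig:G1_8}, which encode the minimal obstructions to being a path, cycle, almost-path, or path-with-triangle---typically by carrying a vertex of degree $\ge 3$ without the surrounding triangle required by $\Delta_{n,k}$, or by housing two disjoint off-spine features.

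Part (a) is a finite graph-theoretic case analysis, branching on the maximum vertex degree of $G$ and on the position of any ``extra'' edges relative to a longest path. The main obstacle is part (b): showing directly that $\CA_{G_i}$ is non-free for each $i = 1, \ldots, 8$. In \cite{cuntzkuehne:subgrapharrangements} this was settled by computer-algebra calculations of characteristic polynomials and derivation modules; the non-computational argument outlined in \S\ref{subsect:nonfreeag} of the present paper instead locates, inside each $\CA_{G_i}$, a localisation of rank $3$ that is either generic with more than three hyperplanes---hence totally non-free by Theorem \ref{theorem: Yoshinaga generic not free}---or can be ruled out non-free by an ad hoc obstruction, and then lifts non-freeness back to $\CA_{G_i}$ via another application of Theorem \ref{theorem: Multilocalizations are free}.
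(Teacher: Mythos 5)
Your overall architecture coincides with the proof the paper describes (and attributes to Cuntz--K\"uhne): the backward direction is handled family by family, and the forward direction by the forbidden-subgraph/contraction strategy, reducing via Lemma \ref{lemma:ind_subgraph contract localization} and Theorem \ref{theorem: Multilocalizations are free} to the finite list $G_1,\dots,G_8$, whose arrangements are non-free. Your part (b) is exactly what the paper supplies in \S\ref{subsect:nonfreeag}: generic localizations plus Theorem \ref{theorem: Yoshinaga generic not free} for $G_3,\dots,G_8$ (Proposition \ref{proposition: G3 to G19 totally non free}), and the $\LMP(2)>\GMP(2)$ obstruction of Proposition \ref{proposition: SimpleG1andG2notFree} for $G_1,G_2$.

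However, two steps need repair. First, your treatment of $P_n$: the coordinate change is correct and identifies $\CA_{P_n}$ with $\{y_j=0\}\cup\{y_i=y_j\}$, the essential reflection arrangement of type $A_n$; but $\{y_0=0\}$ is not a hyperplane of the braid arrangement, so this is not an arrangement restriction, and Theorem \ref{Theorem:localFreeness} (Yoshinaga's criterion, which concerns Ziegler restrictions with multiplicities) is not the relevant tool. Terao's theorem on reflection arrangements finishes this case by itself, which is also how the paper invokes it in \S\ref{sect:FreeConstantMultiplicities}. Second, and more substantively, the assertion that $C_n$, $A_{n,k}$ and $\Delta_{n,k}$ ``can be built from $P_n$ by a short sequence of single-hyperplane additions, with freeness propagated at each step'' is precisely the nontrivial content of the freeness direction in Cuntz--K\"uhne: for Theorem \ref{thm:add-del} to apply at every step one must know the exponents of each intermediate arrangement and verify that the restriction to each added hyperplane is free with exponents matching the prescribed subset; nothing guarantees a priori that a suitable ordering of the added hyperplanes exists, and you neither exhibit an ordering nor check a single restriction (likewise, the ``explicit Saito basis'' alternative is only named). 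As written, the backward direction is a plan rather than a proof. Two minor slips: the generic localizations needed for $G_7$ and $G_8$ have rank $4$ and $5$, not $3$ (see Table \ref{table: G3 to G20 totally non-free}), and for $G_1,G_2$ the mixed-product obstruction is applied directly to the full rank-$4$ arrangement, not to a proper localization that is then ``lifted.''
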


In their proof, Cuntz and Kühne  show that every graph that is not part of one of the four families from Definition \ref{definition: cuntz kühne graph families}
can get modified using iterations of the constructions from  Definition \ref{def:localag} until we end up with one of the graphs depicted in Figure \ref{fig:G1_8}. Note that $G_2$ is more commonly known as $K_4$.
Since the connected subgraph arrangements stemming from those eight graphs are not free
and freeness is a local property by Theorem \ref{theorem: Multilocalizations are free}, Lemma \ref{lemma:ind_subgraph contract localization} finishes the proof.

\begin{figure}[H]
	\begin{tikzpicture}
		\node (a1) at (2,2) {};
		\node (a2) at (0,2)  {};
		\node (a3) at (0,0)  {};
		\node (a4) at (2,0)  {};
		\foreach \from/\to in {a1/a2,a2/a3,a3/a4,a1/a4,a1/a3}
		\draw ($(\from)$) -- ($(\to)$);
		\node[right] at ($(a1)$) {$1$};
		\node[left] at ($(a2)$) {$2$};
		\node[left] at ($(a3)$) {$3$};
		\node[right] at ($(a4)$) {$4$};
		\fill ($(a1)$) circle[radius=2pt];
		\fill ($(a2)$) circle[radius=2pt];
		\fill ($(a3)$) circle[radius=2pt];
		\fill ($(a4)$) circle[radius=2pt];
		\node at (1,-0.5) {$G_1$};
	\end{tikzpicture}\quad 
	\begin{tikzpicture}
		\node (a1) at (2,2) {};
		\node (a2) at (0,2)  {};
		\node (a3) at (0,0)  {};
		\node (a4) at (2,0)  {};
		\foreach \from/\to in {a1/a2,a2/a3,a3/a4,a1/a4,a1/a3,a2/a4}
		\draw ($(\from)$) -- ($(\to)$);
		\node[right] at ($(a1)$) {$1$};
		\node[left] at ($(a2)$) {$2$};
		\node[left] at ($(a3)$) {$3$};
		\node[right] at ($(a4)$) {$4$};
		\fill ($(a1)$) circle[radius=2pt];
		\fill ($(a2)$) circle[radius=2pt];
		\fill ($(a3)$) circle[radius=2pt];
		\fill ($(a4)$) circle[radius=2pt];
		\node at (1,-0.5) {$G_2$};
	\end{tikzpicture}\quad
	\begin{tikzpicture}
		\node (a1) at (0,0) {};
		\node (a2) at (1,1)  {};
		\node (a3) at (-1,1)  {};
		\node (a4) at (-1,-1)  {};
		\node (a5) at (1,-1)  {};
		\foreach \from/\to in {a1/a2,a1/a3,a1/a4,a1/a5}
		\draw ($(\from)$) -- ($(\to)$);
		\node[below] at ($(a1)$) {$1$};
		\node[right] at ($(a2)$) {$2$};
		\node[left] at ($(a3)$) {$3$};
		\node[left] at ($(a4)$) {$4$};
		\node[right] at ($(a5)$) {$5$};
		\fill ($(a1)$) circle[radius=2pt];
		\fill ($(a2)$) circle[radius=2pt];
		\fill ($(a3)$) circle[radius=2pt];
		\fill ($(a4)$) circle[radius=2pt];
		\fill ($(a5)$) circle[radius=2pt];
		\node at (0,-1.5) {$G_3$};
	\end{tikzpicture}\quad
	\begin{tikzpicture}
		\node (a1) at (0,0) {};
		\node (a2) at (1,1)  {};
		\node (a3) at (-1,1)  {};
		\node (a4) at (-1,-1)  {};
		\node (a5) at (1,-1)  {};
		\foreach \from/\to in {a1/a2,a1/a3,a1/a4,a1/a5,a2/a3}
		\draw ($(\from)$) -- ($(\to)$);
		\node[below] at ($(a1)$) {$1$};
		\node[right] at ($(a2)$) {$2$};
		\node[left] at ($(a3)$) {$3$};
		\node[left] at ($(a4)$) {$4$};
		\node[right] at ($(a5)$) {$5$};
		\fill ($(a1)$) circle[radius=2pt];
		\fill ($(a2)$) circle[radius=2pt];
		\fill ($(a3)$) circle[radius=2pt];
		\fill ($(a4)$) circle[radius=2pt];
		\fill ($(a5)$) circle[radius=2pt];
		\node at (0,-1.5) {$G_4$};
	\end{tikzpicture}
	
	\begin{tikzpicture}
		\node (a1) at (0,0) {};
		\node (a2) at (1,1)  {};
		\node (a3) at (-1,1)  {};
		\node (a4) at (-1,-1)  {};
		\node (a5) at (1,-1)  {};
		\foreach \from/\to in {a1/a2,a1/a3,a1/a4,a1/a5,a2/a3,a4/a5}
		\draw ($(\from)$) -- ($(\to)$);
		\node[below] at ($(a1)$) {$1$};
		\node[right] at ($(a2)$) {$2$};
		\node[left] at ($(a3)$) {$3$};
		\node[left] at ($(a4)$) {$4$};
		\node[right] at ($(a5)$) {$5$};
		\fill ($(a1)$) circle[radius=2pt];
		\fill ($(a2)$) circle[radius=2pt];
		\fill ($(a3)$) circle[radius=2pt];
		\fill ($(a4)$) circle[radius=2pt];
		\fill ($(a5)$) circle[radius=2pt];
		\node at (0,-1.5) {$G_5$};
	\end{tikzpicture}\quad
	\begin{tikzpicture}
		\node (a5) at (0,0) {};
		\node (a1) at (1,1)  {};
		\node (a2) at (-1,1)  {};
		\node (a3) at (-1,-1)  {};
		\node (a4) at (1,-1)  {};
		\foreach \from/\to in {a1/a2,a2/a3,a3/a4,a1/a4,a1/a5}
		\draw ($(\from)$) -- ($(\to)$);
		\node[right] at ($(a1)$) {$1$};
		\node[left] at ($(a2)$) {$2$};
		\node[left] at ($(a3)$) {$3$};
		\node[right] at ($(a4)$) {$4$};
		\node[below] at ($(a5)$) {$5$};
		\fill ($(a1)$) circle[radius=2pt];
		\fill ($(a2)$) circle[radius=2pt];
		\fill ($(a3)$) circle[radius=2pt];
		\fill ($(a4)$) circle[radius=2pt];
		\fill ($(a5)$) circle[radius=2pt];
		\node at (0,-1.5) {$G_6$};
	\end{tikzpicture}\quad
	\begin{tikzpicture}
		\node (a1) at (0,1) {};
		\node (a2) at (-0.6,0) {};
		\node (a3) at (0.6,0) {};
		\node (a4) at (0,1.6) {};
		\node (a5) at (-1.2,-0.5) {};
		\node (a6) at (1.2,-0.5) {};
		\foreach \from/\to in {a1/a2,a1/a3,a2/a3,a1/a4,a2/a5,a3/a6}
		\draw ($(\from)$) -- ($(\to)$);
		\node[right] at ($(a1)$) {$1$};
		\node[below] at ($(a2)$) {$2$};
		\node[below] at ($(a3)$) {$3$};
		\node[right] at ($(a4)$) {$4$};
		\node[left] at ($(a5)$) {$5$};
		\node[right] at ($(a6)$) {$6$};
		\fill ($(a1)$) circle[radius=2pt];
		\fill ($(a2)$) circle[radius=2pt];
		\fill ($(a3)$) circle[radius=2pt];
		\fill ($(a4)$) circle[radius=2pt];
		\fill ($(a5)$) circle[radius=2pt];
		\fill ($(a6)$) circle[radius=2pt];
		\node at (0,-1) {$G_7$};
	\end{tikzpicture}\quad
	\begin{tikzpicture}
	\node (a1) at (0,0.5) {};
	\node (a2) at (0,1) {};
	\node (a3) at (-0.6,0) {};
	\node (a4) at (0.6,0) {};
	\node (a5) at (0,1.6) {};
	\node (a6) at (-1.2,-0.5) {};
	\node (a7) at (1.2,-0.5) {};
	\foreach \from/\to in {a1/a2,a1/a3,a1/a4,a2/a5,a3/a6,a4/a7}
	\draw ($(\from)$) -- ($(\to)$);
	\node[below] at ($(a1)$) {$1$};
	\node[right] at ($(a2)$) {$4$};
	\node[below] at ($(a3)$) {$2$};
	\node[below] at ($(a4)$) {$6$};
	\node[right] at ($(a5)$) {$5$};
	\node[left] at ($(a6)$) {$3$};
	\node[right] at ($(a7)$) {$7$};
	\fill ($(a1)$) circle[radius=2pt];
	\fill ($(a2)$) circle[radius=2pt];
	\fill ($(a3)$) circle[radius=2pt];
	\fill ($(a4)$) circle[radius=2pt];
	\fill ($(a5)$) circle[radius=2pt];
	\fill ($(a6)$) circle[radius=2pt];
	\fill ($(a7)$) circle[radius=2pt];
	\node at (0,-1) {$G_8$};
	\end{tikzpicture}
	\caption{The graphs $G_1$ up to $G_8$.}
	\label{fig:G1_8}
\end{figure}
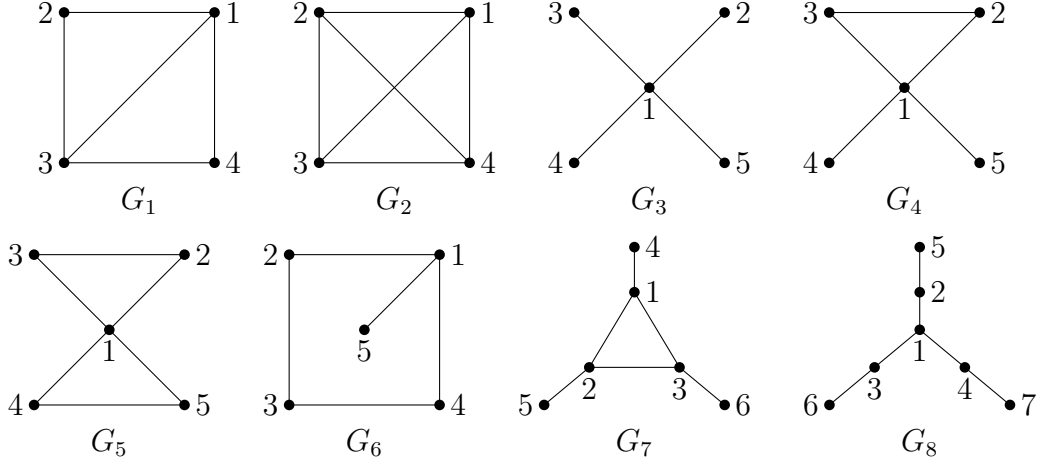

Our aim in this note is to look at these classes within the framework of multiarrangements and to extend this classification. 

\subsection{Extensions of a multiarrangement}
The following definitions and results are due to Yoshinaga \cite{yoshinaga:extendable}. 

\begin{defn}\label{definition: Yoshinga extension}
	Let $(\CA,\mu)$ be a free multiarrangement in $\mathbb{C}^\ell$. 
	\begin{itemize}
		\item[(i)] $(\CA,\mu)$ is \emph{extendable} if it can be obtained as a Ziegler restriction of a free arrangement in $\mathbb{C}^{\ell+1}$.
		\item[(ii)] Define an extension in $\mathbb{C}^{\ell+1}$ 
		with $(x_1,x_2,\dots,x_\ell,x_{\ell+1})$ as a coordinate system by
		\[\CE(\CA,\mu)=\{\ker(x_{\ell+1})\}\cup\left\{\ker(\alpha_H-kx_{\ell+1})\ \middle| \  k\in\mathbb{Z},-\frac{\mu(H)-1}{2}\leq k\leq \frac{\mu(H)}{2}, H \in \CA\right\}.\]
		Note that $(\CE(\CA,\mu)^{\ker(x_{\ell+1})},\kappa)=(\CA,\mu)$.
	\end{itemize}
\end{defn}

One of the main results in \cite{yoshinaga:extendable} is the following.

\begin{theorem}[{\cite[Thm.~2.5]{yoshinaga:extendable}}]\label{theorem: Yoshinaga extendable theorem}
	Let $\CA$ be a locally $A_2$ arrangement with a positive system in $V=\mathbb{C}^\ell$. We fix a positive system $\Phi^+=\{\alpha_H\mid H\in\CA\}\subset V^*$ of defining equations. Let $\mu:\CA\to \mathbb{Z}_{\geq 0}$ be a multiplicity. Assume the following condition:
\begin{equation}
\label{eq:star} 
\parbox[c]{14cm}{\text{Let $\CA_X=\{H_i,H_j,H_k\}$ be a codimension two localization with $\alpha_i=\alpha_j+\alpha_k$.} 
\text{If $\mu(H_i)$ is odd, then at least one of $\mu(H_j)$ or $\mu(H_k)$ is odd.}
}
\end{equation}

	Then $(\CA,\mu)$ is free, if and only if it is extendable. Indeed, $\CE(\CA,\mu)$ is a free arrangement.
\end{theorem}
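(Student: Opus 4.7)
The plan is to prove the two directions of the equivalence separately. The direction ``extendable $\Rightarrow$ free'' is immediate from Ziegler's restriction theorem (Theorem \ref{theorem: ziegler restriction}): by construction of $\CE(\CA,\mu)$, the Ziegler restriction $(\CE(\CA,\mu)^{\ker(x_{\ell+1})},\kappa)$ equals $(\CA,\mu)$, so if any free arrangement realizes $(\CA,\mu)$ as a Ziegler restriction (in particular, if $\CE(\CA,\mu)$ itself is free), then $(\CA,\mu)$ inherits freeness. The content of the theorem is therefore in the converse: starting from a free $(\CA,\mu)$ satisfying condition \eqref{eq:star}, we must produce a free extension, and the concrete construction $\CE(\CA,\mu)$ is the natural candidate.

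To prove that $\CE(\CA,\mu)$ is free, the natural tool is Yoshinaga's local-freeness criterion, Theorem \ref{Theorem:localFreeness}: setting $H_\infty := \ker(x_{\ell+1}) \in \CE(\CA,\mu)$, it suffices to verify (a) that $\CE(\CA,\mu)$ is locally free along $H_\infty$, and (b) that its Ziegler restriction to $H_\infty$ is free. Item (b) follows from the hypothesis together with the tautological identity $(\CE(\CA,\mu)^{H_\infty},\kappa)=(\CA,\mu)$ from Definition \ref{definition: Yoshinga extension}, which in turn requires a small check: for each $H\in\CA$, one counts the hyperplanes of $\CE(\CA,\mu)$ meeting $H$ in codimension two and confirms the number equals $\mu(H)$. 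The task thus reduces to item (a).

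Local freeness along $H_\infty$ asks for $\CE(\CA,\mu)_x$ to be free for every $x\in H_\infty\setminus\{0\}$. Since $\CA$ is locally $A_2$, the projection of $x$ to $V$ lies on at most three hyperplanes of $\CA$; denote this localization $\CA_y$. If $|\CA_y|\leq 2$, then $\CE(\CA,\mu)_x$ has rank at most $3$ but factors through a rank-$2$ piece (using the linear independence of the involved defining forms together with $x_{\ell+1}$), and hence is free unconditionally by \cite[Cor.~7]{ziegler:multiarrangements}. The nontrivial case is $|\CA_y|=3$: invoking the positive system, the triple is $\CA_y=\{H_i,H_j,H_k\}$ with $\alpha_i=\alpha_j+\alpha_k$, and after an obvious coordinate change $\CE(\CA,\mu)_x$ becomes, up to a product with a trivial factor, the rank-$3$ arrangement in $\BBC^3$ with defining polynomial
\[
x_3 \cdot \prod_{a}(x_1 - a x_3) \cdot \prod_{b}(x_2 - b x_3) \cdot \prod_{c}(x_1+x_2 - c x_3),
\]
where $a,b,c$ range over the integer intervals prescribed by $\mu(H_j),\mu(H_k),\mu(H_i)$ respectively via Definition \ref{definition: Yoshinga extension}.

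The main obstacle is proving that this explicit rank-$3$ deformation of the $A_2$ braid arrangement is free exactly when condition \eqref{eq:star} holds for the triple $(\mu(H_i),\mu(H_j),\mu(H_k))$. My plan is to construct a basis of derivations via Saito's criterion (Theorem \ref{theorem: saitos criterion}): the Euler derivation is always available, and the remaining two can be built either inductively via the Addition-Deletion theorem (Theorem \ref{thm:add-del}) by peeling off parallel-shift hyperplanes one by one, or by an explicit ansatz exploiting the arithmetic-progression symmetry of the three pencils about the line $x_{\ell+1}=0$. The parity condition \eqref{eq:star} is precisely what guarantees that the resulting polynomial degrees align, so that the Jacobian equals a scalar multiple of the defining polynomial; without it, a parity obstruction blocks the construction and the local arrangement itself fails to be free, which by Theorem \ref{theorem: Multilocalizations are free} would force $\CE(\CA,\mu)$ to be non-free, completing the equivalence.
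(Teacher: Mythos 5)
First, note that the paper itself does not prove this statement: it is quoted verbatim from Yoshinaga \cite[Thm.~2.5]{yoshinaga:extendable}, so your attempt has to be measured against Yoshinaga's original argument. Your overall architecture matches it — Theorem \ref{theorem: ziegler restriction} for ``extendable $\Rightarrow$ free'', then Theorem \ref{Theorem:localFreeness} applied to $H_\infty=\ker(x_{\ell+1})$, with the Ziegler restriction being $(\CA,\mu)$ by Definition \ref{definition: Yoshinga extension}, reducing everything to local freeness along $H_\infty$. But there is a genuine gap in that reduction. Your claim ``since $\CA$ is locally $A_2$, the projection of $x$ to $V$ lies on at most three hyperplanes of $\CA$'' is false: the locally $A_2$ condition only bounds rank-\emph{two} localizations. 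A point $x\in H_\infty\setminus\{0\}$ may lie on a flat of $\CA$ of codimension $3$ or more (already for $\CA=\CA_{P_4}$, which is locally $A_2$ of rank $4$, a point on a rank-$3$ flat lies on six hyperplanes), in which case $\CE(\CA,\mu)_x$ has rank $\geq 4$ and is covered by neither of your two cases. The missing idea is an induction on rank: one checks $\CE(\CA,\mu)_x=\CE(\CA_x,\mu_x)$ up to a trivial factor, observes that $(\CA_x,\mu_x)$ is free by Theorem \ref{theorem: Multilocalizations are free} and that $\CA_x$ is again locally $A_2$ with a positive system and $\mu_x$ inherits \eqref{eq:star}, so the inductive hypothesis yields freeness of all higher-rank localizations along $H_\infty$; only then do your two rank-two base cases carry the load. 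This induction is exactly how Yoshinaga organizes the proof, and without it your verification of hypothesis (a) of Theorem \ref{Theorem:localFreeness} fails for $\ell\geq 3$.

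Two further points. In the case $|\CA_y|=2$ your justification is wrong in detail: $\CE(\CA,\mu)_x$ is then the genuinely rank-$3$ arrangement $x_3\prod_a(x_1-ax_3)\prod_b(x_2-bx_3)$, not something that ``factors through a rank-2 piece'', so \cite[Cor.~7]{ziegler:multiarrangements} does not apply; the conclusion is still true because this arrangement is supersolvable (modular flag through the pencil containing $x_3$ and all $x_1-ax_3$), or by a short addition--deletion argument. Finally, the genuinely hard step — that the rank-$3$ deformed $A_2$ arrangement built from the balanced integer intervals of Definition \ref{definition: Yoshinga extension} is free precisely when the parities of $(\mu(H_i),\mu(H_j),\mu(H_k))$ satisfy \eqref{eq:star} — is left at the level of an ansatz; this is the actual content of \cite[Lem.~3.1]{yoshinaga:extendable} and requires careful interval bookkeeping of the kind the present paper carries out, for the deleted analogue, in Lemma \ref{lemma: localFreenessWithoutStar}. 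Your closing remark that failure of \eqref{eq:star} forces non-freeness of the local arrangement is not needed for the theorem as stated, which assumes \eqref{eq:star} throughout.
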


Note that a free multiarrangement $(\CA,\mu)$ need not be extendable in general, 
\cite[Ex.~2.8]{yoshinaga:extendable}.

\subsection{Locally and globally mixed products}
In \cite{abeteraowakefield:euler}, 
a very useful tool is introduced 
to determine whether a given multiarrangement $(\CA,\mu)$ fails to be free.

\begin{defn}
\label{def:lmpgmp}
	Let $(\CA,\mu)$ be a multiarrangement in $\mathbb{C}^\ell$. Let $0 \le k \le \ell$. 
	\begin{itemize}
		\item[(i)] If the localization $(\CA_X,\mu_X)$ is free with exponents $\exp(\CA_X,\mu_X)=(e_1^X,e_2^X,\dots, e_k^X)$ for each $X\in L(\CA)_k$, then define the \emph{$k$-th local mixed product} by
		\[\LMP (k)=\sum_{X\in L_k} e_1^X e_2^X\cdots e_k^X.\] 
		\item[(ii)] If $(\CA,\mu)$ is free with exponents $\exp(\CA,\mu)=(e_1,\dots,e_\ell)$, define the \emph{$k$-th global mixed product} by \[\GMP (k)=\sum e_{i_1}e_{i_2}\cdots e_{i_k},\] where the sum is taken over all $k$-tuples from $\exp(\CA,\mu)$ with $1\leq i_1<\dots<i_k\leq \ell$.
	\end{itemize}
\end{defn}

Thanks to Remark \ref{rem:rank2indfree}, $\LMP (2)$ is always defined. These invariants are rather useful for proving the non-freeness of a given multiarrangement due to the following result.

\begin{theorem}[{\cite[Cor.~4.6]{abeteraowakefield:multichapol}}]
	\label{MixedEquality}
	If $(\CA,\mu)$ is free with $\exp(\CA,\mu)=(e_1,\dots, e_\ell)$ then for every $0\leq k\leq \ell$, we have 
	\[\GMP (k)=\LMP (k).\]
\end{theorem}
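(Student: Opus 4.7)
The plan is to deduce the equality $\GMP(k) = \LMP(k)$ from the multi-characteristic polynomial machinery of Abe--Terao--Wakefield. For any multiarrangement $(\CA,\mu)$ of rank $\ell$, let $\chi(\CA,\mu;t) = \sum_{k=0}^{\ell} (-1)^k c_k(\CA,\mu)\, t^{\ell-k}$ denote the characteristic polynomial of $(\CA,\mu)$, defined in \cite{abeteraowakefield:multichapol} via the Hilbert series of $D(\CA,\mu)$. The argument rests on two pillars: a factorization theorem for the free case, and a Whitney-type local-to-global decomposition of the coefficients $c_k$.

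First, I would invoke the factorization theorem for free multiarrangements: if $(\CA,\mu)$ is free with $\exp(\CA,\mu) = (d_1,\ldots,d_\ell)$, then
\[
\chi(\CA,\mu;t) \;=\; \prod_{i=1}^{\ell}(t - d_i).
\]
Extracting the coefficient of $t^{\ell-k}$ and comparing signs identifies $c_k(\CA,\mu)$ with the $k$-th elementary symmetric polynomial $e_k(d_1,\ldots,d_\ell) = \GMP(k)$. This expresses one side of the desired equality in terms of $\chi$.

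Next, I would invoke the local-to-global decomposition of $\chi$ established in \cite{abeteraowakefield:multichapol}, namely that for every $0 \leq k \leq \ell$,
\[
c_k(\CA,\mu) \;=\; \sum_{X \in L(\CA)_k} c_k(\CA_X,\mu_X).
\]
By Theorem \ref{theorem: Multilocalizations are free}, every localization $(\CA_X,\mu_X)$ at $X \in L(\CA)_k$ is itself a free multiarrangement of rank $k$. Applying the factorization theorem to each $(\CA_X,\mu_X)$, with exponents $(d_1^X,\ldots,d_k^X)$, gives $\chi(\CA_X,\mu_X;t) = \prod_{i=1}^{k}(t-d_i^X)$, so that its top-degree coefficient satisfies $c_k(\CA_X,\mu_X) = d_1^X \cdots d_k^X$. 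Summing over $X \in L(\CA)_k$ yields $c_k(\CA,\mu) = \LMP(k)$.

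Equating the two expressions for $c_k(\CA,\mu)$ produces the claim $\GMP(k) = \LMP(k)$. The principal obstacle is the second step: because $\chi(\CA,\mu;t)$ is defined algebraically (via the Hilbert series of $D(\CA,\mu)$) rather than combinatorially by a Möbius function on $L(\CA)$, the Whitney decomposition of its coefficients does not follow formally from classical arrangement theory. One must instead analyze how the derivation module and its Hilbert series interact with localization; the key technical input is that, at each rank-$k$ flat $X$, the top coefficient of $\chi(\CA_X,\mu_X;t)$ can be isolated via the factorization theorem, and the contributions from different flats assemble correctly — once this Whitney-type identity is in hand, the rest of the proof is a direct comparison of coefficients.
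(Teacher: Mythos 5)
The paper gives no proof of this statement --- it is imported verbatim from \cite[Cor.~4.6]{abeteraowakefield:multichapol} --- so the benchmark is the derivation in that source, and your reconstruction matches it in structure: the factorization theorem identifies the coefficients of $\chi(\CA,\mu;t)$ with $\GMP(k)$, the local--global (Whitney-type) decomposition identifies the same coefficients with sums of local top coefficients, and freeness of localizations (Theorem \ref{theorem: Multilocalizations are free}) converts the latter into $\LMP(k)$. One inaccuracy is worth fixing: in \cite{abeteraowakefield:multichapol} the characteristic polynomial is defined not from the Hilbert series of $D(\CA,\mu)$ alone, but from the Hilbert series of the whole family of modules of logarithmic forms $\Omega^p(\CA,\mu)$ for $0\le p\le \ell$, packaged into a two-variable function $\psi(\CA,\mu;t,q)$ and evaluated in a limit $q\to 1$; correspondingly, the factorization step also uses that freeness of $\Omega^1(\CA,\mu)$ forces each $\Omega^p(\CA,\mu)\cong\bigwedge^p\Omega^1(\CA,\mu)$ to be free, so a literal execution of your first step from $D(\CA,\mu)$ alone would not even produce $\chi$. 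Your closing caveat is accurate: the Whitney-type identity is the substantive input and does not follow from classical M\"obius-function arguments, since $\chi(\CA,\mu;t)$ is not combinatorially defined --- but it is precisely the locality theorem established in \cite{abeteraowakefield:multichapol}, so invoking it is legitimate here, with the understanding that the resulting proof is, like the original, an assembly of that paper's two main theorems rather than an independent argument.
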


\begin{defn}
\label{def:lmp}
For $1 \le i \le \ell$, let $e_i \in \mathbb{N}$. We use the shorthand $(e_1, \ldots , e_\ell)_\le \in \mathbb{N}^\ell$ for $e_1 \le e_2 \le \ldots \le e_\ell$. 
Then for the latter define 
the \emph{$k$-th global mixed product} by \[\GMP (k) :=\sum e_{i_1}e_{i_2}\cdots e_{i_k},\]
where the sum is taken over all $k$-tuples from $(e_1, \ldots , e_\ell)_\le$ with $1\leq i_1<\dots<i_k\leq \ell$.
\end{defn}

\begin{remark}
	\label{rem:GMP_LMP_balanced}
Following \cite{abeteraowakefield:multichapol}, given two ordered $\ell$-tuples of non-negative integers, $(d_1,d_2,\dots,d_\ell)_\le$ and $(e_1,e_2,\dots,e_\ell)_\le$ as above satisfying $\sum d_i=\sum e_i$, we call $(d_1,d_2,\dots,d_\ell)_\le$ \emph{more balanced} than $(e_1,e_2,\dots,e_\ell)_\le$ if $d_\ell-d_1<e_\ell-e_1$. 
It follows from \cite[p.836]{abeteraowakefield:multichapol}
that if $(d_1,d_2,\dots,d_\ell)_\le$ is more balanced than $(e_1,e_2,\dots,e_\ell)_\le$, then $\GMP (k)$ is larger for $(d_1,d_2,\dots,d_\ell)_\le$ than for $(e_1,e_2,\dots,e_\ell)_\le$.
\end{remark}

\begin{example}
    With notation as in Definition \ref{def:lmp} let $S_1=(4,4,4,4)_{\leq}$ and $S_2=(2,4,5,5)_{\leq}$. Then $S_1$ is more balanced than $S_2$ since $4-4 < 5-2$. Among all quadruples $(d_1,d_2,d_3,d_4)_{\leq}$ with $\sum d_i=16$, we see that $S_1$ is most balanced.    
\end{example}

The following is immediate from Theorem \ref{MixedEquality} and
Remark \ref{rem:GMP_LMP_balanced}. With respect to part (ii) of the following lemma, recall that  $1\in\exp(\CA)$ if $\CA$ is free.

\begin{lemma}
	\label{lemma:LMP_GMP_nonfree}
 Let $\CA$ be a simple arrangement in $\mathbb{K}^\ell$. 
\begin{itemize}
    \item [(i)]  Let $\mu$ be a multiplicity on $\CA$ and let $(e_1,e_2,\dots,e_\ell)_\leq$ be the unique most balanced ordered $\ell$-tuple 
	of integers with sum equal to $\vert\mu\vert$. If the $\GMP(2)$ for $(e_1,\ldots,e_\ell)_\leq$ is smaller than the $\LMP (2)$ for $(\CA,\mu)$, then $(\CA,\mu)$ is not free.
    \item [(ii)]  Let
	$(e_2,\dots,e_\ell)_\leq$ be the unique most balanced ordered $\ell$-tuple
	of integers with sum equal to $\vert\CA\vert-1$. If the $\GMP(2)$ for $(1,e_2,\ldots,e_\ell)_\leq$ is smaller than the $\LMP (2)$ for $\CA$, then $\CA$ is not free.
\end{itemize}
    \end{lemma}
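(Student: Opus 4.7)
The plan is to prove both parts by contraposition using Theorem \ref{MixedEquality} together with the maximality property of balanced sequences recalled in Remark \ref{rem:GMP_LMP_balanced}.

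For part (i), I would assume $(\CA,\mu)$ is free with exponents $\exp(\CA,\mu) = (d_1,\ldots,d_\ell)_\leq$. Then $\sum d_i = |\mu|$, since the total degree of a basis of $D(\CA,\mu)$ equals the degree of $Q(\CA,\mu)$. Theorem \ref{MixedEquality} applied at $k=2$ yields $\GMP(2) = \LMP(2)$, where $\GMP(2)$ is formed from $(d_1,\ldots,d_\ell)_\leq$. By Remark \ref{rem:GMP_LMP_balanced}, among all nondecreasing sequences of nonnegative integers of length $\ell$ with fixed sum $|\mu|$, the unique most balanced sequence $(e_1,\ldots,e_\ell)_\leq$ maximizes $\GMP(2)$. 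Hence $\GMP(2)$ computed from $(e_1,\ldots,e_\ell)_\leq$ is at least $\GMP(2)$ computed from $(d_1,\ldots,d_\ell)_\leq$, which equals $\LMP(2)$. This contradicts the hypothesis that $\GMP(2)$ for $(e_1,\ldots,e_\ell)_\leq$ is strictly smaller than $\LMP(2)$.

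For part (ii), the argument is the same, but I first need to note that freeness of the simple arrangement $\CA$ forces $1 \in \exp(\CA)$ since the Euler derivation $\sum x_i \partial_i$ always lies in $D(\CA)$ and is part of any homogeneous basis. Writing the exponents as $(1,d_2,\ldots,d_\ell)_\leq$, their sum is $|\CA|$, so $d_2+\cdots+d_\ell = |\CA|-1$. Again by Remark \ref{rem:GMP_LMP_balanced}, among all sequences $(1,f_2,\ldots,f_\ell)_\leq$ with $\sum_{i\geq 2} f_i = |\CA|-1$, the choice $(1,e_2,\ldots,e_\ell)_\leq$ with $(e_2,\ldots,e_\ell)_\leq$ most balanced maximizes $\GMP(2)$ (the difference $\GMP(2) - \sum_{i\geq 2} f_i$ depends only on the pairwise products among the $f_i$, and balancing the $f_i$ maximizes this by the same argument). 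Theorem \ref{MixedEquality} then gives the contradiction as before.

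The proof is essentially a one-line application of the two cited results; the only mild subtlety is verifying in part (ii) that fixing the first exponent to $1$ does not disturb the balancing argument, which is clear because the linear contribution $\sum_{i\geq 2} 1\cdot e_i$ of the Euler exponent to $\GMP(2)$ is determined by the fixed total $|\CA|-1$, leaving only the pairwise products among $e_2,\ldots,e_\ell$ to be maximized by balancing.
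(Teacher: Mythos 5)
Your proof is correct and takes essentially the same route as the paper, which states the lemma is immediate from Theorem \ref{MixedEquality} (freeness forces $\GMP(2)=\LMP(2)$, computed from exponents whose sum is $|\mu|$ resp.\ $|\CA|$ by Ziegler's Saito criterion) combined with the maximality of $\GMP(2)$ at the unique most balanced sequence from Remark \ref{rem:GMP_LMP_balanced}. Your additional details in part (ii) --- the Euler derivation forcing $1\in\exp(\CA)$, and the observation that the fixed exponent $1$ contributes only the constant $|\CA|-1$ to $\GMP(2)$ so that balancing the remaining entries still maximizes it --- simply make explicit what the paper leaves implicit.
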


\section{Free connected subgraph multiarrangements}
\label{sect:FreeConnSubgraphMulti}
The goal of this section is a generalization of 
Theorem \ref{theorem: FreeConnectedSubgraphArrangements}, where we give a list of all graphs $G$ for which there exists a multiplicity $\mu:\CA\to\mathbb{Z}_{>0}$
such that $(\CA,\mu)$ is free. 
As mentioned before, Cuntz and Kühne derive Theorem \ref{theorem: FreeConnectedSubgraphArrangements} by showing that any graph not listed in 
Definition \ref{definition: cuntz kühne graph families} can get manipulated by graph contractions from Definition \ref{def:localag} such that the resulting graph possesses an induced subgraph that affords a non-free arrangement. 
Unexpectedly, it turns out that free multiplicities do exist for $\CA_{G_1}$ and $\CA_{G_2}$. 
Thus, in order to follow a similar line of argument, as the one above, we need to expand the list of graphs 
we have to consider which are shown in Figure \ref{fig:G9_20}.

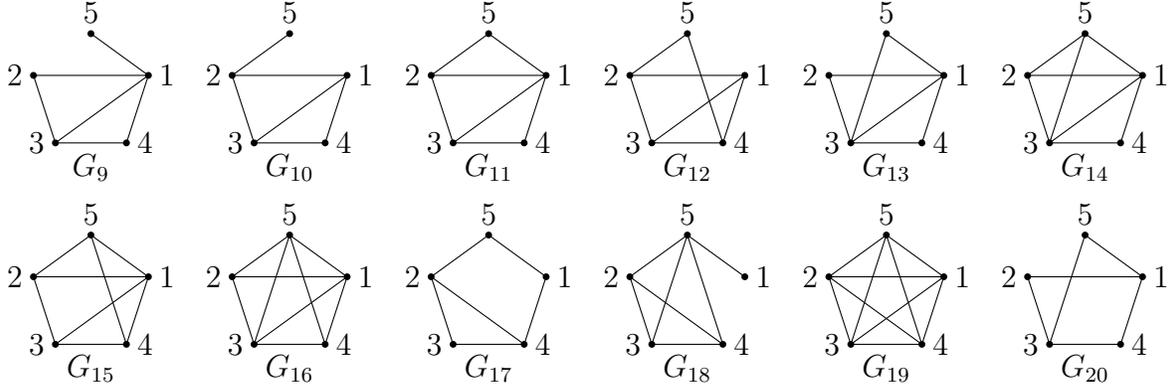
\begin{figure}[h]
	\def \sc {0.65}
	\begin{tikzpicture}[scale=\sc]		
		\node (a1) at (1.1755705045849463,0.38196601125010554) {};
		\node (a2) at (-1.1755705045849463,0.38196601125010554) {};
		\node (a3) at (-0.72654252800536101,-1) {};
		\node (a4) at (0.72654252800536101,-1) {};
		\node (a5) at (0.0,1.2360679774997894) {};
		\foreach \from/\to in {a1/a2,a2/a3,a3/a4,a1/a4,a1/a3,a1/a5}
		\draw ($(\from)$) -- ($(\to)$);
		\node[right] at ($(a1)$) {$1$};
		\node[left] at ($(a2)$) {$2$};
		\node[left] at ($(a3)$) {$3$};
		\node[right] at ($(a4)$) {$4$};
		\node[above] at ($(a5)$) {$5$};
		\fill ($(a1)$) circle[radius=2pt];
		\fill ($(a2)$) circle[radius=2pt];
		\fill ($(a3)$) circle[radius=2pt];
		\fill ($(a4)$) circle[radius=2pt];
		\fill ($(a5)$) circle[radius=2pt];
		\node at (0,-1.5) {$G_9$};
	\end{tikzpicture}
	\begin{tikzpicture}[scale=\sc]		
		\node (a1) at (1.1755705045849463,0.38196601125010554) {};
		\node (a2) at (-1.1755705045849463,0.38196601125010554) {};
		\node (a3) at (-0.72654252800536101,-1) {};
		\node (a4) at (0.72654252800536101,-1) {};
		\node (a5) at (0.0,1.2360679774997894) {};
		\foreach \from/\to in {a1/a2,a2/a3,a3/a4,a1/a4,a1/a3,a2/a5}
		\draw ($(\from)$) -- ($(\to)$);
		\node[right] at ($(a1)$) {$1$};
		\node[left] at ($(a2)$) {$2$};
		\node[left] at ($(a3)$) {$3$};
		\node[right] at ($(a4)$) {$4$};
		\node[above] at ($(a5)$) {$5$};
		\fill ($(a1)$) circle[radius=2pt];
		\fill ($(a2)$) circle[radius=2pt];
		\fill ($(a3)$) circle[radius=2pt];
		\fill ($(a4)$) circle[radius=2pt];
		\fill ($(a5)$) circle[radius=2pt];
		\node at (0,-1.5) {$G_{10}$};
	\end{tikzpicture}
	\begin{tikzpicture}[scale=\sc]		
		\node (a1) at (1.1755705045849463,0.38196601125010554) {};
		\node (a2) at (-1.1755705045849463,0.38196601125010554) {};
		\node (a3) at (-0.72654252800536101,-1) {};
		\node (a4) at (0.72654252800536101,-1) {};
		\node (a5) at (0.0,1.2360679774997894) {};
		\foreach \from/\to in {a1/a2,a2/a3,a3/a4,a1/a4,a1/a3,a2/a5,a1/a5}
		\draw ($(\from)$) -- ($(\to)$);
		\node[right] at ($(a1)$) {$1$};
		\node[left] at ($(a2)$) {$2$};
		\node[left] at ($(a3)$) {$3$};
		\node[right] at ($(a4)$) {$4$};
		\node[above] at ($(a5)$) {$5$};
		\fill ($(a1)$) circle[radius=2pt];
		\fill ($(a2)$) circle[radius=2pt];
		\fill ($(a3)$) circle[radius=2pt];
		\fill ($(a4)$) circle[radius=2pt];
		\fill ($(a5)$) circle[radius=2pt];
		\node at (0,-1.5) {$G_{11}$};
	\end{tikzpicture}
	\begin{tikzpicture}[scale=\sc]		
		\node (a1) at (1.1755705045849463,0.38196601125010554) {};
		\node (a2) at (-1.1755705045849463,0.38196601125010554) {};
		\node (a3) at (-0.72654252800536101,-1) {};
		\node (a4) at (0.72654252800536101,-1) {};
		\node (a5) at (0.0,1.2360679774997894) {};
		\foreach \from/\to in {a1/a2,a2/a3,a3/a4,a1/a4,a1/a3,a2/a5,a4/a5}
		\draw ($(\from)$) -- ($(\to)$);
		\node[right] at ($(a1)$) {$1$};
		\node[left] at ($(a2)$) {$2$};
		\node[left] at ($(a3)$) {$3$};
		\node[right] at ($(a4)$) {$4$};
		\node[above] at ($(a5)$) {$5$};
		\fill ($(a1)$) circle[radius=2pt];
		\fill ($(a2)$) circle[radius=2pt];
		\fill ($(a3)$) circle[radius=2pt];
		\fill ($(a4)$) circle[radius=2pt];
		\fill ($(a5)$) circle[radius=2pt];
		\node at (0,-1.5) {$G_{12}$};
	\end{tikzpicture}
	\begin{tikzpicture}[scale=\sc]		
		\node (a1) at (1.1755705045849463,0.38196601125010554) {};
		\node (a2) at (-1.1755705045849463,0.38196601125010554) {};
		\node (a3) at (-0.72654252800536101,-1) {};
		\node (a4) at (0.72654252800536101,-1) {};
		\node (a5) at (0.0,1.2360679774997894) {};
		\foreach \from/\to in {a1/a2,a2/a3,a3/a4,a1/a4,a1/a3,a1/a5,a3/a5}
		\draw ($(\from)$) -- ($(\to)$);
		\node[right] at ($(a1)$) {$1$};
		\node[left] at ($(a2)$) {$2$};
		\node[left] at ($(a3)$) {$3$};
		\node[right] at ($(a4)$) {$4$};
		\node[above] at ($(a5)$) {$5$};
		\fill ($(a1)$) circle[radius=2pt];
		\fill ($(a2)$) circle[radius=2pt];
		\fill ($(a3)$) circle[radius=2pt];
		\fill ($(a4)$) circle[radius=2pt];
		\fill ($(a5)$) circle[radius=2pt];
		\node at (0,-1.5) {$G_{13}$};
	\end{tikzpicture}
	\begin{tikzpicture}[scale=\sc]		
		\node (a1) at (1.1755705045849463,0.38196601125010554) {};
		\node (a2) at (-1.1755705045849463,0.38196601125010554) {};
		\node (a3) at (-0.72654252800536101,-1) {};
		\node (a4) at (0.72654252800536101,-1) {};
		\node (a5) at (0.0,1.2360679774997894) {};
		\foreach \from/\to in {a1/a2,a2/a3,a3/a4,a1/a4,a1/a3,a2/a5,a1/a5,a3/a5}
		\draw ($(\from)$) -- ($(\to)$);
		\node[right] at ($(a1)$) {$1$};
		\node[left] at ($(a2)$) {$2$};
		\node[left] at ($(a3)$) {$3$};
		\node[right] at ($(a4)$) {$4$};
		\node[above] at ($(a5)$) {$5$};
		\fill ($(a1)$) circle[radius=2pt];
		\fill ($(a2)$) circle[radius=2pt];
		\fill ($(a3)$) circle[radius=2pt];
		\fill ($(a4)$) circle[radius=2pt];
		\fill ($(a5)$) circle[radius=2pt];
		\node at (0,-1.5) {$G_{14}$};
	\end{tikzpicture}
	\begin{tikzpicture}[scale=\sc]		
		\node (a1) at (1.1755705045849463,0.38196601125010554) {};
		\node (a2) at (-1.1755705045849463,0.38196601125010554) {};
		\node (a3) at (-0.72654252800536101,-1) {};
		\node (a4) at (0.72654252800536101,-1) {};
		\node (a5) at (0.0,1.2360679774997894) {};
		\foreach \from/\to in {a1/a2,a2/a3,a3/a4,a1/a4,a1/a3,a2/a5,a1/a5,a4/a5}
		\draw ($(\from)$) -- ($(\to)$);
		\node[right] at ($(a1)$) {$1$};
		\node[left] at ($(a2)$) {$2$};
		\node[left] at ($(a3)$) {$3$};
		\node[right] at ($(a4)$) {$4$};
		\node[above] at ($(a5)$) {$5$};
		\fill ($(a1)$) circle[radius=2pt];
		\fill ($(a2)$) circle[radius=2pt];
		\fill ($(a3)$) circle[radius=2pt];
		\fill ($(a4)$) circle[radius=2pt];
		\fill ($(a5)$) circle[radius=2pt];
		\node at (0,-1.5) {$G_{15}$};
	\end{tikzpicture}
	\begin{tikzpicture}[scale=\sc]		
		\node (a1) at (1.1755705045849463,0.38196601125010554) {};
		\node (a2) at (-1.1755705045849463,0.38196601125010554) {};
		\node (a3) at (-0.72654252800536101,-1) {};
		\node (a4) at (0.72654252800536101,-1) {};
		\node (a5) at (0.0,1.2360679774997894) {};
		\foreach \from/\to in {a1/a2,a2/a3,a3/a4,a1/a4,a1/a3,a2/a5,a1/a5,a3/a5,a4/a5}
		\draw ($(\from)$) -- ($(\to)$);
		\node[right] at ($(a1)$) {$1$};
		\node[left] at ($(a2)$) {$2$};
		\node[left] at ($(a3)$) {$3$};
		\node[right] at ($(a4)$) {$4$};
		\node[above] at ($(a5)$) {$5$};
		\fill ($(a1)$) circle[radius=2pt];
		\fill ($(a2)$) circle[radius=2pt];
		\fill ($(a3)$) circle[radius=2pt];
		\fill ($(a4)$) circle[radius=2pt];
		\fill ($(a5)$) circle[radius=2pt];
		\node at (0,-1.5) {$G_{16}$};
	\end{tikzpicture}
	\begin{tikzpicture}[scale=\sc]		
		\node (a1) at (1.1755705045849463,0.38196601125010554) {};
		\node (a2) at (-1.1755705045849463,0.38196601125010554) {};
		\node (a3) at (-0.72654252800536101,-1) {};
		\node (a4) at (0.72654252800536101,-1) {};
		\node (a5) at (0.0,1.2360679774997894) {};
		\foreach \from/\to in {a1/a4,a2/a3,a3/a4,a2/a5,a2/a4,a1/a5}
		\draw ($(\from)$) -- ($(\to)$);
		\node[right] at ($(a1)$) {$1$};
		\node[left] at ($(a2)$) {$2$};
		\node[left] at ($(a3)$) {$3$};
		\node[right] at ($(a4)$) {$4$};
		\node[above] at ($(a5)$) {$5$};
		\fill ($(a1)$) circle[radius=2pt];
		\fill ($(a2)$) circle[radius=2pt];
		\fill ($(a3)$) circle[radius=2pt];
		\fill ($(a4)$) circle[radius=2pt];
		\fill ($(a5)$) circle[radius=2pt];
		\node at (0,-1.5) {$G_{17}$};
	\end{tikzpicture}
	\begin{tikzpicture}[scale=\sc]		
		\node (a1) at (1.1755705045849463,0.38196601125010554) {};
		\node (a2) at (-1.1755705045849463,0.38196601125010554) {};
		\node (a3) at (-0.72654252800536101,-1) {};
		\node (a4) at (0.72654252800536101,-1) {};
		\node (a5) at (0.0,1.2360679774997894) {};
		\foreach \from/\to in {a2/a3,a3/a4,a2/a4,a3/a5,a2/a5,a1/a5,a4/a5}
		\draw ($(\from)$) -- ($(\to)$);
		\node[right] at ($(a1)$) {$1$};
		\node[left] at ($(a2)$) {$2$};
		\node[left] at ($(a3)$) {$3$};
		\node[right] at ($(a4)$) {$4$};
		\node[above] at ($(a5)$) {$5$};
		\fill ($(a1)$) circle[radius=2pt];
		\fill ($(a2)$) circle[radius=2pt];
		\fill ($(a3)$) circle[radius=2pt];
		\fill ($(a4)$) circle[radius=2pt];
		\fill ($(a5)$) circle[radius=2pt];
		\node at (0,-1.5) {$G_{18}$};
	\end{tikzpicture}
	\begin{tikzpicture}[scale=\sc]		
		\node (a1) at (1.1755705045849463,0.38196601125010554) {};
		\node (a2) at (-1.1755705045849463,0.38196601125010554) {};
		\node (a3) at (-0.72654252800536101,-1) {};
		\node (a4) at (0.72654252800536101,-1) {};
		\node (a5) at (0.0,1.2360679774997894) {};
		\foreach \from/\to in {a1/a2,a1/a3,a1/a4,a1/a5,a2/a3,a2/a4,a2/a5,a3/a4,a3/a5,a4/a5}
		\draw ($(\from)$) -- ($(\to)$);
		\node[right] at ($(a1)$) {$1$};
		\node[left] at ($(a2)$) {$2$};
		\node[left] at ($(a3)$) {$3$};
		\node[right] at ($(a4)$) {$4$};
		\node[above] at ($(a5)$) {$5$};
		\fill ($(a1)$) circle[radius=2pt];
		\fill ($(a2)$) circle[radius=2pt];
		\fill ($(a3)$) circle[radius=2pt];
		\fill ($(a4)$) circle[radius=2pt];
		\fill ($(a5)$) circle[radius=2pt];
		\node at (0,-1.5) {$G_{19}$};
	\end{tikzpicture}
	\begin{tikzpicture}[scale=\sc]		
		\node (a1) at (1.1755705045849463,0.38196601125010554) {};
		\node (a2) at (-1.1755705045849463,0.38196601125010554) {};
		\node (a3) at (-0.72654252800536101,-1) {};
		\node (a4) at (0.72654252800536101,-1) {};
		\node (a5) at (0.0,1.2360679774997894) {};
		\foreach \from/\to in {a1/a2,a1/a4,a1/a5,a2/a3,a3/a4,a3/a5}
		\draw ($(\from)$) -- ($(\to)$);
		\node[right] at ($(a1)$) {$1$};
		\node[left] at ($(a2)$) {$2$};
		\node[left] at ($(a3)$) {$3$};
		\node[right] at ($(a4)$) {$4$};
		\node[above] at ($(a5)$) {$5$};
		\fill ($(a1)$) circle[radius=2pt];
		\fill ($(a2)$) circle[radius=2pt];
		\fill ($(a3)$) circle[radius=2pt];
		\fill ($(a4)$) circle[radius=2pt];
		\fill ($(a5)$) circle[radius=2pt];
		\node at (0,-1.5) {$G_{20}$};
	\end{tikzpicture}
	\caption{Graphs $G_9$ to $G_{20}$.}\label{fig:G9_20}
\end{figure}

We begin with the following observation.
		
\begin{proposition}\label{proposition: G3 to G19 totally non free}
	Let $G$ be one of the graphs $G_3$ to $G_{20}$ listed in Figures \ref{fig:G1_8} and \ref{fig:G9_20}. Then the corresponding connected subgraph arrangement $\CA_G$ is totally non-free.
\end{proposition}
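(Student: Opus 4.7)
The plan is to prove Proposition \ref{proposition: G3 to G19 totally non free} by combining a propagation-via-localization argument with direct verification on a handful of base cases.

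\textbf{Propagation step.} By Theorem \ref{theorem: Multilocalizations are free}, if $(\CA_G,\mu)$ were free then $((\CA_G)_X,\mu_X)$ would be free for every $X\in L(\CA_G)$, and $\mu_X\geq \one$ whenever $\mu\geq \one$. Hence total non-freeness of a localization forces total non-freeness of the ambient arrangement. Coupled with Lemma \ref{lemma:ind_subgraph contract localization}, if for some vertex subset $S$ or edge $e$ the arrangement $\CA_{G[S]}$ or $\CA_{G/e}$ is totally non-free, then so is $\CA_G$. The first thing I would do is arrange $G_3,\ldots,G_{20}$ into a reduction poset under ``contains a smaller $G_j$ from the list as an induced subgraph or edge contraction'', tabulating the reductions in an appendix table, and identifying the minimal elements to be treated by hand.

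\textbf{Base cases.} For the minimal graphs in this poset, I would apply the local-mixed-product inequality of Lemma \ref{lemma:LMP_GMP_nonfree}(i). Given any $\mu\geq \one$, the rank-two flats are enumerated via the disjoint-union criterion of Lemma \ref{lemma: connected subgraph arrangements are locallyA2}, so $\LMP(2)$ becomes an explicit quadratic polynomial in the entries of $\mu$, with each rank-two flat of localized size three contributing three symmetric cross-terms. I would compare this with $\GMP(2)$ evaluated on the unique most-balanced partition of $|\mu|$ into $\ell$ non-negative parts, which by Remark \ref{rem:GMP_LMP_balanced} is the maximum possible $\GMP(2)$ over exponent sequences of the correct total weight. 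Establishing the strict inequality $\LMP(2)>\GMP(2)$ for all $\mu\geq \one$ then precludes freeness. For any base case where the inequality cannot be pushed through uniformly in $\mu$, I would fall back on Theorem \ref{theorem: Yoshinaga generic not free} applied to a generic sub-localization of rank at least three in $\CA_{G_i}$.

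\textbf{Main obstacle.} Two delicate points are foreseeable. First, in the reduction step, several natural minors of the $G_i$ (for instance $K_{1,3}=A_{3,2}$ or $K_4=G_2$) are \emph{not} totally non-free, as they support free multiplicities by Theorem \ref{theorem: FreeConnectedSubgraphArrangements} and Theorem \ref{thm:Main1}; the reduction must therefore be chosen carefully so as to land inside $\{G_3,\ldots,G_{20}\}$. Second, in the $\LMP/\GMP$ step, extending the strict inequality from $\mu=\one$ to all $\mu\geq \one$ requires a monotonicity analysis: the increment of $\LMP(2)$ under $\mu(H)\mapsto\mu(H)+1$ must match or exceed the increment of the maximal $\GMP(2)$, which grows roughly like $|\mu|(1-1/\ell)$. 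This pointwise monotonicity is the chief technical hurdle, and whenever it fails for a particular hyperplane one must supplement with a generic-localization argument via Theorem \ref{theorem: Yoshinaga generic not free}.
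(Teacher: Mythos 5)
Your propagation step is sound and matches the paper's logic (it is exactly how the paper's Corollary \ref{corollary: G3 subgraph not free} propagates total non-freeness), but your main tool for the base cases cannot work, and this is a genuine gap rather than a technical hurdle to be overcome. The inequality $\LMP(2)>\GMP(2)$ (with $\GMP(2)$ evaluated on the most balanced sequence) can never hold uniformly over all $\mu\geq\one$, for \emph{any} of the base graphs: take $\mu$ concentrated on a single hyperplane, $\mu(H_0)=M$ and $\mu\equiv 1$ elsewhere. Then every rank-two localization containing $H_0$ has exponents $(k,M)$ with $k$ bounded (by Theorem \ref{WakaTheo}(ii)), so $\LMP(2)=O(M)$, whereas the most balanced sequence with sum $|\mu|=M+O(1)$ yields $\GMP(2)\approx\binom{\ell}{2}(M/\ell)^2=\Theta(M^2)$. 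So for unbalanced multiplicities the criterion of Lemma \ref{lemma:LMP_GMP_nonfree}(i) is always inconclusive — this is not a monotonicity issue you can repair, since the balanced $\GMP(2)$ grows quadratically in $|\mu|$ while $\LMP(2)$ grows only linearly in each individual multiplicity. (Note the paper itself only uses the $\LMP/\GMP$ comparison for \emph{constant} multiplicities in \S\ref{sect:FreeConstantMultiplicities}, where $|\mu|$ and the local exponents scale together.)

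Consequently what you relegate to a ``fallback'' must carry the entire proof, and that fallback is precisely the paper's argument: for each of $G_3,\dots,G_{20}$ one exhibits an explicit flat $X\in L(\CA_G)$ such that $(\CA_G)_X$ is generic with $\rank(\CA_G)_X<|(\CA_G)_X|$ and $\rank \geq 3$ (e.g.\ $X=H_1\cap H_{123}\cap H_{145}$ for $G_3$, giving the generic rank-$3$ arrangement $\{H_1,H_{123},H_{145},H_{12345}\}$; for $G_7$ and $G_8$ the paper needs generic localizations of rank $4$ and $5$ respectively). Total non-freeness then follows from Theorem \ref{theorem: Yoshinaga generic not free} together with Theorem \ref{theorem: Multilocalizations are free}, with no $\LMP/\GMP$ computation at all. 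To complete your proof you would need to produce such generic flats for every minimal graph in your reduction poset — the concrete combinatorial content your proposal omits — at which point the reduction poset itself buys little, since a one-line table entry per graph settles all eighteen cases directly.
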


\begin{proof}
	For each $G$ as in the statement above, we present an element $X\in L(\CA_G)$ in Table \ref{table: G3 to G20 totally non-free}  
	such that the corresponding localization $(\CA_G)_X$ is a generic arrangement with $\rank (\CA_G)_X<\vert(\CA_G)_X\vert$. 
	Consequently, owing to Theorems \ref{theorem: Multilocalizations are free} and \ref{theorem: Yoshinaga generic not free} we conclude that $\CA_G$ is totally non-free. 
\end{proof}

	\begin{table}[h]
	\centering
		\noindent\begin{tabular}{cccc}
			$G$ & $X$ & $\rank X$ & $(\CA_{G})_X$ \\
			\hline
			$G_3$ & $H_1\cap H_{123}\cap H_{145}$ & $3$ & $\{H_{1},H_{123},H_{145},H_{12345}\}$ \\
			$G_4$ & $H_1\cap H_{124}\cap H_{135}$ & $3$ & $\{H_{1},H_{124},H_{135},H_{12345}\}$ \\
			$G_5$ & $H_1\cap H_{124}\cap H_{135}$ & $3$ & $\{H_{1},H_{124},H_{135},H_{12345}\}$ \\
			$G_6$ & $H_{12}\cap H_{14}\cap H_{1235}$ & $3$ & $\{H_{12},H_{14},H_{1235},H_{1345}\}$ \\
			$G_7$ & $H_{23}\cap H_{136}\cap H_{1235}\cap H_{1245}$ & $4$ & $\{H_{23},H_{136},H_{1235},H_{1245},H_{12346}\}$ \\
			$G_8$ & $H_{1}\cap H_{146}\cap H_{1245}\cap H_{12367}\cap H_{12467}$ & $5$ & $\{H_{1},H_{146},H_{1245},H_{12367},H_{12467},H_{123456}\}$ \\
			$G_9$ & $H_{1}\cap H_{124}\cap H_{135}$ & $3$ & $\{H_{1},H_{124},H_{135},H_{12345}\}$ \\
			$G_{10}$ & $H_{12}\cap H_{23}\cap H_{1245}$ & $3$ & $\{H_{12},H_{23},H_{1245},H_{2345}\}$ \\
			$G_{11}$ & $H_{13}\cap H_{23}\cap H_{1345}$ & $3$ & $\{H_{13},H_{23},H_{1345},H_{2345}\}$ \\
			$G_{12}$ & $H_{23}\cap H_{34}\cap H_{1235}$ & $3$ & $\{H_{23},H_{34},H_{1235},H_{1345}\}$ \\
			$G_{13}$ & $H_{15}\cap H_{35}\cap H_{1245}$ & $3$ & $\{H_{15},H_{35},H_{1245},H_{2345}\}$ \\
			$G_{14}$ & $H_{15}\cap H_{35}\cap H_{1245}$ & $3$ & $\{H_{15},H_{35},H_{1245},H_{2345}\}$ \\
			$G_{15}$ & $H_{125}\cap H_{135}\cap H_{245}$ & $3$ & $\{H_{125},H_{135},H_{245},H_{345}\}$ \\
			$G_{16}$ & $H_{15}\cap H_{35}\cap H_{1245}$ & $3$ & $\{H_{15},H_{35},H_{1245},H_{2345}\}$ \\
			$G_{17}$ & $H_{24}\cap H_{25}\cap H_{1234}$ & $3$ & $\{H_{24},H_{25},H_{1234},H_{1235}\}$ \\
			$G_{18}$ & $H_{13}\cap H_{14}\cap H_{1235}$ & $3$ & $\{H_{13},H_{14},H_{1235},H_{1245}\}$ \\
			$G_{19}$ & $H_{125}\cap H_{135}\cap H_{245}$ & $3$ & $\{H_{125},H_{135},H_{245},H_{345}\}$ \\
			$G_{20}$ & $H_{15}\cap H_{35}\cap H_{1245}$ & $3$ & $\{H_{15},H_{35},H_{1245},H_{2345}\}$ \\
			\hline\\
		\end{tabular}\caption{Elements $X\in L(\CA_{G_i})$ such that $(\CA_{G_i})_X$ is generic.}\label{table: G3 to G20 totally non-free} 
	\end{table}
			
\begin{corollary}\label{corollary: G3 subgraph not free}
	Let $G$ be a graph that has $G_3$ as a subgraph. Then $\CA_G$ is totally non-free.
\end{corollary}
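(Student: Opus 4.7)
The plan is to reduce the statement to Proposition \ref{proposition: G3 to G19 totally non free} by exhibiting, inside $\CA_G$, a localization $\CA_H$ which is already known to be totally non-free. Since $G$ contains $G_3$ as a subgraph, I can pick five vertices $v_1,\ldots,v_5\in N$ with $v_1$ adjacent to each of $v_2,v_3,v_4,v_5$, and then take $H:=G[\{v_1,\ldots,v_5\}]$ to be the induced subgraph on these five vertices.

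The next step is to classify the possible isomorphism types of $H$. The vertex $v_1$ has degree $4$ in $H$, so the isomorphism type of $H$ is completely determined by that of the induced subgraph $H[\{v_2,v_3,v_4,v_5\}]$ on the four remaining vertices. Running through the eleven isomorphism classes of graphs on four vertices -- the empty graph, one edge, the perfect matching, two adjacent edges, $P_4$, $K_{1,3}$, $K_3\cup K_1$, $C_4$, the paw, $K_4-e$, and $K_4$ -- and matching each up against Figures \ref{fig:G1_8} and \ref{fig:G9_20}, every such $H$ turns out to be isomorphic to one of
\[
G_3,\; G_4,\; G_5,\; G_9,\; G_{11},\; G_{13},\; G_{14},\; G_{15},\; G_{16},\; G_{18},\; G_{19}.
\]
In particular, $\CA_H$ is totally non-free by Proposition \ref{proposition: G3 to G19 totally non free}.

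To finish, I would invoke Lemma \ref{lemma:ind_subgraph contract localization}(i) to identify $\CA_H$ with $(\CA_G)_X$ for some $X\in L(\CA_G)$, and then apply Theorem \ref{theorem: Multilocalizations are free}: any multiplicity $\mu\geq\one$ on $\CA_G$ with $(\CA_G,\mu)$ free would restrict to a free multiplicity $\mu_X\geq\one$ on $\CA_H=(\CA_G)_X$, contradicting the total non-freeness of $\CA_H$.

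The only mildly laborious part is the enumeration step, i.e.\ verifying that each of the eleven possible five-vertex graphs containing $G_3$ as a spanning subgraph does indeed appear in the list above; this is purely combinatorial bookkeeping and poses no conceptual difficulty.
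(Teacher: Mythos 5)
Your proposal is correct and follows essentially the same route as the paper: identify the induced subgraph on the five vertices of the embedded $G_3$ as one of $G_3, G_4, G_5, G_9, G_{11}, G_{13}, G_{14}, G_{15}, G_{16}, G_{18}, G_{19}$, then transfer total non-freeness via Lemma \ref{lemma:ind_subgraph contract localization}(i) and Theorem \ref{theorem: Multilocalizations are free}. The only difference is that you make the enumeration explicit (via the eleven graphs on the four non-central vertices), where the paper simply asserts the list; your bookkeeping is accurate.
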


\begin{proof}
	Let $v_1,v_2,v_3,v_4,v_5$ be the vertices of $G$ included in the subgraph forming $G_3$. The induced subgraph of $G$ on $\{v_1,v_2,v_3,v_4,v_5\}$ 
	is isomorphic to $G_3,$ $G_4,$ $G_5,$ $G_{9},$ $G_{11},$ $G_{13},$ $G_{14},$ $G_{15},$ $G_{16},$ $G_{18}$ or $G_{19}$. 
	Thus, by Proposition \ref{proposition: G3 to G19 totally non free}, the arrangement of this induced subgraph is totally non-free. 
	It then follows from Lemma \ref{lemma:ind_subgraph contract localization} and Theorem \ref{theorem: Multilocalizations are free}
	that $\CA_G$ is totally non-free as well.    
\end{proof}

We are now able to address the main result of this section.

\begin{theorem}\label{theorem: free connected subgrapharrangement rank bigger 4}
	Let $G$ be a connected graph on at least $5$ vertices. Then there exists a multiplicity $\mu \ge \one$ such that the connected subgraph multiarrangement $(\CA_G,\mu)$ is free if and only if $G$ is a path-graph, a cycle-graph, an almost-path-graph, or a path-with-triangle-graph.
		In particular, all other connected graphs with at least $5$ vertices yield totally non-free connected subgraph arrangements.
\end{theorem}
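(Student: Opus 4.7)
The ``if'' direction is immediate from Theorem \ref{theorem: FreeConnectedSubgraphArrangements}: if $G$ is a path, cycle, almost-path, or path-with-triangle graph, then $\CA_G$ itself is free, so the simple multiplicity $\mu = \one$ witnesses the existence assertion.

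For the ``only if'' direction, the plan is to prove the second, stronger sentence of the theorem: every connected $G$ on $n\ge 5$ vertices outside the four listed families yields a totally non-free arrangement $\CA_G$. The argument assembles three ingredients already in place. Proposition \ref{proposition: G3 to G19 totally non free} shows that $\CA_{G'}$ is totally non-free for every $G'\in\{G_3,\dots,G_{20}\}$. Theorem \ref{theorem: Multilocalizations are free} propagates total non-freeness upward, since a multiplicity on $\CA_G$ whose restriction to some localization $(\CA_G)_X$ is not free cannot itself be free. And Lemma \ref{lemma:ind_subgraph contract localization} realizes both $\CA_{G[S]}$ and $\CA_{G/e}$ as localizations of $\CA_G$. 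Thus the theorem reduces to the purely graph-theoretic claim that \emph{every connected $G$ on at least five vertices not of the four listed types admits a sequence of induced-subgraph and edge-contraction operations terminating in one of $G_3,\dots,G_{20}$}.

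To establish this claim I would follow the same reduction scheme that Cuntz and K\"uhne employ in the proof of Theorem \ref{theorem: FreeConnectedSubgraphArrangements}, but refined so that the output is never the four-vertex graph $G_1$ or $G_2$ (which do admit free multiplicities and are therefore useless here). I would split on $\Delta(G)$: if $\Delta(G)\ge 4$, a vertex of degree at least $4$ together with four of its neighbours spans an induced subgraph containing $G_3=K_{1,4}$, and Corollary \ref{corollary: G3 subgraph not free} applies directly. If $\Delta(G)\le 3$ and $G$ is not in any of the four families, the Cuntz--K\"uhne analysis already reduces $G$, via the same operations, to one of $G_1,G_2,G_6,G_7,G_8$; the cases $G_6,G_7,G_8$ are immediate from Proposition \ref{proposition: G3 to G19 totally non free}. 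In the remaining cases the hypothesis $n\ge 5$ provides an extra vertex that should let one postpone or reroute the final contraction so as to halt at one of the subcubic five-vertex obstructions $G_{10},G_{12},G_{17},G_{20}$ from Figure \ref{fig:G9_20}.

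The main obstacle is precisely this last refinement: a careful combinatorial case analysis showing that whenever the Cuntz--K\"uhne procedure would collapse a subcubic $G$ on at least five vertices down to $G_1$ or $G_2$, one can always arrange the reduction to stop one step earlier at a five-vertex obstruction. In practice this amounts to inspecting the small connected subcubic graphs outside the listed families and exhibiting in each case either a suitable induced subgraph or a contraction landing in $\{G_{10},G_{12},G_{17},G_{20}\}$. Once this graph-theoretic reduction is in place, the three ingredients above assemble immediately into the theorem, and the final clause on total non-freeness is recorded as an explicit consequence of the same argument.
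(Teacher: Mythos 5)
Your reduction of the theorem to a purely graph-theoretic claim is exactly the paper's architecture: the ``if'' direction via Theorem \ref{theorem: FreeConnectedSubgraphArrangements}, and the ``only if'' direction via Proposition \ref{proposition: G3 to G19 totally non free}, Theorem \ref{theorem: Multilocalizations are free}, and Lemma \ref{lemma:ind_subgraph contract localization}. But the proof has a genuine gap precisely where you flag ``the main obstacle'': the claim that every connected graph on at least $5$ vertices outside the four families reduces, via induced subgraphs and contractions, to one of $G_3,\dots,G_{20}$ is not an inspection of finitely many small graphs that can be deferred --- it \emph{is} the proof. The paper carries it out by a structural case analysis on the cycle structure of $G$: (1) a vertex of degree $\ge 4$ gives a $G_3$ subgraph; (2) two cycles sharing an edge lead, after adding a fifth vertex or contracting, to one of $G_9,\dots,G_{20}$; (3) two edge-disjoint cycles are contracted until a $G_3$ subgraph appears; (4) a unique cycle of length $\ge 4$ with an attached vertex contracts to one of $G_6,G_{12},G_{15},G_{17},G_{20}$, else $G$ is a cycle graph; (5) a unique triangle yields $G_7$ or forces $G$ to be a path-with-triangle graph; (6) a tree with two degree-$3$ vertices contracts to a degree-$4$ vertex, else $G$ is a path, an almost-path, or contains $G_8$. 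None of this structural work appears in your proposal.

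Moreover, your sketched plan is misdirected in one concrete respect: you propose that subcubic graphs can always be ``rerouted'' to the subcubic five-vertex obstructions $G_{10},G_{12},G_{17},G_{20}$, but this fails, for instance, for two triangles joined by a path. Such a graph is subcubic and outside the four families, yet contractions can never make its two edge-disjoint triangles share an edge, and any induced subgraph containing both triangles has at least $6$ vertices; so it cannot reach any subcubic five-vertex obstruction. The paper handles exactly this situation (its cases 3 and 6) by deliberately contracting the connecting path so as to \emph{create} a vertex of degree $\ge 4$ (giving the bowtie $G_5$, hence a $G_3$ subgraph) and then invoking Corollary \ref{corollary: G3 subgraph not free}. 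So the needed refinement of the Cuntz--K\"uhne reduction is not a matter of ``stopping one step earlier''; it requires the full case analysis on cycles and branch vertices, including routes that leave the subcubic world.
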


\begin{proof}
	By Theorem \ref{theorem: FreeConnectedSubgraphArrangements}, $(\CA_G,\one)$ is free for any graph in the list above.
	
	Conversely, assume that we have a graph $G=(N,E)$ and a multiplicity $\mu \geq \one$ on $\CA_G$ such that $(\CA_G,\mu)$ is free.
	
	1. Assume a vertex of $G$ has at least $4$ neighbors. 

 Then $G_3$ is a subgraph of $G$ and $\CA_G$ is totally non-free, by Corollary \ref{corollary: G3 subgraph not free}. 
	This implies that  all the vertices of $G$ have degree at most $3$.
	
	2. Assume that $G$ has at least $2$ cycles of length at least $3$ that share an edge.
	Since we assume that $\vert N\vert\geq 5$, the graph cannot be equal to $G_1$ or $G_2$. 
	Let $C_1,C_2$ be cycles of $G$ with a shared edge $(v_1,v_2)$, choose $w_1\in C_1,w_2\in C_2$ such that $w_1\not\in C_2,w_2\not\in C_1$. 
	Assume first that both cycles have length $3$. 
	In this case the induced graph on $\{v_1,v_2,w_1,w_2\}$ is of type $G_1$ or $G_2$. 
	Since $\vert N\vert\geq 5$ and $G$ is connected there exists a vertex $v$ that is a neighbor of at least one element of $\{v_1,v_2,w_1,w_2\}$, while not belonging to  either $C_1$ or $C_2$. 
	Taking the induced subgraph on $\{v_1,v_2,w_1,w_2,v\}$ yields one of $G_{9}$ up to $G_{19}$.
	So $\CA_G$ is again totally non-free, thanks to Proposition \ref{proposition: G3 to G19 totally non free}. 
	Assume that at least $C_1$ is of length greater than $3$. 
	Choose $v_1,v_2,w_1,w_2$ as in the first part above and an additional $w_3\in C_1$ and take the induced subgraph on $\{v_1,v_2,w_1,w_2,w_3\}$. 
	Now contract each edge at least one vertex  of which is not in $\{v_1,v_2,w_1,w_2,w_3\}$. 
	We now arrive at one of $G_{11}, G_{12}, G_{14}, G_{15}, G_{16}, G_{17}, G_{19}, G_{20}$. 
	So $\CA_G$ is totally non-free, again by Proposition \ref{proposition: G3 to G19 totally non free}.
	
	3. Assume that $G$ has at least 2 cycles of length at least $3$ without any shared edge.
	In this case we contract edges until we end up with $G_3$ as subgraph and use Corollary \ref{corollary: G3 subgraph not free}. 
	
	4. Assume $G$ has exactly one cycle $C$ of length at least $4$. 
	Note that we cannot have two vertices $v_1,v_2\in C$ being connected by an edge $e$ not belonging to $C$, 
	since otherwise we could decompose $C$ into two cycles $C_1, C_2$ of length at least 3 with shared edge $e$,
		which was already excluded in case 2.
	If $G$ is not a  cycle graph, then we can choose a vertex $v$ such that it is the neighbor of a vertex in $C$. 
	After contracting edges of $C$ until we arrive at $C_4$ and taking the subgraph induced by the reduced cycle and $v$, 
	we get one of $G_{6}, G_{12}, G_{15}, G_{17}, G_{20}$. So $\CA_G$ it is totally non-free, 
	again thanks to Proposition \ref{proposition: G3 to G19 totally non free}. 
	
	5. Assume $G$ has exactly one cycle of length $3$.
	Let $v_1, v_2, v_3$ be the unique cycle in $G$. Assume that all vertices $v_1 , v_2 , v_3$ are of
	degree $3$. Taking the subgraph induced by $\{v_1 , v_2 , v_3\}$ together with their respective third
	neighbors yields the graph $G_7$, since all neighbors are distinct (($v_1 , v_2 , v_3)$ is the only cycle in $G$). Using Proposition \ref{proposition: G3 to G19 totally non free}, we see that $\CA_{G_7}$ is totally non-free, so we can assume that the vertex $v_3$ has degree $2$. 
	If $G$ had a vertex $v$ of degree $3$ different from $v_1$ and $v_2$, then contracting all edges between $v$ and $v_1$
	(or $v$ and $v_2$, we choose whatever vertex can be reached first by following the unique path) results in a graph with a vertex of degree at least $4$. This is a contradiction as seen in case 1. Therefore, all other vertices are of degree at most $2$ and as $v_1, v_2, v_3$ constitute its only cycle, $G$ must be a path-with-triangle-graph.
	
	6. Assume that $G$ is a tree.
	If there are two vertices of degree $3$, then we contract edges until it becomes a vertex of at least degree $4$. This connected subgraph arrangement is totally non-free, as seen in case 1. So there is at most one vertex of degree $3$. If $G$ it is not a path-graph or an almost-path-graph, then it has $G_8$ as an induced subgraph and the arrangement $\CA_G$ is totally non-free, due to Proposition \ref{proposition: G3 to G19 totally non free}.
\end{proof}

The bound in 
Theorem \ref{theorem: free connected subgrapharrangement rank bigger 4} on the number of vertices is necessary since there exist free multiplicities for $\CA_{G_1}$ and $\CA_{G_2}$. We present two of them in the following result. We emphasize that the presented multiplicity on $\CA_{G_2}$ is recursively free but fails to be additively free. The given multiplicity on $\CA_{G_1}$ in turn is inductively free. 

\begin{proposition}\label{proposition: Free G1 and G2 multiplicities}
	Let $\CA\in\{\CA_{G_1},\CA_{G_2}\}$. There exists a multiplicity $\mu \ne \one$ such that $(\CA,\mu)$ is free.
	In particular, there exists a multiplicity $\mu$ such that $(\CA_{G_2},\mu)$ is recursively free but not additively free.
\end{proposition}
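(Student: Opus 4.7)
The plan is to construct explicit multiplicities on $\CA_{G_1}$ and $\CA_{G_2}$ and verify their freeness either via Yoshinaga's extension theorem (Theorem \ref{theorem: Yoshinaga extendable theorem}) or directly via the Addition-Deletion Theorem (Theorem \ref{thm:add-del}). Since both $\CA_{G_1}$ and $\CA_{G_2}$ are locally $A_2$ with positive systems (Lemma \ref{lemma: connected subgraph arrangements are locallyA2}), any multiplicity $\mu$ satisfying the parity condition (\ref{eq:star}) can be tested for freeness by checking that the simple extension $\CE(\CA, \mu) \subset \BBQ^5$ is free, which reduces the problem to a combinatorial application of Terao's addition-deletion for simple arrangements. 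I would begin by exploiting the graph automorphisms to restrict attention to symmetric $\mu$: for $G_1$, the group $\BBZ/2 \times \BBZ/2$ swapping $1 \leftrightarrow 3$ and $2 \leftrightarrow 4$, and for $G_2 = K_4$ the full $S_4$.

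For $\CA_{G_1}$, I would guess a small symmetric $\mu \ne \one$ satisfying (\ref{eq:star}) and verify inductive freeness directly. Choose a hyperplane $H_0$ whose restriction $(\CA_{G_1}'', \mu^*)$ lands in a $3$-variable setting where recursion can be continued; the Euler multiplicities $\mu^*$ at each stage are computed by Proposition \ref{ATWEulerProp} (since rank-two localizations of a connected subgraph arrangement contain at most three hyperplanes in arithmetic position), and the rank-two exponents required at the base of the recursion are given by Wakamiko's Theorem \ref{WakaTheo}. If direct inductive freeness is awkward to organize, the alternative is to confirm freeness of the simple extension $\CE(\CA_{G_1}, \mu)$ via inductive freeness of that simple arrangement. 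The fact that $(\CA_{G_1}, \one)$ is not free (Theorem \ref{theorem: FreeConnectedSubgraphArrangements}) forces us to take $\mu \ne \one$, and a natural candidate is to assign larger multiplicity to the hyperplanes coming from singletons at the degree-three vertices or to $H_{[4]}$.

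For $\CA_{G_2}$, the same circle of ideas yields a recursively free multiplicity, but here one must explicitly use Definition \ref{def:recfree-mult}(iii): that is, exhibit $\mu$ together with a chain of recursive deletions and additions terminating at $\Phi_4$, at some point passing through a free multiarrangement on a hyperplane $H_0$ whose deleted $(\CA_{G_2}', \mu')$ is \emph{not} itself obtainable by additive moves. To prove $(\CA_{G_2}, \mu) \notin \CAFM$, I would enumerate all $\mu' \le \mu$ with $|\mu'| = |\mu|-1$ up to $S_4$-symmetry — a short list since $\mu$ is assumed symmetric — and for each orbit representative apply Lemma \ref{lemma:LMP_GMP_nonfree}, showing that $\LMP(2)$ computed from the rank-two localizations of $(\CA_{G_2}, \mu')$ strictly exceeds $\GMP(2)$ evaluated on the most balanced integer sequence of length $4$ summing to $|\mu'|$. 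By Theorem \ref{MixedEquality} this rules out freeness of each predecessor, breaking every potential filtration $\mu_0 < \cdots < \mu$.

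The main obstacle is locating a multiplicity $\mu$ on $\CA_{G_2}$ that sits exactly on the boundary of freeness — recursively free itself, yet with every immediate predecessor non-free — which is not guaranteed by any general mechanism and essentially demands either a targeted symmetry-guided search or a short computer-assisted exploration. A secondary difficulty is bookkeeping: $\CA_{G_2}$ has $15$ hyperplanes, so even under the $S_4$-action the number of $L(\CA)_2$-orbits whose $\LMP(2)$-contributions must be tallied, and the number of predecessor orbits to rule out, is nontrivial; this is where restricting to a highly symmetric $\mu$ pays off, as it makes the $\LMP$/$\GMP$ comparison a short finite calculation rather than an unmanageable enumeration.
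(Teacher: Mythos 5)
Your proposal has one branch that provably fails and, more fundamentally, never produces the object whose existence is the entire content of the proposition. The verification route you lean on first --- restrict to $\mu$ satisfying the parity condition \eqref{eq:star} and test freeness of the simple extension $\CE(\CA_G,\mu)\subset\BBQ^5$ --- cannot yield any example: since $\mu\ge\one$, every band $-\frac{\mu(H)-1}{2}\le k\le\frac{\mu(H)}{2}$ contains $k=0$, so the localization of $\CE(\CA_G,\mu)$ at the line $x_1=\cdots=x_4=0$ is the simple arrangement $\CA_G$ itself, which is non-free for $G\in\{G_1,G_2\}$ (Proposition \ref{proposition: SimpleG1andG2notFree}). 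Hence $\CE(\CA_G,\mu)$ is never free by Theorem \ref{theorem: Multilocalizations are free}, and Theorem \ref{theorem: Yoshinaga extendable theorem} then shows that \emph{every} $\mu$ satisfying \eqref{eq:star} is non-free on $\CA_{G_1}$ and $\CA_{G_2}$ --- this is exactly the mechanism the paper uses for $\mathscr{D}$ in Corollary \ref{corollary: DnotFreeWithStar}. Consistently, the paper's witnesses violate \eqref{eq:star}: e.g.\ $\mu_2(H_{12})=1$ is odd while $\mu_2(H_1)=\mu_2(H_2)=2$ are even. Your symmetry heuristic is also contradicted by the actual witness on $\CA_{G_1}$: the paper's $\mu_1$ assigns $2$ to $H_1,H_2,H_3,H_{1234},H_{14}$ and $1$ elsewhere, which is invariant under neither $1\leftrightarrow 3$ nor $2\leftrightarrow 4$, so a search confined to multiplicities invariant under the automorphism group of $G_1$ may simply find nothing. (For $G_2$ the witness $\mu_2$ --- multiplicity $2$ on $H_1,\dots,H_4,H_{1234}$ and $1$ on the remaining ten hyperplanes --- does happen to be $S_4$-symmetric.)

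The only viable branch of your plan is the direct Addition--Deletion recursion, and that is precisely what the paper does: it exhibits $\mu_1$ and $\mu_2$ explicitly and certifies them by the induction and recursion tables (Tables \ref{G1IndFree} and \ref{tableRecFree}), with $(\CA_{G_1},\mu_1)$ inductively free and $(\CA_{G_2},\mu_2)$ recursively free with exponents $(5,5,5,5)$, reached by overshooting past $|\mu_2|=20$ and then deleting --- in line with your reading of Definition \ref{def:recfree-mult}(iii). But since you stop at ``a targeted symmetry-guided search or a short computer-assisted exploration,'' the existence claim is not established; this is a genuine gap, not a matter of presentation. For non-additive freeness, your plan to rule out each order-$19$ predecessor via Lemma \ref{lemma:LMP_GMP_nonfree} is plausible but unverified, and that criterion is only sufficient for non-freeness, so it could in principle be inconclusive on some orbit. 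The paper's argument is cleaner and uniform: for every $H\in\CA_{G_2}$ the Euler restriction satisfies $|(\mu_2)^*|=14$, whereas freeness of any predecessor would, by Remark \ref{rem:AddDelThm_multi_simple} together with $\exp(\CA_{G_2},\mu_2)=(5,5,5,5)$, force a free restriction whose exponents sum to $15$ --- a contradiction that kills every potential filtration in one stroke.
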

\begin{proof}
	We fix the following order on the hyperplanes of $\CA_{G_2}$: 
	\[H_1,H_2,H_3,H_4,H_{1234},H_{34},H_{23},H_{234},H_{14},H_{13},H_{134},H_{12},H_{124},H_{123},H_{24},\]
	and define the multiplicity $\mu_2=(2,2,2,2,2,1,1,1,1,1,1,1,1,1,1)$. Then the multiarrangement $(\CA_{G_2},\mu_2)$ is recursively free but not additively free. It follows from Remark \ref{rem:AddDelThm_multi_simple} that $(\CA_{G_2},\mu_2)$ fails to be additively free since we have $\vert((\CA_{G_2})^H,(\mu_2)^*)\vert=14$ for an arbitrary $H\in \CA_{G_2}$.
	
	For $\CA_{G_1}$ we drop the last hyperplane $H_{24}$ from the list above, but fix the same order on the rest of the hyperplanes  
	and define $\mu_1=(2, 2, 2, 1, 2, 1, 1, 1, 2, 1, 1, 1, 1, 1)$. We claim that $(\CA_{G_1},\mu_1)$  is inductively free. 
	
	The facts that $(\CA_{G_1},\mu_1)$ is inductively free and $(\CA_{G_2},\mu_2)$ is recursively free follow from the corresponding induction tables, Table \ref{G1IndFree} and Table \ref{tableRecFree}, respectively.
\end{proof}

\begin{remark}
    We note that $\CA_{G_2}$ appears in the famous counterexample by Edelmann and Reiner  \cite{edelmannreiner: counterexample orlik} to Orlik's conjecture as a non-free restriction of a free arrangement $\CB$. As a multiarrangement $(\CA_{G_2},\mu_2) = (\CB'', \kappa)$ is the Ziegler restriction of $\CB$ (recall Theorem \ref{theorem: ziegler restriction}). This gives an alternative proof of the freeness of $(\CA_{G_2},\mu_2)$. 
    Note that  $\CA_{G_2} = \CA_{G_1} \cup \{H_{24}\}$. It turns out that starting with the inductively free multiarrangement $(\CA_{G_1},\mu_1)$ and adding $H_{24}$ with multiplicity $1$ gives an inductively free multiplicity on $\CA_{G_2}$.
\end{remark}

\noindent Since every connected graph on four vertices is either $G_1, G_2, P_4, C_4, A_{3,2}$ or $ \Delta_{3,1}$ we can combine Theorem \ref{theorem: free connected subgrapharrangement rank bigger 4} and Proposition \ref{proposition: Free G1 and G2 multiplicities} to derive the following result.

\begin{corollary}
	\label{coro:GraphsWithFreeMultiplicities}
	Let $G$ be a connected graph. There exists a multiplicity $\mu$ such that the connected subgraph multiarrangement $(\CA_G,\mu)$ is free if and only if $G$ is $G_1$, $G_2$, a path-graph, a cycle-graph, an almost-path-graph, or a path-with-triangle-graph.
\end{corollary}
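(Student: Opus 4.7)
My plan is to obtain this corollary by straightforwardly combining the two preceding results, Theorem \ref{theorem: free connected subgrapharrangement rank bigger 4} and Proposition \ref{proposition: Free G1 and G2 multiplicities}, after splitting on the number of vertices of $G$. The only real content to add on top of these two results is a finite check that every connected graph on at most four vertices already lies in the stated list, so the main obstacle is essentially bookkeeping rather than any new combinatorial or algebraic argument.

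For the ``if'' direction I would argue as follows. If $G$ is a path-graph, a cycle-graph, an almost-path-graph, or a path-with-triangle-graph, then by Theorem \ref{theorem: FreeConnectedSubgraphArrangements} the simple arrangement $\CA_G$ is free, so the constant multiplicity $\mu = \one$ witnesses the existence claim. If $G = G_1$ or $G = G_2$, then Proposition \ref{proposition: Free G1 and G2 multiplicities} supplies an explicit free multiplicity $\mu_1$ or $\mu_2$ on $\CA_G$.

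For the ``only if'' direction, assume $(\CA_G, \mu)$ is free for some multiplicity $\mu$. If $|V(G)| \geq 5$, Theorem \ref{theorem: free connected subgrapharrangement rank bigger 4} applies directly and forces $G$ to be a path-, cycle-, almost-path-, or path-with-triangle-graph. If $|V(G)| \leq 4$, I would enumerate: for one, two, or three vertices every connected graph is either some $P_n$ or $C_3$, all of which appear in the list. On four vertices one checks by edge count that the only connected graphs are $P_4$, the star $K_{1,3} = A_{3,2}$, $C_4$, the paw $\Delta_{3,1}$, the diamond $G_1$, and $K_4 = G_2$, and all of these are listed in the statement.

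The main (modest) obstacle is simply ensuring that this enumeration on four vertices is exhaustive and correctly matches the naming conventions of Definition \ref{definition: cuntz kühne graph families}; once that is verified, the two directions combine to give the claim. No additional freeness arguments are needed, since the heavy lifting has been done in the proofs of Theorem \ref{theorem: free connected subgrapharrangement rank bigger 4} (which in turn leans on the non-freeness list from Proposition \ref{proposition: G3 to G19 totally non free}) and Proposition \ref{proposition: Free G1 and G2 multiplicities}.
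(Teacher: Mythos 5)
Your proposal is correct and takes essentially the same route as the paper, which derives the corollary by combining Theorem \ref{theorem: free connected subgrapharrangement rank bigger 4} with Proposition \ref{proposition: Free G1 and G2 multiplicities} after observing that every connected graph on four vertices is one of $G_1$, $G_2$, $P_4$, $C_4$, $A_{3,2}$, or $\Delta_{3,1}$. Your explicit handling of graphs on at most three vertices and of the \emph{if} direction via Theorem \ref{theorem: FreeConnectedSubgraphArrangements} merely spells out what the paper leaves implicit.
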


\section{Constant multiplicities on connected subgraph arrangements}
\label{sect:FreeConstantMultiplicities}

\subsection{Investigating $G_1$ and $G_2$}
\label{subsect:nonfreeag}

The proof in \cite[\S 6.2]{cuntzkuehne:subgrapharrangements} of the non-freeness of the connected subgraph arrangements stemming from $G_1$ up to $G_8$ depends on tedious computer calculations.
In this section, we provide a non-computational, conceptional proof giving a complete list of free constant multiplicities on the family of graphs from Theorem $\ref{theorem: FreeConnectedSubgraphArrangements}$, as well as giving a non-computational proof for the non-freeness of $\CA_{G_1}$ and $\CA_{G_2}$ in the process. Combined with Proposition \ref{proposition: G3 to G19 totally non free} we obtain a proof for the non-freeness of the simple connected subgraph arrangements stemming from $G_1$ up to $G_{20}$ free of any machine calculations. We use the following method for calculating $\LMP (2)$ of the connected subgraph arrangements. Then we utilize Lemma \ref{lemma:LMP_GMP_nonfree}.

\begin{remark}
\label{rem:alg1}
	\textbf{Algorithm to calculate $\LMP (2)$ for a simple $\CA_G$:}
	\begin{enumerate}
		\item  Determine the number of all types (i.e.~path-graph, almost-path-graph, etc.) of connected
		induced subgraphs $G[S]$ of $G = (N, E)$, where $S \subseteq N$ and $\vert S\vert \geq 2$. 
		
		\item For every subgraph $G[S]$ from step $(1)$ we determine the number of triples of the form  $(G[S],G[I],G[J])$,
		where $G[I],G[J]$ are connected induced subgraphs of $G[S]$ with $I \, \dot{\cup} \, J = S$. Adding up the number of all such triples for all connected induced subgraphs $G'$ from (1) provides  an invariant $\nu_1$. Using Lemma \ref{lemma: connected subgraph arrangements are locallyA2} we see that for a fixed graph $G'$ this is just the number of rank $2$ localizations with $N(G')=A_{i_3}$, with the notation as in the lemma. 
		
		\item Calculate $\nu_2:=\binom{\vert\CA_G\vert}{2}$. If all rank $2$ localizations were to consist of exactly two elements, then $\nu_2$ would be the number of all rank $2$ localizations.
		
		\item We have $\LMP (2)= \nu_2-\nu_1$. To see this, a rank $2$ localization with two elements is free with exponents $(1,1)$, while a rank $2$ localization with three elements is free with exponents $(1,2)$. So if instead of having three rank $2$ localizations with two elements we have one rank $2$ localization with three elements, then $\LMP (2)$ gets decreased by $1$ (since we assume for $\nu_2$ that all localizations contain exactly two elements). Since $\nu_1$ is the number of rank $2$ localizations with three elements, $\nu_2-\nu_1$ coincides with $\LMP (2)$.
	\end{enumerate}
\end{remark}

We demonstrate how this algorithm works in the cases where $G\in\{G_1, G_2\}$.

\begin{example}
	We begin with $G=G_1$.
	\begin{itemize}
		\item $5\cdot P_2, 2\cdot P_3, 2\cdot C_3, 1\cdot G_1$
		\item $G'=P_2: 1\cdot (P_2,P_1,P_1)$ \\
		$G'=P_3: 2\cdot (P_3,P_2,P_1)$\\
		$G'=C_3: 3\cdot (C_3,P_2,P_1)$\\
		$G'=G_1: 2\cdot (G_1,C_3,P_1), 2\cdot (G_1,P_3,P_1), 2\cdot (G_1,P_2,P_2)$\\
		$\nu_1=5\cdot 1+2\cdot 2+2\cdot 3+1\cdot 6=21$
		\item $\nu_2=\binom{14}{2}=91$
		\item $\LMP (2)=91-21=70$
	\end{itemize}
	For $G=G_2$ we get the following result.
	\begin{itemize}
		\item $6\cdot P_2, 4\cdot C_3, 1\cdot G_1$
		\item $G'=P_2: 1\cdot (P_2,P_1,P_1)$ \\
		$G'=C_3: 3\cdot (C_3,P_2,P_1)$\\
		$G'=G_2: 4\cdot (G_1,C_3,P_1), 3\cdot (G_1,P_2,P_2)$\\
		$\nu_1=6\cdot 1+4\cdot 3+1\cdot 7=25$
		\item $\nu_2=\binom{15}{2}=105$
		\item $\LMP (2)=105-25=80$
	\end{itemize}
\end{example}

In view of Lemma \ref{lemma:LMP_GMP_nonfree}, 
we derive the non-freeness of $\CA_{G_1}$ and $\CA_{G_2}$ without any machine 
calculations as follows.

\begin{proposition}\label{proposition: SimpleG1andG2notFree}
	Let $G\in\{G_1,G_2\}$, then $\CA_{G}$ is not free.
\end{proposition}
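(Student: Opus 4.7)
The plan is to apply Lemma \ref{lemma:LMP_GMP_nonfree}(ii) directly to each of the two arrangements, using the values of $\LMP(2)$ just computed in the preceding example via the counting algorithm from Remark \ref{rem:alg1}. Both $\CA_{G_1}$ and $\CA_{G_2}$ live in $V=\BBQ^4$, so if either were free, then by Terao's theorem its exponents would be of the form $(1,e_2,e_3,e_4)_{\le}$ with sum equal to $|\CA_G|$. In view of Remark \ref{rem:GMP_LMP_balanced}, the supremum of $\GMP(2)$ over all such candidate exponent tuples is attained at the unique most balanced one, so it is enough to compare $\LMP(2)$ to $\GMP(2)$ for that single tuple.

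For $G_1$ we have $|\CA_{G_1}|=14$ and $\LMP(2)=70$ from the example. The most balanced sequence $(1,e_2,e_3,e_4)_{\le}$ with $e_2+e_3+e_4=13$ is $(1,4,4,5)$, and a direct calculation gives
\[
\GMP(2)=1\cdot 4+1\cdot 4+1\cdot 5+4\cdot 4+4\cdot 5+4\cdot 5=69<70=\LMP(2).
\]
For $G_2$ we have $|\CA_{G_2}|=15$ and $\LMP(2)=80$; the most balanced sequence with $e_2+e_3+e_4=14$ is $(1,4,5,5)$, and
\[
\GMP(2)=1\cdot 4+1\cdot 5+1\cdot 5+4\cdot 5+4\cdot 5+5\cdot 5=79<80=\LMP(2).
\]
In each case Lemma \ref{lemma:LMP_GMP_nonfree}(ii) yields the desired non-freeness.

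The substantive work has already been done in the combinatorial computation of $\LMP(2)$ via the counting procedure of Remark \ref{rem:alg1}; once those two numbers are in hand the argument reduces to a short arithmetic comparison. The main obstacle to overcome in this style of proof is precisely the bookkeeping of rank-two localizations hidden inside the algorithm, but this is handled once and for all by the classification of connected induced subgraphs of $G_1$ and $G_2$ together with Lemma \ref{lemma: connected subgraph arrangements are locallyA2}. No machine calculation or coordinate-level manipulation enters, which is exactly the conceptual improvement promised at the start of \S\ref{subsect:nonfreeag}.
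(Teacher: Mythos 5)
Your proposal is correct and takes essentially the same route as the paper's own proof: both compute $\LMP(2)=70$ and $\LMP(2)=80$ via the counting algorithm of Remark \ref{rem:alg1}, compare against $\GMP(2)=69$ and $\GMP(2)=79$ for the most balanced candidate exponents $(1,4,4,5)_\le$ and $(1,4,5,5)_\le$ (the $1$ forced by freeness of a simple arrangement), and conclude non-freeness via Lemma \ref{lemma:LMP_GMP_nonfree}(ii), i.e.\ Theorem \ref{MixedEquality}. Your arithmetic checks out, and there are no gaps.
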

\begin{proof}
	Since $|\CA_{G_1}| = 14$, a quadruple of potential exponents for $\CA_{G_1}$ has to be $(1,4,4,5)_\leq$, or less balanced. Calculating $\GMP (2)$ for this tuple gives $\GMP(2)=69$. Since exponents that are less balanced afford a smaller $\GMP (2)$ and since $\LMP (2)=70$, 
	we have $\LMP (2)\ne\GMP (2)$ and so $\CA_{G_1}$ is not free, by Lemma \ref{lemma:LMP_GMP_nonfree}.
	
	Since $|\CA_{G_1}| = 15$, choose potential balanced  exponents $(1,4,5,5)_\leq$ for $\CA_{G_2}$ to maximize $\GMP (2)$. Using this quadruple, we get $\GMP (2)=79$. Since exponents that are less balanced yield a smaller value for $\GMP (2)$ and since $\LMP (2)=80$, we have $\LMP (2)\ne\GMP (2)$ and so $\CA_{G_2}$ can't be free, thanks to Lemma \ref{lemma:LMP_GMP_nonfree}.
\end{proof}

\begin{remark}
	Propositions \ref{proposition: G3 to G19 totally non free} and 
	\ref{proposition: SimpleG1andG2notFree} combined give a non-computational, conceptional  proof for the non-freeness of the simple arrangements $\CA_{G_i}$, for $i\in\{1,2,\dots, 20\}$. 
\end{remark}
			
\noindent Next we modify the algorithm from Remark \ref{rem:alg1} to calculate $\LMP (2)$ when $\CA_G$ gets equipped with a constant multiplicity $\mu = c \one$ for $c \ne 1$.

\begin{remark}
\label{rem:alg2}
	\textbf{Algorithm to calculate $\LMP (2)$ for a constant multiplicity $\mu\equiv c$:}
	\begin{enumerate}
		\item Determine the number of all types of connected induced subgraphs of $G$ with at least $2$ vertices (i.e.~path-graph, almost-path-graph, etc.). 
		\item For every subgraph $G'$ from step (1) we determine the number of triples $(G',G[I],G[J])$, where $G[I],G[J]$ are connected induced subgraphs of $G'$ with $N(G[I])\cup N(G[J])=N(G')$. Let $\nu_1$ be the sum of the number of all such triples for all connected induced subgraphs $G'$ from part (1).
		\item Calculate $\nu_2:=\binom{\vert\CA_G\vert}{2}$. 
		\item Since $\nu_1$ is the number of rank $2$ localizations with three elements, the number of rank $2$ localizations with two elements is equal to $\nu_3:=\nu_2-3 \nu_1$, because if there is one rank $2$ localization with three elements, then the hyperplanes of this localization could have potentially formed three rank $2$ localizations with two elements. 
		\item Using Theorem \ref{WakaTheo} we derive the exponents of all rank $2$ localizations with three elements (which depends on the parity of $c$), while localizations of rank $2$ with two elements are free with exponents $(c,c)$. We get $$
		\LMP (2)=\begin{cases}
			\nu_1\cdot\left(\frac{3c}{2}\right)^2+\nu_3  c^2, & \text{ (if }c\text{ is even),}\\
			\nu_1\cdot\left(\frac{3c-1}{2}\cdot \frac{3c+1}{2}\right)+\nu_3  c^2, & \text{ (if }c\text{ is odd).}
		\end{cases}
		$$
	\end{enumerate}
\end{remark}

We now use Lemma \ref{lemma:LMP_GMP_nonfree} to extend Proposition \ref{proposition: SimpleG1andG2notFree} to arbitrary constant multiplicities.

\begin{proposition}
	\label{prop:G_1_2ConstNotFree}
	For $G\in\{G_1,G_2\}$ and $\mu\equiv c \geq 1$ a constant multiplicity on $\CA_G$,  $(\CA_G,\mu)$ is not free.
\end{proposition}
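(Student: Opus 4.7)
The plan is to apply Lemma~\ref{lemma:LMP_GMP_nonfree}(i): if the second local mixed product of $(\CA_G, k\one)$ strictly exceeds the maximum of $\GMP(2)$ over nonnegative integer $4$-tuples summing to $k|\CA_G|$, then $(\CA_G, k\one)$ cannot be free. The case $k = 1$ is already Proposition~\ref{proposition: SimpleG1andG2notFree}, so I would assume $k \geq 2$ from the outset.

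First, I would compute $\LMP(2)$ via Remark~\ref{rem:alg2}. The combinatorial counts $N_1$ and $N_3 = \binom{|\CA_G|}{2} - 3N_1$ were already tabulated in the paragraph preceding Proposition~\ref{proposition: SimpleG1andG2notFree} as $(N_1, N_3) = (21, 28)$ for $G_1$ and $(25, 30)$ for $G_2$. Substituting into the parity-sensitive formula yields $\LMP(2) = 301k^2/4$ (resp.\ $(301k^2 - 21)/4$) for $G_1$, and $\LMP(2) = 345k^2/4$ (resp.\ $(345k^2 - 25)/4$) for $G_2$, according as $k$ is even or odd.

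For an upper bound on $\GMP(2)$, I would write $n := k|\CA_G|$ as $n = 4q + r$ with $r \in \{0,1,2,3\}$, and evaluate $\GMP(2)$ on the unique most balanced $4$-tuple summing to $n$; by Remark~\ref{rem:GMP_LMP_balanced} this is the maximum. A short direct calculation gives $\GMP(2)_{\max} = (3n^2 - c(r))/8$ with $c(r) \in \{0, 3, 4, 3\}$ for $r = 0, 1, 2, 3$ respectively.

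The conclusion would then be a case check on $k \pmod 4$: in each residue class the difference $\LMP(2) - \GMP(2)_{\max}$ simplifies to an expression of shape $(Ak^2 - B)/8$ with $A > 0$, which is positive for all $k \geq 2$. The only mildly delicate case is $G_2$ with $k \equiv 1 \pmod 4$, where the difference is $(15k^2 - 47)/8$ and thus fails to be positive precisely at $k = 1$; but that is exactly the simple arrangement, disposed of separately by Proposition~\ref{proposition: SimpleG1andG2notFree} via the sharper Lemma~\ref{lemma:LMP_GMP_nonfree}(ii) exploiting the constraint $1 \in \exp(\CA_G)$. With that carve-out, Lemma~\ref{lemma:LMP_GMP_nonfree}(i) finishes the argument.
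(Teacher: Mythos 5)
Your proposal is correct and takes essentially the same approach as the paper: both compute $\LMP(2)$ via Remark~\ref{rem:alg2} with the counts $(N_1,N_3)=(21,28)$ for $G_1$ and $(25,30)$ for $G_2$, invoke the balancedness comparison of Lemma~\ref{lemma:LMP_GMP_nonfree} (via Theorem~\ref{MixedEquality}), and dispose of $k=1$ separately through Proposition~\ref{proposition: SimpleG1andG2notFree}. The only deviation is cosmetic: the paper bounds $\GMP(2)$ above by the fractional perfectly balanced value $3(k|\CA_G|)^2/8$ (i.e.\ $73.5k^2$ resp.\ $84.375k^2$), whereas you compute the exact integer maximum $(3n^2-c(r))/8$ with a residue analysis modulo $4$ --- a slightly sharper but equivalent estimate (and your side remark singling out $G_2$ with $k\equiv 1 \bmod 4$ as the lone delicate case overlooks that $G_1$ with odd $k$ likewise fails only at $k=1$, which is harmless since $k=1$ is carved out from the start).
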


\begin{proof}
	The case $c = 1$ is Proposition \ref{proposition: SimpleG1andG2notFree}. So let $c\geq 2$. 
	From the calculation in the example we know that:
	$G_1:$ $\nu_1=21,\nu_2=91$ and
	$G_2:$ $\nu_1=25,\nu_2=105$, which gives $\nu_3=28$ for $G_1$ and $\nu_3=30$ for $G_2$. 
	So for $G_1$ we get 
	$$\LMP(2)=\begin{cases}
		21\cdot\left(\frac{3c}{2}\right)^2+28  c^2=75.25c^2, & \text{if $c$ is  even.}\\
		21\cdot\left(\frac{3c-1}{2}\cdot \frac{3c+1}{2}\right)+28  c^2=75.25c^2-5.25, & \text{if $c$ is odd,}
	\end{cases}$$
	and for $G_2$ we get
	$$\LMP(2)=\begin{cases}
		25\cdot\left(\frac{3c}{2}\right)^2+30  c^2=86.25c^2, & \text{if $c$ is even.}\\
		25\cdot\left(\frac{3c-1}{2}\cdot \frac{3c+1}{2}\right)+30  c^2)=86.25c^2-6.25, & \text{if $c$ is odd.}
	\end{cases}$$
	If we assume perfectly balanced exponents for $\CA_{G_1}$ (assuming $14c\equiv 0\mod 4$) we get the set of exponents $(\frac{14c}{4},\frac{14c}{4},\frac{14c}{4},\frac{14c}{4})_\leq$ which results in $\GMP (2)=73.5c^2$.\\
	If we assume perfectly balanced exponents for $\CA_{G_2}$ (assuming $15c\equiv 0\mod 4$) we get the set of exponents $(\frac{15c}{4},\frac{15c}{4},\frac{15c}{4},\frac{15c}{4})_\leq$ which results in $\GMP (2)=84.375c^2$.\\
	Since $c\geq 2$, in any case,  we have $\LMP (2)>\GMP (2)$ and thus $(\CA_{G_2},\mu)$ can't be free, by Lemma \ref{lemma:LMP_GMP_nonfree}.
\end{proof}

\subsection{Constant multiplicities on a free $\CA_{G}$} In this subsection, we inspect free constant multiplicities on a free $\CA_G$. We give a complete list  when  $(\CA_G,\mu)$ is free for $G\in\{P_n,A_{n,k},\Delta_{n,k}, C_n\}$ and 
$\mu\equiv c \geq 1$.

First we consider the case $G=P_n$.
Terao showed in \cite{terao:free coxeter multiarrangements} that every constant multiplicity on a Coxeter arrangement is free. Since $\CA_{P_n}$ is the Coxeter arrangement of type $A_n$, we derive the freeness of $(\CA_{P_n},\mu)$ 
for $\mu\equiv c \geq 1$.

Now suppose that $G\neq P_n$. We start with $G=C_3$ and follow the idea described in Lemma \ref{lemma:LMP_GMP_nonfree} during every proof.

\begin{proposition}\label{proposition: C3FreeConstantMultiplicities}
	Let $G=C_3$ and $\mu\equiv c\geq 1$ is  a constant multiplicity on $\CA_G$. Then $(\CA_G,\mu)$ is free if and only if $c\in\{1,3\}$.
\end{proposition}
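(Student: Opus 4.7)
The plan is to use the $\LMP$/$\GMP$ framework of Lemma~\ref{lemma:LMP_GMP_nonfree} to exclude freeness for $k \notin \{1,3\}$, and then to verify freeness at $k=1$ and $k=3$ separately. First I would run the algorithm of Remark~\ref{rem:alg2}: the connected induced subgraphs of $C_3$ on at least two vertices consist of three copies of $P_2$ and one copy of $C_3$, so $N_1 = 3 \cdot 1 + 1 \cdot 3 = 6$, $N_2 = \binom{7}{2} = 21$, and $N_3 = N_2 - 3 N_1 = 3$. Substituting into the formula of Remark~\ref{rem:alg2} yields
\[
\LMP(2)(\CA_{C_3}, k\one) \;=\; \begin{cases} 33k^2/2, & k \text{ even},\\ (33k^2-3)/2, & k \text{ odd}. \end{cases}
\]

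Next, since $|k\one| = 7k$, writing $7k = 3q+r$ with $0 \le r \le 2$ and using the identity $\GMP(2) = \bigl((7k)^2 - \sum e_i^2\bigr)/2$, the most balanced tuple $(e_1,e_2,e_3)_\le$ with $\sum e_i = 7k$ attains
\[
\GMP(2)_{\max} \;=\; \begin{cases} 49k^2/3, & 3 \mid k,\\ (49k^2-1)/3, & 3 \nmid k. \end{cases}
\]
A case split on the parity of $k$ and whether $3 \mid k$ shows that $\LMP(2)-\GMP(2)_{\max}$ equals one of $k^2/6$, $(k^2+2)/6$, $(k^2-9)/6$, or $(k^2-7)/6$, and this quantity is strictly positive exactly when $k \notin \{1,3\}$. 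By Lemma~\ref{lemma:LMP_GMP_nonfree} this rules out freeness for all such $k$.

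For the freeness direction, $k=1$ is Theorem~\ref{theorem: FreeConnectedSubgraphArrangements} with $\exp(\CA_{C_3}) = (1,3,3)$. For $k=3$ one has $\LMP(2)=\GMP(2)=147$ with candidate exponents $(7,7,7)$, so the numerical obstruction vanishes but a positive construction is still required. My plan here is to invoke Yoshinaga's extendability theorem (Theorem~\ref{theorem: Yoshinaga extendable theorem}): Lemma~\ref{lemma: connected subgraph arrangements are locallyA2} shows that $\CA_{C_3}$ is locally $A_2$, and the $0/1$-defining forms form a positive system since each relation $A_{i_3} = A_{i_1} \dot\cup A_{i_2}$ translates to $\alpha_{i_3} = \alpha_{i_1} + \alpha_{i_2}$. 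As $3\one$ is identically odd, the parity hypothesis~(\ref{eq:star}) holds vacuously, so $(\CA_{C_3},3\one)$ is free if and only if the extension $\CE(\CA_{C_3},3\one) \subset \BBQ^4$—a simple arrangement of $1 + 7 \cdot 3 = 22$ hyperplanes—is free. The hard part will be checking freeness of this $22$-hyperplane extension, since the $\LMP$/$\GMP$ apparatus is purely obstructive; I would attack it either by building an inductive-freeness filtration via the Addition-Deletion theorem (Theorem~\ref{thm:add-del}) or by exhibiting an $S_3$-equivariant triple of degree-$7$ generators of $D(\CA_{C_3},3\one)$ and verifying them against Saito's criterion (Theorem~\ref{theorem: saitos criterion}) directly.
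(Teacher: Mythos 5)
Your non-freeness argument for $k \notin \{1,3\}$ is correct and is essentially the paper's own: both run the algorithm of Remark \ref{rem:alg2} to get $N_1 = 6$, $N_2 = 21$, $N_3 = 3$, hence $\LMP(2) = 33k^2/2$ for $k$ even and $(33k^2-3)/2$ for $k$ odd, and then compare with the most balanced candidate exponents via Lemma \ref{lemma:LMP_GMP_nonfree} and Theorem \ref{MixedEquality}. Your closed-form case analysis (the differences $k^2/6$, $(k^2+2)/6$, $(k^2-9)/6$, $(k^2-7)/6$) is in fact tidier than the paper's, which treats $k\geq 4$ and $k=2$ separately; the numbers agree in all cases. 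The case $k=1$ is handled identically (Theorem \ref{theorem: FreeConnectedSubgraphArrangements}).

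However, there is a genuine gap at $k=3$: you never prove freeness, you only outline how one might. Your reduction via Theorem \ref{theorem: Yoshinaga extendable theorem} is legitimate --- $\CA_{C_3}$ is locally $A_2$ with a positive system, and condition \eqref{eq:star} holds vacuously since $3\one$ is odd on every hyperplane --- but that theorem is an equivalence, not a freeness certificate: it converts freeness of the rank-$3$ multiarrangement $(\CA_{C_3},3\one)$ into freeness of the rank-$4$ simple arrangement $\CE(\CA_{C_3},3\one)$ with $22$ hyperplanes, a verification of comparable difficulty which you explicitly defer (``I would attack it either by \ldots''). As written, the ``if'' direction for $k=3$ is a plan, not a proof, and the $\LMP$/$\GMP$ equality $147 = 147$ you note is, as you say yourself, no evidence of freeness. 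The paper closes this step concretely: it exhibits an explicit addition--deletion chain (Table \ref{C3ThreeIndFree}) showing, via Theorem \ref{thm:add-del}, that $(\CA_{C_3},3\one)$ is inductively free with exponents $(7,7,7)$. To complete your proposal you must carry out one of your two suggested attacks in full, e.g.\ produce such an induction table either for the multiarrangement directly or for the $22$-hyperplane extension.
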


\begin{proof}
	For the forward implication, using the algorithm above, we get $\nu_1=1\cdot 3+3\cdot 1=6,\nu_2=\binom{7}{2}=21,\nu_3=21-3\cdot 6=3$. This gives 
	$$\LMP (2)=\begin{cases}
		6\cdot\left(\frac{3c}{2}\right)^2+3  c^2=16.5c^2 & \text{ (if }c\text{ is even), or }\\
		6\cdot\left(\frac{3c-1}{2}\cdot \frac{3c+1}{2}\right)+3  c^2=16.5c^2-1.5 & \text{ (if }c\text{ is odd).}
	\end{cases}$$    
	Assuming balanced exponents we get 
	$$\GMP (2)=\begin{cases}
		16\frac{1}{3}c^2, & \text{if }7c\equiv 0\mod 3\text{ or}\\
		16\frac{1}{3}c^2-\frac{1}{3}, & \text{else.}
	\end{cases}$$
	For $c\geq 4$ we get that $\LMP (2)>\GMP (2)$. For $c=3$ we get equality and for $c=2$ we get $\LMP (2)=66$ and depending on whether the assumed exponents are $(3,5,6)_\leq,(4,4,6)_\leq$, or $(4,5,5)_\leq$, a value of $63, 64$, or $65$, respectively for $\GMP (2)$.
	So if $c\not\in\{1,3\}$ we have $\LMP (2)>\GMP (2)$ and thanks to Lemma \ref{lemma:LMP_GMP_nonfree}, we deduce non-freeness.
	
	Next we address the reverse implication. 
 The freeness of $(\CA_G,\one)$ follows from Theorem \ref{theorem: FreeConnectedSubgraphArrangements}.  
In Table \ref{C3ThreeIndFree}, 
we present an induction table for $(\CA_G,3 \one)$ showing that the latter is actually inductively free.
\end{proof}

\begin{proposition}
	Let $G\in\{C_n,\Delta_{n,k},A_{n,k}\}$. Suppose $\rank\CA_G\geq 4$ and $\mu\equiv c\geq 1$ a constant multiplicity on $\CA_G$. Then $(\CA_G,\mu)$ is free if and only if $c=1$.
\end{proposition}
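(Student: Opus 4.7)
The backward direction ($m=1$) is immediate from Theorem~\ref{theorem: FreeConnectedSubgraphArrangements}.

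For the forward direction I would argue by contradiction: suppose $(\CA_G, m\one)$ is free with $m \geq 2$. The plan is a two-step reduction. First, I reduce to a minimal instance in each of the three families via localization. Second, for each minimal graph I rule out freeness of a constant multiplicity $m \geq 2$ by a local--global mixed product inequality, as in Lemma~\ref{lemma:LMP_GMP_nonfree}.

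\emph{Reduction to base cases.} By Lemma~\ref{lemma:ind_subgraph contract localization} and Theorem~\ref{theorem: Multilocalizations are free}, both induced subgraphs and edge contractions yield localizations of $\CA_G$, and the restriction of a constant multiplicity to a localization is again a constant multiplicity of the same value. Concretely:
\begin{itemize}
\item for $G = C_n$ with $n \geq 4$, I contract $n-4$ edges to obtain $C_4$ as a localization;
\item for $G = \Delta_{n,k}$ with $n \geq 3$, I take the induced subgraph on $\{k-1, k, k+1, n+1\}$ (if $k \geq 2$) or $\{k, k+1, k+2, n+1\}$ (if $k \leq n-1$) to obtain $\Delta_{3,1}$ up to isomorphism;
\item for $G = A_{n,k}$ with $2 \leq k \leq n-1$, I take the induced subgraph on $\{k-1, k, k+1, n+1\}$ to obtain $A_{3,2}$.
\end{itemize}
It therefore suffices to prove the non-freeness of $(\CA_\CB, m\one)$ for $m \geq 2$ and each base graph $\CB \in \{C_4, \Delta_{3,1}, A_{3,2}\}$.

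\emph{Base non-freeness via $\LMP$--$\GMP$.} For each base graph $\CB$, I would use the algorithm of Remark~\ref{rem:alg2} to compute $\LMP(2)$ for $(\CA_\CB, m\one)$ as an explicit quadratic in $m$ whose coefficients come from enumerating the rank-$2$ localizations of $\CA_\CB$ of sizes two and three. Assuming freeness, the exponents $(d_1,\ldots,d_4)$ of $(\CA_\CB, m\one)$ must sum to $m \cdot |\CA_\CB|$, and by Remark~\ref{rem:GMP_LMP_balanced} the value $\GMP(2)$ is maximized on the most balanced integer tuple with that sum. A short computation shows that the leading quadratic coefficient of $\LMP(2)$ strictly exceeds the leading coefficient of the maximal $\GMP(2)$ in each case, so $\LMP(2) > \GMP(2)$ for all sufficiently large $m$; an explicit check at $m = 2, 3$ then completes the inequality on all of $m \geq 2$. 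The resulting contradiction with Theorem~\ref{MixedEquality}, invoked through Lemma~\ref{lemma:LMP_GMP_nonfree}, rules out freeness.

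\emph{Main obstacle.} The technically most delicate step is the uniform $\LMP$--$\GMP$ comparison: one must split the argument according to the parity of $m$ (rank-$2$ localizations of size three contribute $(3m/2)^2$ when $m$ is even and $\tfrac{(3m-1)(3m+1)}{4}$ when $m$ is odd, by Theorem~\ref{WakaTheo}), and account for integer rounding in the most balanced tuple when $m \cdot |\CA_\CB|$ is not divisible by $4$. The leading-coefficient gap is narrow in all three base cases (for instance $46$ versus $45.375$ for $A_{3,2}$), so the inequality must be verified by direct case analysis at small $m$ rather than by an asymptotic bound alone.
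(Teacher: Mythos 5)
Your proposal is correct, and it rests on the same two pillars as the paper's proof: reduction by localization (Lemma \ref{lemma:ind_subgraph contract localization} plus Theorem \ref{theorem: Multilocalizations are free}) to the base cases $C_4$, $\Delta_{3,1}$, $A_{3,2}$, followed by the $\LMP(2)$--$\GMP(2)$ obstruction of Lemma \ref{lemma:LMP_GMP_nonfree}. The genuine difference is in how the base cases are handled. The paper observes that $C_n$ and $\Delta_{n,k}$ localize to $C_3$ (via contractions or induced subgraphs), so Proposition \ref{proposition: C3FreeConstantMultiplicities} already eliminates every constant multiplicity except $k\in\{1,3\}$ for those two families; it then only needs to rule out the single value $\mu\equiv 3$ on $C_4$ and on $\Delta_{3,1}$, each by one numerical computation ($\LMP(2)=576>570=\GMP(2)$, resp.\ $489>486$), reserving the full parametric analysis in $k$ for $A_{3,2}$ alone, where no $C_3$ localization exists. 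You instead run the parametric analysis uniformly on all three base cases; this is heavier but sound, and your anticipated parity/rounding case split does close: e.g.\ for $C_4$ one has $N_1=18$, $N_3=24$, giving $\LMP(2)=64.5m^2$ for even $m$ and $64.5m^2-4.5$ for odd $m$, against $\GMP(2)\le 63.375m^2$ for the most balanced tuple, so the gap $1.125m^2-4.5$ is positive for odd $m\ge 3$ while $m=2$ checks directly ($258>253$); $\Delta_{3,1}$ ($54.75m^2$ vs.\ $54m^2$) and $A_{3,2}$ ($46k^2$ vs.\ roughly $45.375k^2$, exactly as in the paper) behave the same way. Two small points to tidy: in your $\Delta_{n,k}$ reduction the second vertex set $\{k,k+1,k+2,n+1\}$ requires $k\le n-2$ rather than $k\le n-1$ (for $k=n-1$ the label $k+2=n+1$ collides), though the two options still jointly cover all cases since $k\ge 2$ or $k=1\le n-2$ holds whenever $\rank\CA_G\ge 4$; and your restriction to $2\le k\le n-1$ for $A_{n,k}$ is indeed necessary, since $A_{n,1}\cong A_{n,n}\cong P_{n+1}$ carries free constant multiplicities by Terao's multi-Coxeter theorem, so the proposition must implicitly exclude these degenerate labels, a convention the paper's phrase ``every $A_{n,k}$ with $n\ge 4$ can be transformed into $A_{3,2}$'' also tacitly assumes.
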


\begin{proof}
	First let $G\in\{C_n,\Delta_{n,k}\}$. Since $G$ either has $C_3$ as an induced subgraph or becomes $C_3$ after contracting edges, we can use Lemma \ref{lemma:ind_subgraph contract localization} and Proposition \ref{proposition: C3FreeConstantMultiplicities} to focus on the cases where $c\in\{1,3\}$. If $c=1$ we know that $(\CA_G,\mu)$ is free. So let $\mu\equiv 3$ and $G=C_4$.\\
	Calculating $\LMP (2)$ we get $\nu_1=1\cdot (4+2)+ 4\cdot 2+4\cdot 1=18, \nu_2=\binom{13}{2}=78,\nu_3=78-3\cdot 18=24$ and so 
	\[\LMP (2)=18\cdot\left(\frac{3\cdot3-1}{2}\cdot\frac{3\cdot3+1}{2}\right)+24\cdot 9=576.\]
	Since $\CA_{C_4}$ consists of $13$ hyperplanes, assuming balanced exponents $(9,10,10,10)_\leq$, we get $\GMP (2)=570$. So $\LMP (2)>\GMP (2)$. Using Theorem \ref{MixedEquality}, we see that $\CA_G$ fails to be free. For a given $C_n$ (for  $n > 4$) we can contract edges until we end up with $C_4$. The result follows.
	
	Let $G=\Delta_{3,1}$ and note that every $\Delta_{n,k}\neq C_3$ can get transformed into $G$ by contracting edges. If $\mu\equiv 3$ we get $\LMP (2)=489$ and assuming exponents $(9,9,9,9)_\leq$ we get $\GMP (2)=486$. So only the simple multiplicity can be free.
	
	Let $G=A_{3,2}$. Let $\mu\equiv c$, then we get
	$$
	\GMP (2)=\begin{cases}
		45.375c^2, & \text{if }11c \equiv 0\mod 4,\\
		45.375c^2-\frac{1}{2}, & \text{if }11c \equiv 2\mod 4,\\
		45.375c^2-\frac{3}{8}, & \text{else.}\\
	\end{cases}
	$$
	Moreover,  $\LMP (2)=46c^2$ (if $c$ is even) and $46c^2-3$ (if $c$ is odd).
	Once more we see that $\LMP (2)\neq \GMP (2)$ and so, by Theorem \ref{MixedEquality}, $(\CA,\mu)$ fails to be free. Since every $A_{n,k}$ with $n\geq 4$ can be transformed into $A_{3,2}$ after contracting edges, the result follows from Lemma \ref{lemma:ind_subgraph contract localization}.  
\end{proof}

Combining the last two results we get the following.

\begin{corollary}
	\label{coro:ConstMultFree}
	Let $G\in\{P_n,A_{n,k},\Delta_{n,k},C_n\}$ , $\mu\equiv c$ $(c>1)$. Then $(\CA_G,\mu)$  is free if and only if $G=P_n$  or $G=C_3$  and $c=3$.
\end{corollary}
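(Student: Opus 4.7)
The plan is to obtain this corollary by simply assembling the results already established in Section \ref{sect:FreeConstantMultiplicities} together with Terao's theorem on Coxeter multiarrangements. Concretely, I would split the argument into the four possible shapes of $G$ and treat each case separately, noting which earlier result disposes of it.

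For the \emph{reverse implication}, if $G = P_n$, then $\CA_{P_n}$ is the braid (type $A_{n-1}$) arrangement, hence a Coxeter arrangement, and so $(\CA_{P_n}, k\one)$ is free for every $k \ge 1$ by Terao's theorem \cite{terao:free coxeter multiarrangements}. If $G = C_3$ and $k = 3$, freeness (indeed inductive freeness) was already established in the proof of Proposition \ref{proposition: C3FreeConstantMultiplicities} via the induction table in Table \ref{C3ThreeIndFree}. These cover both subcases of the desired conclusion.

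For the \emph{forward implication}, assume $(\CA_G,\mu)$ is free with $\mu \equiv k > 1$. If $G$ has rank at least $4$, meaning $G \in \{C_n (n \ge 4), \Delta_{n,k}, A_{n,k}\}$ with the appropriate size conditions, the preceding proposition already shows that only $k = 1$ can give freeness, contradicting $k > 1$. This leaves the low-rank cases: $G \in \{C_3, A_{2,1}, \ldots\}$, among which the only graph in the list $\{P_n,A_{n,k},\Delta_{n,k},C_n\}$ of rank at most $3$ not covered by Terao's theorem is $C_3$ (note $\Delta_{n,k}$ is defined only for $n \ge 2$ and for $\Delta_{2,1}$ we are essentially in the $C_3$ regime if interpreted as such; otherwise the smallest genuine path-with-triangle is already handled as $\Delta_{3,1}$, which has rank $4$). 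For $G = C_3$, Proposition \ref{proposition: C3FreeConstantMultiplicities} forces $k \in \{1,3\}$, and since $k > 1$ we must have $k = 3$.

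I do not anticipate a genuine obstacle here, as the statement is essentially a bookkeeping exercise: Terao's freeness of constant multi-Coxeter arrangements covers the path case uniformly, and the local-to-global non-freeness arguments from Lemma \ref{lemma:ind_subgraph contract localization} and Theorem \ref{theorem: Multilocalizations are free} have already been used in the previous two propositions to reduce every remaining graph to either $C_3$, $C_4$, $\Delta_{3,1}$, or $A_{3,2}$, where explicit $\LMP(2)$ versus $\GMP(2)$ comparisons (already carried out) rule out freeness for $k > 1$. The only mild subtlety is to check that every graph in the family $\{A_{n,k}, \Delta_{n,k}, C_n\}$ with $n$ large indeed contains one of these small obstructions as an induced subgraph or can be reached via edge contractions, which is exactly the content of Lemma \ref{lemma:ind_subgraph contract localization} combined with the case analysis in the preceding proposition.
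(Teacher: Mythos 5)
Your proof is correct and takes essentially the same route as the paper, which derives the corollary simply by combining Terao's theorem for the Coxeter arrangement $\CA_{P_n}$, Proposition \ref{proposition: C3FreeConstantMultiplicities} for $C_3$, and the preceding (unnumbered) proposition handling all $G\in\{C_n,\Delta_{n,k},A_{n,k}\}$ of rank at least $4$; your explicit bookkeeping that every rank-$\le 3$ member of the families is either a path or $C_3$ (with $\Delta_{2,1}=C_3$) is exactly the reduction the paper leaves implicit. One cosmetic slip: $\CA_{P_n}$ is the Coxeter arrangement of type $A_n$ (it is essential of rank $n$ in $\BBQ^n$, with $\binom{n+1}{2}$ hyperplanes), not the type $A_{n-1}$ braid arrangement, but this indexing does not affect the argument since Terao's theorem applies to constant multiplicities on any Coxeter arrangement.
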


\section{Classifying free multiplicities on connected subgraph arrangements}
\label{sect:ClassificationSomeCases}
\noindent Since connected subgraph arrangements are locally $A_2$ and admit a positive system of defining equations, 
the extension techniques introduced by Yoshinaga in \cite{yoshinaga:extendable} are well suited 
to attack the problem of classifying free multiplicities on connected subgraph arrangements and their subarrangements. 

We define a subarrangement of $\CA_{C_3}$ for which we give a complete list of free multiplicities. 
This classification was achieved previously by DiPasquale and Wakefield in \cite{dipasquale: X3 moduli freeness}, 
where this arrangement appeared in a different context, using homological algebra methods.

\begin{defn}
	We define the \emph{deleted $C_3$ arrangement} $\mathscr{D}$ as $\mathscr{D}:=\CA_{C_3}\backslash\{H_{123}\}$. The defining polynomial of $\mathscr{D}$ is given by 
	\begin{equation}
		\label{eq:D}
		Q(\mathscr{D})=x_1x_2x_3(x_1+x_2)(x_1+x_3)(x_2+x_3).
	\end{equation}    
\end{defn}

To prove the freeness of a certain family of multiplicities on $\mathscr{D}$ and to identify some free multiplicities on $\CA_{C_3}$ at the end of this section, we require the following lemma.

\begin{lemma}\label{lemma: special B2 multiplicities}
	Let $(\CA,\mu)=(\CA(B_2),\mu)$ be the multiarrangement $$Q(\CA(B_2),\mu)=x_1^ax_2^b(x_1+x_2)^c(2x_1+x_2)^d$$ with multiplicity $\mu=(a,b,c,d)$.
	\begin{enumerate}
		\item If $\mu=(2k,1,2k,1)$, then $(\CA,\mu)$ is free with exponents $(2k+1,2k+1)$.
		\item If $\mu=(2k+1,1,2k+1,1)$, then $(\CA,\mu)$ is free with exponents $(2k+1,2k+3)$.
		\item If $\mu=(k,1,k,2k-4)$ or $\mu=(k,2k-4,k,1)$, then $(\CA,\mu)$ is free with exponents $(2k-2,2k-1)$.
		\item If $\mu=(k+1,1,k+1,2k-4)$ or $\mu=(k+1,2k-4,k+1,1)$, then $(\CA,\mu)$ is free with exponents $(2k-1,2k)$.
	\end{enumerate}
\end{lemma}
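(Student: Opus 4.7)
Since $\CA(B_2)$ has rank $2$, every multiarrangement on it is automatically free by \cite[Cor.~7]{ziegler:multiarrangements}, so the content of the lemma lies entirely in determining the exponents. In a free rank $2$ multiarrangement the exponents $(d_1, d_2)$ satisfy $d_1 + d_2 = |\mu|$, which one checks in each of the four cases. My plan is therefore: in each case, exhibit two homogeneous derivations of the claimed degrees and apply Saito's criterion (Theorem~\ref{theorem: saitos criterion}) by showing that the determinant of their coefficient matrix is a nonzero scalar multiple of $Q(\CA(B_2),\mu)$.

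For cases (1) and (2), the key building block is the one-parameter family
\[
\theta_m := x_1^m\,\partial_{x_1} + \bigl((x_1+x_2)^m - x_1^m\bigr)\partial_{x_2}.
\]
The required divisibilities for $\theta_m$ are essentially automatic: $x_1^m \mid p_1$ and $(x_1+x_2)^m \mid p_1 + p_2 = (x_1+x_2)^m$ by construction, $x_2 \mid (x_1+x_2)^m - x_1^m$ by telescoping, and $(2x_1+x_2) \mid \theta_m(2x_1+x_2) = x_1^m + (x_1+x_2)^m$ precisely when $m$ is odd (the second summand then specializes to $-x_1^m$ at $x_2 = -2x_1$). For case (2), the pair $(\theta_{2k+1}, \theta_{2k+3})$ yields Saito determinant $x_1^{2k+1}(x_1+x_2)^{2k+1}\bigl[(x_1+x_2)^2 - x_1^2\bigr] = x_1^{2k+1}(x_1+x_2)^{2k+1}\cdot x_2(2x_1+x_2) = Q(\CA(B_2),\mu)$. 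For case (1), $\theta_{2k+1}$ already lies in $D(\CA(B_2),\mu)$, and the companion derivation
\[
\theta'_k := x_1^{2k}(x_1+x_2)\,\partial_{x_1} + \bigl[x_1(x_1+x_2)^{2k} - x_1^{2k}(x_1+x_2)\bigr]\partial_{x_2}
\]
of the same degree pairs with it to give Saito determinant a nonzero scalar multiple of $Q(\CA(B_2),\mu)$.

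For cases (3) and (4), the starting observation is that the linear automorphism $\sigma\colon (x_1, x_2) \mapsto (x_1, -2x_1 - x_2)$ of $\BBQ^2$ permutes the four hyperplanes of $\CA(B_2)$, fixing $\ker(x_1)$ and $\ker(x_1+x_2)$ and swapping $\ker(x_2) \leftrightarrow \ker(2x_1+x_2)$. Pullback by $\sigma$ induces a degree-preserving $\BBK$-linear isomorphism $D(\CA(B_2),(a,b,c,d)) \cong D(\CA(B_2),(a,d,c,b))$, so the two subcases in each of (3) and (4) are equivalent and it suffices to handle one, say $\mu = (k,1,k,2k-4)$ in (3) and $\mu = (k+1,1,k+1,2k-4)$ in (4). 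For the remaining subcase the strategy is to write $p_1 = x_1^k f$ (respectively $x_1^{k+1}f$) with an undetermined polynomial $f$, determine $p_2$ from $p_1 + p_2 = (x_1+x_2)^k R$ (resp.\ $(x_1+x_2)^{k+1}R$), impose $x_2 \mid p_2$, and finally use the high-order condition $(2x_1+x_2)^{2k-4} \mid 2p_1 + p_2$ to pin down the free parameters; Saito's criterion then confirms the basis.

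The main obstacle is precisely this last step in cases (3) and (4): requiring vanishing to order $2k-4$ along $\ker(2x_1+x_2)$ produces $2k-4$ scalar conditions on a derivation of total degree only $2k-2$ (resp.\ $2k-1$), so the compatibility of the system is tight and the resulting derivations are more intricate than the clean formulas $\theta_m$ of cases (1) and (2). Computing Saito's determinant explicitly will likely require either a case analysis on the parity of $k$ or an induction on $k$ via the multiarrangement addition-deletion theorem (Theorem~\ref{thm:add-del}), starting from a base case such as $k = 2$, where the multiplicity degenerates to $(2, 1, 2, 0)$ and Wakamiko's three-line formula (Theorem~\ref{WakaTheo}) applies directly.
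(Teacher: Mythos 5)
Your cases (1) and (2) are correct and complete, and in essentially the same spirit as the paper's proof: you exhibit explicit homogeneous derivations and verify Saito's criterion (Theorem~\ref{theorem: saitos criterion}). Indeed, $\theta_{2k+1},\theta_{2k+3}$ give determinant $x_1^{2k+1}(x_1+x_2)^{2k+1}\,x_2(2x_1+x_2)=Q(\CA,\mu)$ in case (2), and the pair $\theta_{2k+1},\theta'_k$ gives $-x_1^{2k}(x_1+x_2)^{2k}\,x_2(2x_1+x_2)$ in case (1); all the stated membership checks go through. The only difference from the paper is cosmetic: the paper first applies the coordinate change $x_1\mapsto x_2$, $x_2\mapsto x_1-x_2$, transporting the arrangement to $x_1x_2(x_1+x_2)(x_1-x_2)$, and writes its bases there, whereas you work directly in the original coordinates. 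Your symmetry $\sigma\colon(x_1,x_2)\mapsto(x_1,-2x_1-x_2)$ reducing the two subcases of (3) and (4) to one is also correct (the paper achieves the same reduction by simply saying the argument is symmetric in which multiplicity-one hyperplane is deleted).

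For (3) and (4), however, your proposal has a genuine gap: the entire content is deferred to an undetermined-coefficients computation --- imposing $(2x_1+x_2)^{2k-4}\mid 2p_1+p_2$ on a degree-$(2k-2)$ derivation --- which you do not carry out and which you yourself flag as the main obstacle, to be resolved by an unspecified parity analysis or induction. This attacks the problem from the hard side and misses the short argument: no basis is needed at all. Delete the multiplicity-one hyperplane (for $\mu=(k,1,k,2k-4)$ this is $\ker(x_2)$), obtaining the \emph{three-line} multiarrangement $x_1^k(x_1+x_2)^k(2x_1+x_2)^{2k-4}$, to which Theorem~\ref{WakaTheo} applies directly for every $k$, not just a degenerate base case: the total multiplicity $4k-4$ is even and the largest multiplicity is strictly below the sum of the other two minus one, so the exponents are balanced, $\exp(\CA,\mu')=(2k-2,2k-2)$. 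Since every rank-$2$ multiarrangement is free, both $(\CA,\mu')$ and $(\CA,\mu)$ are free, so by Remark~\ref{rem:AddDelThm_multi_simple} all three statements of Theorem~\ref{thm:add-del} hold for the triple, and adding the deleted hyperplane back with multiplicity one forces $\exp(\CA,\mu)=(2k-2,2k-1)$. The same two lines give $(2k-1,2k)$ in case (4) starting from $(2k-1,2k-1)$. In short, you cite Wakamiko's theorem only for the base case $k=2$ of a prospective induction, when it already settles all $k$ after a single deletion; without that observation (or a completed determination of the derivations you set up), cases (3) and (4) remain unproved in your proposal.
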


\begin{proof}
	(1) and (2): We give an explicit basis for the module of derivations $D(\CA,\mu)$ in both cases. We utilize the linear isomorphism from $\BBC^2$ to itself, defined by $x_1\mapsto x_2$ and $x_2\mapsto x_1-x_2$, sending 
 $\{x_1,x_2,x_1+x_2,2x_1+x_2\}$ to $\{x_1,x_2,x_1+x_2,x_1-x_2\}$. Thus it is sufficient to calculate a basis for $D(\mathscr{B},\nu)$, where $Q(\mathscr{B},\nu)=x_1^cx_2^a(x_1+x_2)^d(x_1-x_2)^b$.\\
    For (1), we have $Q(\mathscr{B},\nu)=x_1^{2k}x_2^{2k}(x_1+x_2)^1(x_1-x_2)^1$ and a basis for $D(\mathscr{B},\nu)$ is given by $$\theta_1=x_1^{2k}x_2dx_1+x_1x_2^{2k}dx_2\quad\text{ and }\quad\theta_2= x_1^{2k+1}dx_1+ x_2^{2k+1}dx_2.$$
    For (2), we have $Q(\mathscr{B},\nu)=x_1^{2k+1}x_2^{2k+1}(x_1+x_2)^1(x_1-x_2)^1$ and a basis for $D(\mathscr{B},\nu)$ is given by $$\theta_1=x_1^{2k+1}dx_1+x_2^{2k+1}dx_2\quad\text{ and }\quad\theta_2= (x_1^2x_2^{2k+1}-x_2^{2k+3})dx_2.$$
    Verifying that $\theta_1, \theta_2\in D(\mathscr{B},\nu)$ in both cases and checking the determinant condition in Theorem \ref{theorem: saitos criterion} (which gives $\det(p_{ij})=x_1^{2k} x_2^{2k + 2} - x_1^{2k + 2} x_2^{2k}=(x_1 - x_2) (x_1 + x_2) (-x_1^{2k}) x_2^{2k}$ in the first case and $\det(p_{ij})=(x_1 - x_2) (x_1 + x_2) x_1^{2k + 1} x_2^{2k + 1}$ in the second case) shows that in both cases $\{\theta_1,\theta_2\}$ is a basis for $D(\mathscr{B},\nu)$, which completes the proof.\\
	(3) and (4): Start with $\mu'=(k,0,k,2k-4)$ resp.~$\mu'=(k+1,0,k+1,2k-4)$ and use Theorem \ref{WakaTheo} to see that $(\CA,\mu')$ is free with exponents $(2k-2,2k-2)$ resp.~$(2k-1,2k-1)$. So $(\CA,\mu)$ has to be free with exponents $(2k-2,2k-1)$ resp.~$(2k-1,2k)$.\\
    The argument is the same for the other two multiplicities, but we delete $\ker(2x_1+x_2)$ instead.
\end{proof}

\subsection{Free multiplicities on $\mathscr{D}$.}
We first introduce a special family of multiplicities $\mu$ such that $(\mathscr{D},\mu)$ is a free multiarrangement. Then we proceed to show that there are no other free multiplicities on $\mathscr{D}$.

\begin{proposition}\label{Proposition: FreeEvenAnd1Multiplicities}
	Let $\mu=(2k,2k,2k,1,1,1)$ (for $k\in\mathbb{N}_{\geq 1}$) be a multiplicity on $\mathscr{D}$ where the order on $\mathscr{D}$ is as in \eqref{eq:D}. Then $(\mathscr{D},\mu)$ is free with exponents $(2k+1,2k+1,2k+1)$.
\end{proposition}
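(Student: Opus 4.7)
The plan is to prove freeness directly by exhibiting an explicit $S$-basis of $D(\mathscr{D},\mu)$ and invoking Saito's criterion (Theorem \ref{theorem: saitos criterion}). Since $|\mu|=6k+3=3(2k+1)$, any $S$-basis consisting of three homogeneous derivations of common polynomial degree $2k+1$ will witness the claim. Inspired by the $\mathfrak{S}_3$-symmetry of $(\mathscr{D},\mu)$ under permutation of the coordinates, for each choice of $\{i,j,l\}=\{1,2,3\}$ I propose the candidate
\[
\theta_i := (x_j+x_l)\bigl(x_i^{2k}\partial_{x_i} - x_j^{2k}\partial_{x_j} - x_l^{2k}\partial_{x_l}\bigr),
\]
which is homogeneous of polynomial degree $2k+1$.

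The membership $\theta_i\in D(\mathscr{D},\mu)$ is straightforward to verify. The conditions $\theta_i(x_r)\in (x_r)^{2k}$ for $r=1,2,3$ are immediate from the form of the coefficients. For the three ``cross'' hyperplanes $\ker(x_r+x_s)$, the case $\{r,s\}=\{j,l\}$ is trivial since $(x_j+x_l)$ already appears as an overall factor of $\theta_i$; the remaining cases reduce to the elementary divisibility
$(x_r+x_s)\mid(x_r^{2k}-x_s^{2k})$,
which holds because the right-hand side vanishes upon setting $x_r=-x_s$ (equivalently, $a^{2k}-b^{2k}$ is divisible by $a^2-b^2$).

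To apply Saito's criterion, arrange the coefficients of $\theta_1,\theta_2,\theta_3$ into a $3\times 3$ matrix over $S$. Factoring $x_r^{2k}$ from the $r$-th column and $(x_j+x_l)$ from the row indexed by $i$ (with $\{i,j,l\}=\{1,2,3\}$) leaves the constant matrix
\[
\begin{pmatrix}
1 & -1 & -1 \\
-1 & 1 & -1 \\
-1 & -1 & 1
\end{pmatrix},
\]
whose determinant equals $-4$. Hence the determinant of the Saito matrix equals $-4\,Q(\mathscr{D},\mu)$, so Theorem \ref{theorem: saitos criterion} yields that $\{\theta_1,\theta_2,\theta_3\}$ is an $S$-basis of $D(\mathscr{D},\mu)$ and $\exp(\mathscr{D},\mu)=(2k+1,2k+1,2k+1)$. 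The only step requiring any real insight is guessing the form of $\theta_i$; once these candidates are on the table, both the membership check and the $3\times 3$ scalar determinant are elementary, so I do not anticipate a serious obstacle.
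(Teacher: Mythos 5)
Your proof is correct: each $\theta_i$ does lie in $D(\mathscr{D},\mu)$ (the only nontrivial point being $(x_r+x_s)\mid x_r^{2k}-x_s^{2k}$, which uses the evenness of the exponent, exactly where the hypothesis $\mu=(2k,2k,2k,1,1,1)$ enters), and the coefficient matrix factors as you say, leaving $2I-J$ with $J$ the all-ones matrix, whose eigenvalues $-1,2,2$ give determinant $-4\neq 0$ over $\mathbb{Q}$; Theorem \ref{theorem: saitos criterion} then yields the basis and the exponents $(2k+1,2k+1,2k+1)$. However, your route is genuinely different from the paper's. The paper argues by the Addition-Deletion Theorem \ref{thm:add-del} along a chain of subarrangements: it starts from the boolean arrangement $x_1^{2k}x_2^{2k}x_3^{2k}$ with exponents $(2k,2k,2k)$, adds $(x_1+x_2)$, $(x_1+x_3)$, $(x_2+x_3)$ one at a time, and at each step computes the Euler multiplicity of the restriction via Proposition \ref{ATWEulerProp} and, for the last step, via Lemma \ref{lemma: special B2 multiplicities}(1), concluding free exponents $(2k,2k,2k+1)$, then $(2k,2k+1,2k+1)$, then $(2k+1,2k+1,2k+1)$. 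That inductive argument buys strictly more than the statement: it shows $(\mathscr{D},\mu)$ is \emph{inductively} free, a fact the paper records immediately after the proposition, and it exercises the Euler-multiplicity machinery used throughout the rest of the paper. Your explicit $\mathfrak{S}_3$-symmetric basis buys brevity and self-containedness --- note that the paper's own chain still bottoms out in an explicit rank-$2$ basis checked by Saito's criterion inside Lemma \ref{lemma: special B2 multiplicities}, so your approach simply does the analogous verification directly in rank $3$ --- but it certifies freeness only, and would not by itself justify the paper's subsequent remark on inductive freeness.
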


\begin{proof}
	The subarrangement $\mathscr{B}_0$ of $\mathscr{D}$ defined by $Q(\mathscr{B}_0,\mu_0)=x_1^{2k}x_2^{2k}x_3^{2k}$ is boolean and therefore free with exponents $(2k,2k,2k)$. Let $Q(\mathscr{B}_1,\mu_1):=x_1^{2k}x_2^{2k}x_3^{2k}(x_1+x_2)$ and $H_1=\ker(x_1+x_2)$. Then we get $\mathscr{B}_1^{H_1}=\{\ker(x_1),\ker(x_2)\}$. Using Proposition \ref{ATWEulerProp}, we get $\mu_1^*=(2k,2k)$ for the corresponding Euler-multiplicity. Therefore, $(\mathscr{B}_1^{H_1},\mu_1^*)$ is free with exponents $\exp(\mathscr{B}_1^{H_1},\mu_1^*)=(2k,2k)$.
	Using Theorem \ref{thm:add-del} we deduce that $(\mathscr{B}_1,\mu_1)$ is free with exponents $(2k,2k,2k+1)$. 
	Now let $Q(\mathscr{B}_2,\mu_2):=x_1^{2k}x_2^{2k}x_3^{2k}(x_1+x_2)(x_1+x_3)$ and $H_2=\ker(x_1+x_3)$, then we get $\mathscr{B}_2^{H_2}=\{\ker(x_1),\ker(x_2),\ker(x_1-x_2)\}$ and Proposition \ref{ATWEulerProp} gives $\mu_2^*=(2k,2k,1)$. Therefore, $(\mathscr{B}_2^{H_2},\mu_2^*)$ is free with exponents $\exp(\mathscr{B}_2^{H_2},\mu_2^*)=(2k,2k+1)$, so $(\mathscr{B}_2,\mu_2)$ is free with exponents $(2k,2k+1,2k+1)$. 
	Finally, restricting $(\mathscr{D},\mu)$ to $H_3=\ker(x_2+x_3)$ gives  $\mathscr{D}^{H_3}=\{x_1,x_2,x_1+x_2,x_1-x_2\}$ with Euler-multiplicity $\mu^*=(2k,2k,1,1)$. In the proof of Lemma \ref{lemma: special B2 multiplicities} we show that $(\mathscr{D}^{H_3},\mu^*)$ is free with exponents $\exp(\mathscr{D}^{H_3},\mu^*)=(2k+1,2k+1)$. Using Theorem \ref{thm:add-del} once more completes the proof.   
\end{proof}

The proof above actually shows that  $(\mathscr{D},\mu)$ is inductively free
with $\mu$ as in the statement of the proposition. 
Next, we prove that there are no other free multiplicities on $\mathscr{D}$. 

\subsection{Non-Free multiplicities on $\mathscr{D}$} 

The following Lemma shows that the non-trivial multiplicities as in Proposition \ref{Proposition: FreeEvenAnd1Multiplicities} all have to coincide.

\begin{lemma}\label{lemma: 3evenMustBeEqual}
	Let $Q(\mathscr{D},\mu)=x_1^ax_2^bx_3^c(x_1+x_2)^d(x_1+x_3)^e(x_2+x_3)^f$ with multiplicity $\mu=(a,b,c,d,e,f), a\geq b\geq c$. If $\mu=(even,even,even,1,1,1)$, then
		$(\mathscr{D},\mu)$ is free if and only if $a=b=c$.
\end{lemma}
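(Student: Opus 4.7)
The forward direction is exactly Proposition \ref{Proposition: FreeEvenAnd1Multiplicities}. For the converse, suppose $(\mathscr{D},\mu)$ is free with $\mu=(2a',2b',2c',1,1,1)$, $a' \geq b' \geq c' \geq 1$, and assume toward a contradiction that $(a',b',c')$ are not all equal. The plan is to apply the Addition-Deletion Theorem \ref{thm:add-del} to each of the three multiplicity-one hyperplanes $H_{12}, H_{13}, H_{23}$; since each corresponding Euler-restriction $(\mathscr{D}^{H_{ij}}, \mu^*)$ has rank two it is automatically free, so its exponents must form a $2$-element sub-multiset of $\exp(\mathscr{D},\mu) = (e_1,e_2,e_3)$, while $e_1+e_2+e_3 = |\mu| = 2(a'+b'+c')+3$. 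A short case analysis then shows these sub-multiset constraints are incompatible with the sum condition unless $a'=b'=c'$.

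First I would identify each restriction explicitly: $\mathscr{D}^{H_{ij}}$ has four lines, because the two hyperplanes $H_i$ and $H_j$ meet $H_{ij}$ along a common line while the remaining three hyperplanes project to distinct lines. In suitable coordinates on $H_{ij}\cong \BBK^2$ the restriction is a $B_2$-like multiarrangement with defining polynomial $y_1^{2p}y_2^{2q}(y_1+y_2)(y_1-y_2)$, where via Remark \ref{rem:euler} and Wakamiko's Theorem applied to the three-element rank-two localization at the doubled line one obtains $(p,q)=(a',b')$ for the restrictions to $H_{13}$ and $H_{23}$, and $(p,q)=(a',c')$ for the restriction to $H_{12}$. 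Next I would compute the exponents of this $B_2$-like multi. For $p=q$, Lemma \ref{lemma: special B2 multiplicities}(1) gives $(2p+1,2p+1)$. For $p>q$, I claim the exponents are $(2q+2, 2p)$: deleting $\ker(y_1-y_2)$ and invoking Wakamiko's Theorem (case (ii)) yields exponents $(2q+1, 2p)$ for the resulting $3$-line multi; the explicit degree-$(2q+1)$ generator $y_2^{2q}(y_1+y_2)\partial_{y_2}$ is not tangent to $y_1-y_2$ while its $(y_1-y_2)$-multiple of degree $2q+2$ is, and a degree-$2p$ derivation of the form $\lambda y_1^{2p}\partial_{y_1}+y_2^{2q}B(y_1,y_2)\partial_{y_2}$ can be made tangent to both $y_1\pm y_2$ by solving two linear conditions on $B$, which are compatible whenever $p > q$; Saito's criterion (Theorem \ref{theorem: saitos criterion}) then confirms these two derivations form a basis.

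With these formulas in hand, contradictions arise in each non-equal case. For $a'>b'=c'$, the $H_{23}$-restriction forces $\exp(\mathscr{D},\mu)=(2b'+1,2b'+1,X)$, while the $H_{12}$- and $H_{13}$-restrictions require both $2b'+2$ and $2a'$ to lie in this triple, forcing $X=2b'+2=2a'$ (hence $a'=b'+1$) -- but then $\sum e_i = 6b'+4 \neq 6b'+5 = |\mu|$. For $a'=b'>c'$, the $H_{13}$-restriction forces $\exp=(2a'+1,2a'+1,X)$, and the other two restrictions force $X=2a'=2c'+2$ (hence $c'=a'-1$), again with $\sum e_i = 6a'+2 \neq 6a'+1 = |\mu|$. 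For $a'>b'>c'$, the three restrictions collectively demand that the multiset $\{2a', 2b', 2b'+2, 2c'+2\}$ embed into the three-element multiset $\exp(\mathscr{D},\mu)$; the only possible coincidences are $a'=b'+1$ or $b'=c'+1$, and in each resulting subcase either too many distinct values remain to fit in three slots, or two separate values must each appear twice, or the resulting sum differs from $|\mu|$ by exactly $1$. The main obstacle is the exponent calculation for the $B_2$-like multiarrangement in the unequal case $p>q$, which lies outside the scope of Lemma \ref{lemma: special B2 multiplicities} and requires a bespoke rank-two addition-deletion argument.
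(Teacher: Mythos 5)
Your setup is largely sound -- the identification of the three Euler restrictions is correct (the restriction to $H_{23}$ and to $H_{13}$ both carry multiplicities $(a,b,1,1)$ and the restriction to $H_{12}$ carries $(a,c,1,1)$, in suitable coordinates $y_1^{2p}y_2^{2q}(y_1+y_2)(y_1-y_2)$), and your exponent formula $(2q+2,2p)$ for $p>q$ is also correct; indeed it follows directly, since $\theta=y_2^{2q}(y_1^2-y_2^2)\partial_{y_2}$ is the minimal-degree derivation and no nonzero derivation exists in degree $\le 2q+1$. But the pivotal step fails: from freeness of $(\mathscr{D},\mu)$ plus freeness of the rank-two restriction $(\mathscr{D}^{H_{ij}},\mu^*)$ you conclude that $\exp(\mathscr{D}^{H_{ij}},\mu^*)$ is a sub-multiset of $\exp(\mathscr{D},\mu)$. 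Theorem \ref{thm:add-del} does not give this: its statements (i)--(iii) each \emph{include} the exponent condition, and ``any two imply the third'' only transfers exponent containment when two statements hold \emph{with matching exponents}. The situation in which containment is automatic is when the arrangement \emph{and its deletion} are both free (Remark \ref{rem:AddDelThm_multi_simple}, citing ATW Thm.~0.4); you never establish freeness of the deletions $(\mathscr{D}\setminus\{H_{ij}\},\mu')$, and freeness of a multiarrangement together with freeness of its Euler restriction does not force the restriction's exponents to embed.

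Moreover, even if one grants the containment principle, your case analysis breaks in the case $a'>b'=c'$. You assert there that the $H_{23}$-restriction ``forces $\exp(\mathscr{D},\mu)=(2b'+1,2b'+1,X)$,'' but by your own (correct) computation that restriction has multiplicity pair $(a',b')$ with $a'>b'$, hence exponents $(2b'+2,2a')$, not $(2b'+1,2b'+1)$; when $b'=c'$ \emph{all three} restrictions have the unequal pair. The candidate exponents $(2b'+1,\,2b'+2,\,2a')$ then contain every restriction pair and have the correct sum $|\mu|=2a'+4b'+3$, so no contradiction is obtainable by this method in that case (your cases $a'=b'>c'$ and $a'>b'>c'$ do work, modulo the containment issue). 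The paper avoids both problems by a different route: from the free boolean sub-multiplicity $(a,b,c,0,0,0)$ and \cite[Rem.~2.8]{hogeroehrleschauenburg:free} it lists the ten possible exponent triples $(a{+}3,b,c)_\le,\dots,(a{+}1,b{+}1,c{+}1)_\le$, computes $\LMP(2)=ab+ac+bc+3a+2b+c+3$ (odd, since $a,b,c$ are even), rules out nine candidates by parity of $\GMP(2)$, and rules out $(a{+}1,b{+}1,c{+}1)_\le$ when $a>c$ by the strict inequality $\GMP(2)<\LMP(2)$, invoking Theorem \ref{MixedEquality}. To salvage your approach you would need either to prove the deletions free (so that Remark \ref{rem:AddDelThm_multi_simple} applies) or to import an independent constraint, such as the paper's mixed-product identity, to kill the surviving case.
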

\begin{proof}
	The reverse implication follows from Proposition \ref{Proposition: FreeEvenAnd1Multiplicities}.
 
	For the forward implication assume that $(\mathscr{D},\mu)$ is free.
	For some $X\in L(\CA)_2$ and the corresponding rank $2$ localization $\CA_X=\{H_1,H_2,H_3\}$ or $\CA_X=\{H_1,H_2\}$ which is free with $\exp(\CA_X,\mu_X)=(d_1^X,d_2^X)$, we write its summand $d_1^Xd_2^X$ in $\LMP (2)$ as $[\mu(H_1),\mu(H_2),\mu(H_3)]$ or $[\mu(H_1),\mu(H_2)]$ respectively. Using this notation, $\LMP (2)$ of $(\mathscr{D},\mu)$ equals
	\[\LMP (2)=[a,b,d]+[a,c,e]+[b,c,f]+[a,f]+[b,e]+[c,d]+[d,e]+[d,f]+[e,f].\]
	With our assumption $d=e=f=1$, we get 
	\[\LMP (2)=[a,b,1]+[a,c,1]+[b,c,1]+a+b+c+3,\]
	which (because of $a\geq b\geq c$ and Theorem \ref{WakaTheo}) equals 
	\[a(b+1)+a(c+1)+b(c+1)+a+b+c+3= 3 a + 2 b + c + a b + a c +  b c +3.\]
	Define the multiplicity $\mu_1=(a,b,c,0,0,0)$ on $\mathscr{D}$, then $(\mathscr{D},\mu_1)$ is free with exponents $(a,b,c)$ since it is boolean. Assuming that $(\mathscr{D},\mu)$ is free, the arrangement has exponents equal to one of $(a+3,b,c)_\leq,(a+2,b+1,c)_\leq,\dots,(a,b,c+3)_\leq$, thanks to {\cite[Rem.~2.8]{hogeroehrleschauenburg:free}}. Writing $E:=ab+ac+bc$ this gives  $\GMP (2)$ as follows:
	
	\begin{center}
		\tiny
		\begin{tabular}{|c| c| c| c| c | c |} 
			\hline
			Exponents & $(a+3,b,c)_\leq$ & $(a,b+3,c)_\leq$ & $(a,b,c+3)_\leq$ & $(a+2,b+1,c)_\leq$ & $(a+2,b,c+1)_\leq$ \\ 
			\hline
			$\GMP (2)$ & $E + 3 b + 3 c$ & $E + 3 a + 3 c$ & $E + 3 a + 3 b$ & $E + a + 2 b + 3 c + 2$ & $E + a + 3 b + 2 c + 2$ \\ 
			\hline
		\end{tabular}
		\begin{tabular}{|c| c| c| c| c | c |} 
			\hline
			Exponents & $(a+1,b+2,c)_\leq$ & $(a,b+2,c+1)_\leq$ & $(a+1,b,c+2)_\leq$ & $(a,b+1,c+2)_\leq$ & $(a+1,b+1,c+1)_\leq$ \\ 
			\hline
			$\GMP (2)$ & $E + 2 a + b + 3 c + 2$ & $E + 3 a + b + 2 c + 2$ & $E + 2 a + 3 b + c + 2$ & $E + 3 a + 2 b + c + 2$ & $E + 2 a + 2 b + 2 c + 3$ \\ 
			\hline
		\end{tabular}
	\end{center}
	Since each of $a,b,c$ is even by assumption, we get that in all but the last case (where $\exp(\mathscr{D},\mu)=(a+1, b+1, c+1)_\leq$), $\GMP (2)$ is even. But since 
	\[\LMP (2)=E+ 3 a + 2 b + c +3\]
	is odd, we see that $\LMP (2)\neq \GMP (2)$ in those cases. In particular, the arrangement cannot be free because of Theorem \ref{MixedEquality}. So let us assume $\exp(\mathscr{D},\mu)=(a+1,b+1,c+1)_\leq$ which gives $\GMP (2)=E + 2 a + 2 b + 2 c +  3$. 
	But if we do not have $a=b=c$, then $a>b$ or $b>c$. This implies $a>c$ and we have
	\[\GMP (2)= 2 a + 2 b + 2 c + E +  3 < 3 a + 2 b + c + E +3=\LMP (2).\]
	So using Theorem \ref{MixedEquality} again, we see that $(\mathscr{D},\mu)$ cannot be free and we have $a=b=c$. 
\end{proof}

Theorem \ref{theorem: Yoshinaga extendable theorem} allows us to focus on the cases where $\mu$ does not satisfy \eqref{eq:star} as follows.

\begin{corollary}\label{corollary: DnotFreeWithStar}
	Let $\mu$ be a multiplicity on $\mathscr{D}$ that satisfies \eqref{eq:star}. Then $(\mathscr{D},\mu)$  is not free. In particular, $\mathscr{D}$ is not free. 
\end{corollary}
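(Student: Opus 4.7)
The plan is to first establish the ``in particular'' clause---that the simple arrangement $\mathscr{D}$ is not free---by a direct local--global mixed product comparison, and then to deduce the general statement by invoking Yoshinaga's extendability theorem and localizing in the ambient $4$-space.

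First I would compute $\LMP(2)$ for the simple $\mathscr{D}$. By Lemma \ref{lemma: connected subgraph arrangements are locallyA2}, the rank $2$ localizations of size three correspond precisely to the three relations $\alpha_{H_i}+\alpha_{H_j}=\alpha_{H_{ij}}$ for $\{i,j\}\subseteq\{1,2,3\}$, and each contributes $1\cdot 2=2$ to $\LMP(2)$. The remaining $\binom{6}{2}-9=6$ pairs of hyperplanes span rank $2$ localizations of size two, each contributing $1$. Hence $\LMP(2)=3\cdot 2+6\cdot 1=12$. Since $|\mathscr{D}|=6$, a free $\mathscr{D}$ would have exponents of the form $(1,e_2,e_3)_\le$ with $e_2+e_3=5$, and the most balanced choice $(1,2,3)_\le$ yields $\GMP(2)=11<12$. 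Any less balanced choice gives a smaller $\GMP(2)$, so Lemma \ref{lemma:LMP_GMP_nonfree}(ii) (combined with Theorem \ref{MixedEquality}) forces $\mathscr{D}$ to be non-free.

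For the general statement, assume for contradiction that $\mu$ satisfies \eqref{eq:star} and $(\mathscr{D},\mu)$ is free. Since $\mathscr{D}$ is locally $A_2$ with the evident positive system of $0/1$-defining forms, Theorem \ref{theorem: Yoshinaga extendable theorem} applies and $\CE(\mathscr{D},\mu)$ is a free simple arrangement in $\BBC^4$. I would then localize at the line
\[
L=\ker(x_1)\cap\ker(x_2)\cap\ker(x_3)\subseteq\BBC^4.
\]
Evaluating a defining form $\alpha_H-k x_4$ at a generic point $(0,0,0,t)\in L$ gives $-kt$, so $L\subseteq \ker(\alpha_H-k x_4)$ forces $k=0$; and $\ker(x_4)$ plainly does not contain $L$. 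Consequently $\CE(\mathscr{D},\mu)_L=\{\ker(\alpha_H)\mid H\in\mathscr{D}\}$, which is the product $\mathscr{D}\times\Phi_1$. As products are free if and only if each factor is free, this localization is free if and only if $\mathscr{D}$ itself is free. By the first step this fails, contradicting Theorem \ref{theorem: Multilocalizations are free}.

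The main obstacle will be the bookkeeping in Step 1---in particular, verifying that the three triple localizations I listed are indeed the only rank $2$ localizations of size greater than two, which follows cleanly from Lemma \ref{lemma: connected subgraph arrangements are locallyA2} once one observes that $H_{123}$ has been removed from $\CA_{C_3}$. The conceptual step, identifying the localization of the Yoshinaga extension at the coordinate line with (a trivial extension of) the original simple $\mathscr{D}$, is the geometrically content-rich part, but once the shift $k$ is pinned down to zero it becomes immediate.
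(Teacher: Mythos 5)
Your proof is correct, and it splits into two halves of different character relative to the paper. The second half is essentially the paper's own argument: the paper also invokes Theorem \ref{theorem: Yoshinaga extendable theorem} and observes that $\CE(\mathscr{D},\mu)_X=\mathscr{D}$ for $X=H_1\cap H_2\cap H_3$, then concludes by Theorem \ref{theorem: Multilocalizations are free}; your pinning down of the shift $k=0$ (valid since $k=0$ lies in the range $-\tfrac{\mu(H)-1}{2}\leq k\leq\tfrac{\mu(H)}{2}$ exactly when $\mu(H)\geq 1$, an assumption both you and the paper use implicitly) makes explicit what the paper asserts without computation, and the identification with $\mathscr{D}\times\Phi_1$ is a harmless rephrasing of the paper's identification with the essentialization. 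The first half is where you genuinely diverge: the paper proves non-freeness of the simple $\mathscr{D}$ by Addition--Deletion, using that $\CA_{C_3}$ is free with exponents $(1,3,3)$ (from Cuntz--K\"uhne) while $\CA_{C_3}^{H_{123}}$ is free with exponents $(1,2)\neq(1,3)$, so by Remark \ref{rem:AddDelThm_multi_simple} the deletion $\mathscr{D}$ cannot be free; you instead run the $\LMP$/$\GMP$ comparison of Lemma \ref{lemma:LMP_GMP_nonfree}, and your count is right --- Lemma \ref{lemma: connected subgraph arrangements are locallyA2} gives exactly the three triples $\{H_i,H_j,H_{ij}\}$ as size-three rank-$2$ localizations (the triples needing $H_{123}$ are absent), so $\LMP(2)=3\cdot 2+6\cdot 1=12$, while the most balanced admissible exponents $(1,2,3)_\le$ give $\GMP(2)=11$. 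The trade-off: the paper's route is shorter but imports the exponents $(1,3,3)$ of $\CA_{C_3}$ from \cite[Prop.~3.1]{cuntzkuehne:subgrapharrangements}, whereas yours is self-contained and in the spirit of the paper's own mixed-product arguments in \S\ref{sect:FreeConstantMultiplicities}, at the cost of a small amount of counting. Both are sound.
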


\begin{proof}
	By \cite[Prop.~3.1]{cuntzkuehne:subgrapharrangements}, $\CA_{C_3}$ is free with exponents $(1,3,3)$. Let $H=H_{123}$ and note that $\CA_{C_3}^H$ is free with exponents $(1,2)$, since $\vert\CA_{C_3}^H\vert=3$. According to Theorem \ref{thm:add-del},  
 $\CA_{C_3}\backslash\{H\}=\mathscr{D}$ is not free.
	By Theorem \ref{theorem: Yoshinaga extendable theorem}, $(\mathscr{D},\mu)$ is free if and only if $\CE(\mathscr{D},\mu)$ free. However, $\CE(\mathscr{D},\mu)_X=\mathscr{D}$ for $X=H_1\cap H_2\cap H_3$. So $\CE(\mathscr{D},\mu)$ cannot be free, by Theorem \ref{theorem: Multilocalizations are free}.
\end{proof}

\noindent We use a similar argument when \eqref{eq:star} is not satisfied. For this, we define a free extension for the multiarrangement $x^ay^b(x+y)^c$ when $(a,b,c)=(even, even, odd)$. With this tool in hand, we construct an extension of $(\mathscr{D},\mu)$ which is locally free along $H_4$ for an arbitrary multiplicity $\mu$. This enables us to use Corollary \ref{Cor: LocalFreeChaPolFactor} to show that $(\mathscr{D},\mu)$ is not free.  

Next we consider a class of extensions of a given multiarrangement $(\CA,\mu)$ which generalizes the one from Definition \ref{definition: Yoshinga extension}.

\begin{defn}
	Let $(\CA,\mu)$ be a multiarrangement in $\mathbb{K}^\ell$. 
	Define the extension $\CE_\sigma(\CA,\mu)$ in $\mathbb{K}^{\ell+1}$ depending on another fixed multiplicity $\sigma:\CA\to \mathbb{Z}$ as follows:
	\[\CE_\sigma(\CA,\mu)=\{x_{\ell+1}=0\}\cup\left\{\alpha_H-kx_{\ell+1}=0 \ \middle| \ k\in\mathbb{Z}, -\frac{\mu(H)-1}{2}+\sigma(H)\leq k\leq \frac{\mu(H)}{2}+\sigma(H)\right\} .\]
	Note that $\CE_0(\CA,\mu)=\CE(\CA,\mu)$ as in Definition \ref{definition: Yoshinga extension}. 
\end{defn}

We occasionally denote a multiplicity $\mu$ by $\mu=(\mu(H_1),\mu(H_2),\dots,\mu(H_n))$, likewise for $\sigma$.
We use this notation in the following lemma.

\begin{lemma}\label{lemma: localFreenessWithoutStar}
	Let $Q(\CA,\mu)=x_1^{m_1}x_2^{m_2}(x_1+x_2)^{m_3}$ with $m_1\leq m_2$. Let $\sigma=(0,0,1)$.
	\begin{itemize}
		\item[(i)] If $\mu = (m_1,m_2,m_3)=(even,even,odd)$, then $\CE_\sigma(\CA,\mu)$ is free.
		\item[(ii)] If $\mu =  (m_1,m_2,m_3)=(even,odd,odd)$ or $(odd,even,odd)$, then $\CE_\sigma(\CA,\mu)$ is free.
	\end{itemize}
\end{lemma}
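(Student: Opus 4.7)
The plan is to verify Saito's criterion (Theorem \ref{theorem: saitos criterion}) by exhibiting an explicit basis of the derivation module $D(\CE_\sigma(\CA,\mu))$. In coordinates $(x_1,x_2,x_3)$, the extension $\CE_\sigma(\CA,\mu)$ consists of the hyperplane $\ker(x_3)$ together with three ``slabs'' of parallel lifted hyperplanes $\ker(x_1-kx_3)$, $\ker(x_2-kx_3)$, $\ker(x_1+x_2-kx_3)$, where the shift $\sigma=(0,0,1)$ displaces the third slab by one unit relative to the standard range. A direct count gives $|\CE_\sigma(\CA,\mu)|=1+m_1+m_2+m_3$, and the Ziegler restriction to $\ker(x_3)$ recovers $(\CA,\mu)$ irrespective of $\sigma$. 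By Theorem \ref{WakaTheo} (applied to the three configurations of multiplicities $(m_1,m_2,m_3)$ in cases (i) and (ii)), $(\CA,\mu)$ is free with a computable pair of exponents $(d_1,d_2)$.

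As the three candidate basis derivations, I take the Euler derivation $\theta_0=x_1\partial_1+x_2\partial_2+x_3\partial_3$ of degree $1$, together with two lifts $\theta_1,\theta_2$ of degrees $d_1,d_2$ whose restrictions to $\ker(x_3)$ recover a chosen homogeneous basis of $D(\CA,\mu)$. Since $1+d_1+d_2=1+m_1+m_2+m_3=\deg Q(\CE_\sigma(\CA,\mu))$, the determinant in Saito's criterion automatically has the correct degree; it remains to verify that each $\theta_i$ in fact lies in $D(\CE_\sigma(\CA,\mu))$, i.e.\ takes each defining linear form into its ideal, and that the determinant is non-zero (equivalently, a non-zero scalar multiple of $Q(\CE_\sigma(\CA,\mu))$).

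In case (ii), the condition \eqref{eq:star} of Theorem \ref{theorem: Yoshinaga extendable theorem} holds for $\mu$ (since at least one of $m_1,m_2$ is odd together with $m_3$), so $\CE_0(\CA,\mu)$ is free by Yoshinaga's extendability theorem; the explicit basis provided there can be modified by re-centering the product of linear factors associated to the third slab, producing a basis for $\CE_\sigma(\CA,\mu)$. In case (i), \eqref{eq:star} fails for $\mu$ at $\sigma=0$, so $\CE_0(\CA,\mu)$ is genuinely not free and the shift is essential; here, after the affine change of variables $x_i\mapsto x_i-x_3/2$ for $i=1,2$, the first two slabs become symmetric in half-integer values in $x_3$ while the shifted third slab becomes symmetric in integers, giving the extension a ``type $B_2$''-style symmetry whose defining polynomial is amenable to basis constructions analogous to those of Lemma \ref{lemma: special B2 multiplicities}. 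The main obstacle will be in case (i): writing down the correct lifts $\theta_1,\theta_2$ and carrying out the determinant computation, since the non-standard shift $\sigma=(0,0,1)$ prevents a direct appeal to Yoshinaga's extendability theorem and forces one to verify freeness by hand.
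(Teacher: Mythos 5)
Your plan has a genuine gap, and it sits exactly where you place the phrase ``it remains to verify.'' The existence of two derivations $\theta_1,\theta_2\in D(\CE_\sigma(\CA,\mu))$ of degrees $d_1,d_2$ restricting to a basis of the Ziegler restriction $D(\CA,\mu)$ is not something one can posit and then check; it is \emph{equivalent} to the freeness you are trying to prove. By Yoshinaga's criterion (Theorem \ref{Theorem:localFreeness}), a $3$-arrangement can fail to be free even though its Ziegler restriction is free --- and for rank-$2$ multiarrangements the Ziegler restriction is \emph{always} free --- precisely because the restriction map $D(\CE)\to D(\CA,\mu)$ need not be surjective in the relevant degrees. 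Your own remark that $\CE_0(\CA,\mu)$ can be non-free in case (i) while $(\CA,\mu)$ is free exhibits this failure mode, so the ``verification'' you defer is the entire content of the lemma, and no explicit lifts or determinant computations are supplied for either case. Moreover, the reduction you propose for case (ii) is unsound: no linear change of coordinates carries $\CE_0(\CA,\mu)$ to $\CE_\sigma(\CA,\mu)$, since a shear $(x_1,x_2,x_3)\mapsto(x_1+ax_3,\,x_2+bx_3,\,x_3)$ translates the $(x_1+x_2)$-slab by $a+b$, so fixing the first two slabs ($a=b=0$) forces the third to stay put; ``re-centering the product of linear factors'' in a basis for $\CE_0$ therefore does not produce derivations tangent to the different arrangement $\CE_\sigma$, and you would be back to constructing bases from scratch. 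The appeal to a $B_2$-type symmetry in case (i) is likewise only an analogy, not an argument.

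The paper's proof avoids explicit bases altogether and is worth internalizing: augment the multiplicity to $\mu_1=(m_1,m_2,m_3+1)$, for which \eqref{eq:star} \emph{does} hold, so that $\CA_1:=\CE_0(\CA,\mu_1)$ is free by Theorem \ref{theorem: Yoshinaga extendable theorem}; then observe that $\CE_\sigma(\CA,\mu)=\CA_1\setminus\{H\}$ for the single hyperplane $H=\ker\bigl(x_1+x_2+\tfrac{m_3-1}{2}x_3\bigr)$, because the shift $\sigma=(0,0,1)$ slides the window of admissible $k$'s for the third slab so that it omits exactly one hyperplane of the enlarged window. Freeness of the deletion then follows from the Addition-Deletion Theorem \ref{thm:add-del} once one shows $\exp(\CA_1^{H})=(1,\tfrac{m_1+m_2+m_3+1}{2})$, which the paper does by a purely combinatorial count of the rank-$2$ localizations along $H$ (one localization with $m_3+2$ hyperplanes, $\tfrac{m_1+m_2-(m_3+1)}{2}$ triple points, and $m_3+1$ ordinary double points, in the case $m_2-m_1\le m_3<m_1+m_2$; the remaining ranges need their own counts, per \cite[Lem.~3.1]{yoshinaga:extendable}). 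If you want to salvage your approach, you would need to actually exhibit $\theta_1,\theta_2$ and run Saito's criterion (Theorem \ref{theorem: saitos criterion}) for each parity case --- feasible in principle, but that work is currently absent, and the addition-deletion route replaces it with a finite count.
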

\begin{proof}
	The idea of the proof is to increase the multiplicity of $H_{12}$ by $1$ which results in a multiplicity $\mu_1=(m_1,m_2,m_3+1)$ such that \eqref{eq:star} is satisfied and thus $\CE(\CA,\mu_1)$ is a  free extension of $(\CA,\mu_1)$. 
	Now we remove a hyperplane $H$ so that $\CE(\CA,\mu_1)\backslash\{H\}=\CE_\sigma(\CA,\mu)$ is free, thanks to  Theorem \ref{thm:add-del}. We demonstrate this in one case while the other cases are handled analogously.
	
	For (i) let $\mu_1=(m_1,m_2,m_3+1)=(even,even,even)$. The extended arrangement $\CA_1:= \CE_0(\CA,\mu_1)$ is free, by Theorem \ref{theorem: Yoshinaga extendable theorem}. Note that for $\sigma=(0,0,1), H=\ker(x_1+x_2+(\frac{m_3-1}{2})x_3)$ and $\CA_2:= \CE_{\sigma}(\CA,\mu)$, we have $\CA_2=\CA_1\backslash \{H\}$. So we start at the free arrangement $\CA_1$, fix the hyperplane $H$, and use Theorem \ref{thm:add-del} to derive the freeness of $\CA_2$. Note that the exponents of $\CA_1$ can be derived by using Theorems \ref{WakaTheo} and \ref{theorem: ziegler restriction}. The following case by case investigation is necessary due to {\cite[Lem.~3.1]{yoshinaga:extendable}}.
	
	Assume $m_2-m_1\leq m_3 < m_1+m_2$.
	Then we have $\exp(\CA_1)=(1,\frac{m_1+m_2+m_3+1}{2},\frac{m_1+m_2+m_3+1}{2})$.
	Note that it is sufficient to show that $\vert \CA_1^{H}\vert=1+\frac{m_1+m_2+m_3+1}{2}$, since  $\CA_1^{H}$ is a simple arrangement of rank $2$ and so is free with exponents $(1,\frac{m_1+m_2+m_3+1}{2})$, satisfying the conditions of Theorem \ref{thm:add-del}. We count all elements $X\in L(\CA_1)_2$ with $X\subset H$ by listing all localizations $\CA_X$. There is a canonical bijection between the set containing those elements and the set of hyperplanes of $\CA_1^H$. We identify the hyperplanes of $\CE(\CA,\mu_1)$ with their linear forms.
			We have a single  
			localization with more than $3$ elements: $$\left\{H,x_1+x_2+\left(\frac{m_3-3}{2}\right)x_3,\dots,x_1+x_2-\frac{m_3+1}{2}x_3,x_3\right\}$$
			and a total of 
			$\frac{m_1+m_2-(m_3+1)}{2}$ localizations with $3$ elements
			\begin{multline*}
				\left\{H,x_1+\left(\frac{m_1-2}{2}\right)x_3,x_2-\left(\frac{m_1-(m_3+1)}{2}\right)x_3\right\}, \\
    \left\{H,x_1+\left(\frac{m_1-4}{2}\right)x_3,x_2-\left(\frac{m_1-(m_3+3)}{2}\right)x_3\right\}, \\ \dots, \left\{H,x_1-\left(\frac{m_2-(m_3+1)}{2}\right)x_3,x_2+\left(\frac{m_2-2}{2}\right)x_3\right\}.
			\end{multline*}
		
		The rank $2$ localization with more than $3$ elements contains $m_3+1$ elements which are not equal to $H$ while all of the localizations with $3$ elements combined contain $m_1+m_2-m_3-1$ hyperplanes distinct from $H$. Subtracting the number of hyperplanes included in these localizations from the total number of hyperplanes in $\CA_1$ shows that there are $$(m_1+m_2+(m_3+1))-(m_3+1)-(m_1+m_2-m_3-1)=m_3+1$$ hyperplanes $H'\in \CA_1$ such that $\vert(\CA_1)_{H\cap H'}\vert=2$.
	In conclusion we have $$1+\frac{m_1+m_2-(m_3+1)}{2}+(m_3+1)=1+\frac{m_1+m_2+m_3+1}{2}$$ hyperplanes in $\vert\CA_1^{H}\vert$. This implies that $\exp(\CA_1^{H})=(1,\frac{m_1+m_2+m_3+1}{2})$. By Theorem \ref{thm:add-del}, $\CA_2$ is free with exponents $\exp(\CA_2)=(1,\frac{m_1+m_2+m_3-1}{2},\frac{m_1+m_2+m_3+1}{2})$.
	The cases where $m_3+1<m_2-m_1+1$ and $m_3+1\geq m_1+m_2+1$ and part (ii) are handled analogously.
\end{proof}

\begin{theorem}\label{Theorem: AllDMultiplicities}
	Let $(\mathscr{D},\mu)$ be the multiarrangement $$Q(\mathscr{D},\mu)=x_1^ax_2^bx_3^c(x_1+x_2)^d(x_1+x_3)^e(x_2+x_3)^f$$ with $a,b,c,d,e,f \geq 1$. Then
	$(\mathscr{D},\mu)$ is free if and only if  $\mu=(2k,2k,2k,1,1,1)$ for some $k\in\mathbb{N}_{\geq 1}$ where the order of the hyperplanes is as above.
\end{theorem}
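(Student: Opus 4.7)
The reverse implication is Proposition~\ref{Proposition: FreeEvenAnd1Multiplicities}. For the forward implication, assume $(\mathscr{D},\mu)$ is free with $\mu=(a,b,c,d,e,f)$. The plan is to show that $d=e=f=1$ and that $a,b,c$ are all even and equal, invoking Lemma~\ref{lemma: 3evenMustBeEqual} to finish.

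First, Corollary~\ref{corollary: DnotFreeWithStar} tells us that $\mu$ must violate condition \eqref{eq:star} on at least one of the three size-$3$ rank~$2$ triples of $\mathscr{D}$, namely $\{H_1,H_2,H_{12}\}$, $\{H_1,H_3,H_{13}\}$, $\{H_2,H_3,H_{23}\}$ (these are the only rank~$2$ localizations of size $3$ by Lemma~\ref{lemma: connected subgraph arrangements are locallyA2}). Using the $S_3$-symmetry permuting $x_1,x_2,x_3$, one would proceed by case analysis on which of these triples fail \eqref{eq:star}: recall that failure at $\{H_1,H_2,H_{12}\}$ means $d$ is odd while $a,b$ are both even, and analogously for the other two triples.

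For each such failure pattern, I would build a modified extension $\CE_\sigma(\mathscr{D},\mu)$, setting $\sigma=1$ on each sum hyperplane $H_{ij}$ where \eqref{eq:star} fails and $\sigma=0$ elsewhere. Lemma~\ref{lemma: localFreenessWithoutStar} (applied at each triple where \eqref{eq:star} fails) together with the construction in the proof of Theorem~\ref{theorem: Yoshinaga extendable theorem} (for triples where \eqref{eq:star} holds) guarantees that $\CE_\sigma(\mathscr{D},\mu)$ is locally free along $H_{ext}=\ker(x_4)$, and the Ziegler restriction along $H_{ext}$ recovers $(\mathscr{D},\mu)$. By Corollary~\ref{Cor: LocalFreeChaPolFactor}, if some rank~$2$ localization of $\CE_\sigma(\mathscr{D},\mu)$ at a subspace $X\not\subset H_{ext}$ is not free, then $(\mathscr{D},\mu)$ itself is not free. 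The off-$H_{ext}$ localizations have the shape of $B_2$-type multiarrangements of four hyperplanes (coming from pairs of level sets $\alpha_H-kx_4$ and $\alpha_{H'}-k'x_4$ whose combination meets a third), with multiplicities determined by $\mu$ and $\sigma$. Applying Lemma~\ref{lemma: special B2 multiplicities} and Theorem~\ref{WakaTheo} (the latter via the balanced-exponent test of Lemma~\ref{lemma:LMP_GMP_nonfree}) I expect to show that outside the family $\mu=(2k,2k,2k,1,1,1)$, at least one such $B_2$-localization is not free, yielding a contradiction with the assumed freeness of $(\mathscr{D},\mu)$.

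This extension argument should force all three triples to fail \eqref{eq:star}, hence $a,b,c$ all even and $d,e,f$ all odd, and additionally push $d=e=f=1$ by ruling out higher odd values via the same $B_2$-freeness obstruction. With $\mu=(a,b,c,1,1,1)$ reduced to the even-even-even case, Lemma~\ref{lemma: 3evenMustBeEqual} gives $a=b=c=2k$. The main obstacle will be the case analysis inside the extension: identifying, in every parity configuration of $(a,b,c,d,e,f)$, the precise off-$H_{ext}$ rank~$2$ localization of $\CE_\sigma(\mathscr{D},\mu)$ whose multiplicity pattern falls outside the free list in Lemma~\ref{lemma: special B2 multiplicities}, and in particular carefully treating subcases where only one or two of the triples fail \eqref{eq:star}, since then the parities of the unshifted coordinates must be tracked through the shared hyperplanes $H_1,H_2,H_3$.
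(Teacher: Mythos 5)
Your overall skeleton matches the paper's strategy up to a point: reverse implication via Proposition~\ref{Proposition: FreeEvenAnd1Multiplicities}, reduction via Corollary~\ref{corollary: DnotFreeWithStar} to the multiplicities violating \eqref{eq:star}, the shifted extension $\CE_\sigma(\mathscr{D},\mu)$ with $\sigma=1$ exactly on the sum hyperplanes with odd multiplicity, local freeness along $\ker(x_4)$ via Lemma~\ref{lemma: localFreenessWithoutStar}, transfer of non-freeness through Corollary~\ref{Cor: LocalFreeChaPolFactor}, and the appeal to Lemma~\ref{lemma: 3evenMustBeEqual} for the case $(even,even,even,1,1,1)$. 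However, there is a genuine gap at the heart of your argument: you plan to locate the non-free localization of $\CE_\sigma(\mathscr{D},\mu)$ among its rank~$2$ localizations (your ``$B_2$-type multiarrangements of four hyperplanes'') and to detect non-freeness there via Lemma~\ref{lemma: special B2 multiplicities} and Theorem~\ref{WakaTheo}. This can never work. The extension $\CE_\sigma(\mathscr{D},\mu)$ is a \emph{simple} arrangement in $\mathbb{K}^4$, so its localizations are simple, and \emph{every} arrangement or multiarrangement of rank at most $2$ is free (Remark~\ref{rem:rank2indfree}, going back to Ziegler); moreover Lemma~\ref{lemma: special B2 multiplicities} is a list of \emph{free} rank-$2$ multiplicities, not a non-freeness criterion, and speaking of ``multiplicities determined by $\mu$ and $\sigma$'' on localizations of the simple extension conflates the multiarrangement $(\mathscr{D},\mu)$ with its simple extension. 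The paper's obstruction instead lives in rank~$3$: for each remaining parity pattern (tabulated with the appropriate $\sigma$) one exhibits a rank-$3$ localization of $\CE_\sigma(\mathscr{D},\mu)$, e.g.\ $\{H_2,H_3,H_{14},H_{23},H_{124},H_{134}\}$ at $X=H_2\cap H_3\cap H_{14}$, that is isomorphic as a simple arrangement to $\mathscr{D}$ itself, which is non-free (the ``in particular'' part of Corollary~\ref{corollary: DnotFreeWithStar}, proved by addition-deletion from $\CA_{C_3}$); Corollary~\ref{Cor: LocalFreeChaPolFactor} then kills freeness of $(\mathscr{D},\mu)$.

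A secondary inaccuracy: you assert the argument ``should force all three triples to fail \eqref{eq:star}, hence $a,b,c$ all even and $d,e,f$ all odd.'' That is not how the case analysis goes and is not true of the intermediate cases: after permuting coordinates one only assumes \emph{one} failing triple (say $d$ odd, $a,b$ even), and the paper's table explicitly covers configurations such as $(even,even,odd,odd,even,\geq 1)$ where \eqref{eq:star} fails at a single triple; non-freeness in all these cases comes from the rank-$3$ localization above, not from forcing a global parity pattern. Likewise, ruling out odd values $d,e,f\geq 3$ in the all-even case is done by the same rank-$3$ localization mechanism (third row of the table), not by any rank-$2$ obstruction. Without replacing your rank-$2$ step by the identification of these rank-$3$ sublocalizations isomorphic to $\mathscr{D}$, the proof does not go through.
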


\begin{proof}
	The reverse implication follows from Proposition \ref{Proposition: FreeEvenAnd1Multiplicities}.\\
	For the forward implication note that if at least two of the values $a,b,c$ are odd or all of $d,e,f$ are even, then $(\mathscr{D},\mu)$ is not free,  by Corollary \ref{corollary: DnotFreeWithStar}. Consequently, we can assume  (after possibly permuting some basis vectors) that at least $d$ is odd and $a,b$ are even so \eqref{eq:star} is not satisfied. If $\mu=(even,even,even,1,1,1)$ and $\mu\neq(2k,2k,2k,1,1,1)$, then we use Lemma \ref{lemma: 3evenMustBeEqual} to see that $(\mathscr{D},\mu)$ is not free.
	For the remaining multiplicities $\mu$ we give an extension of $(\mathscr{D},\mu)$ which is locally free along $H_4$, but has a localization isomorphic to $\mathscr{D}$. So $(\mathscr{D},\mu)$ cannot be free, by Corollary \ref{Cor: LocalFreeChaPolFactor}. We now give an alternative extension for all remaining multiplicities (up to a permutation of coordinates) where $\mu\neq (2k,2k,2k,1,1,1)$. Note that the local freeness along $H_4$ in the cases where \eqref{eq:star} is not met is given by Lemma \ref{lemma: localFreenessWithoutStar}. We write $H_{i4}=\ker(x_i-x_4)$ and $H_{ij4}=\ker(x_i+x_j-x_4)$. 

Let $\CA=\CE_\sigma(\mathscr{D},\mu)$ be an extension of $(\mathscr{D},\mu)$, where $\sigma$ is given as in the following table.
	\begin{center}
		\begin{tabular}{|c|c|c|c|c|c|c|c|}
			\hline
			$a$ & $b$ & $c$ & $d$ & $e$ & $f$ & $\sigma$ & Type   \\
			\hline
			even & even & odd & odd & odd & $\geq 1$  & $(0,0,0,1,1,0)$ & $(a)$ \\
			\hline
			even & even & odd & odd & even & $\geq 1$  & $(0,0,0,1,0,0)$ & $(a)$ \\
			\hline
			even & even & even & $\geq 3$& $\geq 1$ & $\geq 1$ & $\sigma(H_{i})=0$ and $\sigma(H_{ij})=1\iff \mu(H_{ij})$ odd  & $(c)$ \\
			\hline
			even & even & even & $1$ & $\geq 2$ & $\geq 2$ & $\sigma(H_{i})=0$ and $\sigma(H_{ij})=1\iff \mu(H_{ij})$ odd  & $(b)$ \\
			\hline
			even & even & even & $1$ & $1$ & $\geq 2$ & $\sigma(H_{i})=0$ and $\sigma(H_{ij})=1\iff \mu(H_{ij})$ odd  & $(a)$ \\
			\hline
		\end{tabular}
	\end{center}
The localizations of $\CA=\CE_\sigma(\mathscr{D},\mu)$ isomorphic to $\mathscr{D}$ are:
	\begin{enumerate}
		\item[(a)] $\CA_X=\{H_2,H_3,H_{14},H_{23},H_{124},H_{134}\}$ for $X=(H_2\cap H_3\cap H_{14})\in L(\CA)$,
		\item[(b)] $\CA_X=\{H_1,H_3,H_{24},H_{13},H_{124},H_{234}\}$ for $X=(H_1\cap H_3\cap H_{24})\in L(\CA)$,
		\item[(c)] $\CA_X=\{H_{1},H_{2},H_{34},H_{12},H_{134},H_{234}\}$ for $X=(H_1\cap H_2\cap H_{34})\in L(\CA)$.
	\end{enumerate}

We now show how the table above can be applied to one of the remaining multiplicities $\mu$ to derive the non-freeness for $(\mathscr{D},\mu)$. So let $\mu=(2,4,4,1,1,3)$. First note that since in this case $a,b,c$ are even and $d=e=1$, we use the last row of the table. The second to last column tells us that $\sigma=(0,0,0,1,1,1)$, since all of the values $\mu(H_{ij})$ are odd. Using the notation of Lemma \ref{lemma: localFreenessWithoutStar} and choosing an arbitrary $X\in L(\mathscr{D})_2$ with $\vert\mathscr{D}_X\vert=3$, we have parities $(even,even,odd)$ on $\mathscr{D}_X$ and $\sigma\vert_X=(0,0,1)$. So Lemma \ref{lemma: localFreenessWithoutStar} gives us the local freeness of $\CE_\sigma(\mathscr{D},\mu)$ along $H_4$. Since $\CE_\sigma(\mathscr{D},\mu)$ has localization $(a)$ with hyperplanes $$\{\ker(x_2),\ker(x_3),\ker(x_1-x_4),\ker(x_2+x_3),\ker(x_1+x_2-x_4),\ker(x_1+x_3-x_4)\},$$ (which is isomorphic to $\mathscr{D}$ and therefore non-free) we can apply Corollary \ref{Cor: LocalFreeChaPolFactor} to show that $(\mathscr{D},\mu)$ is not free.
	This completes the proof of the theorem.
\end{proof}

Deleting a hyperplane from $\CA_{C_3}$ results in either $\CA_{P_3}=\CA(A_3)$ or in $\mathscr{D}$. All free multiplicities on the reflection arrangement $\CA(A_3)$ have been classified, see \cite{abeteraowakefield:euler}, \cite{abenuidanumata:signedeliminable}, \cite{dipasquale: free A3 classification}. Since all free multiplicities are known on both types of deletions on $\CA_{C_3}$, the next step is to classify all free multiplicities on $\CA_{C_3}$ itself. 

\subsection{Some free multiplicities on $\CA_{C_3}$}
In this final section we present some free multiplicities on $\CA_{C_3}$ that we found by starting with a free multiplicity on $\CA_{P_3}$ or $\mathscr{D}$ and adding a hyperplane via Theorem $\ref{thm:add-del}$ to create a free multiplicity on  $\CA_{C_3}$. It turns out that there is only one free multiplicity on $\mathscr{D}$ that extends to a free multiplicity on $\CA_{C_3}$ in such a way. 

\begin{corollary}
Let $H=H_{123}$ and $$(\CA_{C_3},\mu)=x_1^{2k}x_2^{2k}x_3^{2k}(x_1+x_2)^1(x_1+x_3)^1(x_2+x_3)^1(x_1+x_2+x_3)^1.$$ 
Then $(\CA_{C_3},\mu)$ is free if and only if $k=1$.
\end{corollary}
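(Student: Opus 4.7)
The plan is to apply the Addition-Deletion Theorem~\ref{thm:add-del} with distinguished hyperplane $H=H_{123}$. Since $\mu(H)=1$, the deletion is the multiarrangement $(\mathscr{D},\mu')$ with $\mu'=(2k,2k,2k,1,1,1)$, which by Proposition~\ref{Proposition: FreeEvenAnd1Multiplicities} is free with exponents $(2k+1,2k+1,2k+1)$. So the entire question reduces to a computation of the restriction $(\CA_{C_3}^{H},\mu^*)$ and a comparison of exponents.

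For the restriction, I introduce coordinates $(x_1,x_2)$ on $H$ via $x_3=-x_1-x_2$; then each of the six hyperplanes of $\CA_{C_3}\setminus\{H\}$ restricts to one of the three lines $\{x_1=0\}$, $\{x_2=0\}$, $\{x_1+x_2=0\}$, since $H_i\cap H=H_{jk}\cap H$ whenever $\{i,j,k\}=\{1,2,3\}$. Hence $\CA_{C_3}^{H}$ is the $A_2$ arrangement. For each line $X$, the rank-$2$ localization $(\CA_{C_3})_X=\{H,H_i,H_{jk}\}$ carries multiplicities $(1,2k,1)$; Theorem~\ref{WakaTheo}(ii) yields exponents $(2,2k)$, while the deleted localization $(\CA_X',\mu_X')$ on $\{H_i,H_{jk}\}$ with mults $(2k,1)$ is boolean with exponents $(2k,1)$. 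Their common non-trivial exponent, via Remark~\ref{rem:euler}, gives $\mu^*(X)=2k$. Consequently $(\CA_{C_3}^{H},\mu^*)=(\CA(A_2),(2k,2k,2k))$, and Theorem~\ref{WakaTheo}(i) (sum $6k$ even, $k_3=2k<k_1+k_2-1=4k-1$ for every $k\ge 1$) shows this is free with exponents $(3k,3k)$.

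For the forward implication, suppose $(\CA_{C_3},\mu)$ is free. Then both $(\CA_{C_3},\mu)$ and $(\mathscr{D},\mu')$ are free, so by Remark~\ref{rem:AddDelThm_multi_simple} all three statements of Theorem~\ref{thm:add-del} hold with matching exponents: there must exist $b_1,b_2,b_3$ satisfying $\{b_1,b_2\}=\{3k,3k\}$ (from the restriction) and $\{b_1,b_2,b_3-1\}=\{2k+1,2k+1,2k+1\}$ (from the deletion). A multiset comparison forces $3k=2k+1$, i.e., $k=1$. Conversely, for $k=1$ the template is met with $b_3=4$, and Theorem~\ref{thm:add-del} delivers the freeness of $(\CA_{C_3},(2,2,2,1,1,1,1))$ with exponents $(3,3,4)$.

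The principal obstacle is the correct determination of $\mu^*$, since neither case of Proposition~\ref{ATWEulerProp} applies directly to the localizations $\{H,H_i,H_{jk}\}$ with mults $(1,2k,1)$; once Remark~\ref{rem:euler} (combined with Theorem~\ref{WakaTheo}) pins down $\mu^*\equiv 2k$, the rest of the argument reduces to a routine multiset comparison of exponents.
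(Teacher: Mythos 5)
Your proof is correct and takes essentially the same route as the paper: delete $H_{123}$ to reduce to the free multiarrangement $(\mathscr{D},(2k,2k,2k,1,1,1))$ with exponents $(2k+1,2k+1,2k+1)$, determine the Euler multiplicity $\mu^*\equiv 2k$ on the $A_2$ restriction via Wakamiko's theorem so that $\exp(\CA_{C_3}^{H},\mu^*)=(3k,3k)$, and conclude $k=1$ by matching exponents through the Addition--Deletion theorem together with Remark \ref{rem:AddDelThm_multi_simple}. Your explicit justification of $\mu^*$ via Remark \ref{rem:euler} (noting that Proposition \ref{ATWEulerProp} does not apply) merely spells out a step the paper leaves implicit.
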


\begin{proof}
Let $H=H_{123}$. Owing to  Theorem \ref{Theorem: AllDMultiplicities}, we see that $(\CA_{C_3}',\mu')$ is free with exponents $(2k+1,2k+1,2k+1)$. 
\noindent Now restrict $(\CA_{C_3},\mu)$ to $H$ and calculate $\mu^*$ by using Theorem \ref{WakaTheo} on the following rank $2$ flats:
$$\{H_{123},H_{12},H_{3}\},\{H_{123},H_{2},H_{13}\},\{H_{123},H_{1},H_{23}\}.$$
This gives $\mu^*=(2k,2k,2k)$ and therefore (again by Theorem \ref{WakaTheo}) $(\CA_{C_3}^H,\mu^*)$ is free with $\exp(\CA_{C_3}^H,\mu^*)=(3k,3k)$. So by Remark \ref{rem:AddDelThm_multi_simple} $(\CA_{C_3},\mu)$ is free if and only if $k=1$.
\end{proof}

\begin{defn}[{\cite[Def.~5.8]{abeteraowakefield:euler}}]
\label{def:ss}
    An arrangement $\CA$ is supersolvable if there exists a filtration
$\CA = \CA_r \supset \CA_{r-1}\supset\dots\supset \CA_2 \supset \CA_1$
such that:
\begin{itemize}
    \item $\rank(\CA_i ) = i$ for  $i = 1, \dots , r$;
    \item for any $H, H'\in \CA_i$, there exists some $H''\in \CA_{i-1}$ such that $H\cap H'\subset H''$.
\end{itemize} 
\end{defn}

\begin{theorem}[{\cite[Thm.~5.10]{abeteraowakefield:euler}}]\label{theorem: free vertex}
Suppose $\CA$ is supersolvable with a filtration $\CA=\CA_r\supset\CA_{r-1}\supset\dots\supset\CA_2\supset\CA_1$ and $r\geq 2$
as in Definition \ref{def:ss}.
Let $\mu$ be a multiplicity on $\CA$ and let $\mu_i = \mu\vert_{\CA_i}$. Let $\exp(\CA_2,\mu_2)=(e_1,e_2,0,\dots,0)$. Suppose that for each $H'\in \CA_d\backslash \CA_{d-1}$, $H''\in\CA_{d-1}$ $(d=3,\dots,r)$ and $X:=H'\cap H''$, we either have $$\CA_X=\{H',H''\}$$ or 
$$\mu(H'')\geq\left(\sum_{X\subset H\in \CA_d\backslash\CA_{d-1}}\mu(H)\right)-1.$$ Then $(\CA,\mu)$ is inductively free with $\exp(\CA,\mu)=(e_1,e_2,\vert\mu_3\vert-\vert\mu_2\vert,\dots,\vert\mu_r\vert-\vert\mu_{r-1}\vert,0,\dots,0)$.
\end{theorem}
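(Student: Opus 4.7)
The plan is induction on the rank $r$. The base case $r=2$ is immediate from Remark \ref{rem:rank2indfree}: every rank-$2$ multiarrangement is inductively free, with exponents $(d_1,d_2)$ by hypothesis.

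For the inductive step ($r\geq 3$), apply the inductive hypothesis to $\CA_{r-1}$ with the restricted filtration $\CA_{r-1}\supset\CA_{r-2}\supset\dots\supset\CA_1$ and multiplicity $\mu_{r-1}$; the hypotheses of the theorem restrict trivially from $\CA$ to $\CA_{r-1}$. This yields $(\CA_{r-1},\mu_{r-1})\in\CIFM$ with exponents $(d_1,d_2,|\mu_3|-|\mu_2|,\dots,|\mu_{r-1}|-|\mu_{r-2}|,0,\dots,0)$. I would then enumerate $\CA_r\setminus\CA_{r-1}=\{K_1,\dots,K_m\}$ and build up $(\CA,\mu)$ from $(\CA_{r-1},\mu_{r-1})$ by adding these hyperplanes one unit of multiplicity at a time. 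At each step, Theorem \ref{thm:add-del} is invoked to propagate inductive freeness, with the ``new'' exponent being the final one; this grows from $0$ to $|\mu_r|-|\mu_{r-1}|$ as desired.

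The central step is to verify, at each addition of a unit of $\mu(K_i)$, that the Euler-restriction $(\CA^{K_i},\mu^*)$ is inductively free with exponents exactly $(d_1,d_2,|\mu_3|-|\mu_2|,\dots,|\mu_{r-1}|-|\mu_{r-2}|)$. I would achieve this by showing $(\CA^{K_i},\mu^*)$ is itself supersolvable of rank $r-1$ (with induced filtration $\CA_d^{K_i}:=\{H\cap K_i\mid H\in\CA_d\}$ for $d\leq r-1$) and satisfies the hypotheses of the theorem, then applying the outer induction. The Euler multiplicity $\mu^*$ is computed case by case on flats $X=K_i\cap H''$: Proposition \ref{ATWEulerProp}(i) yields $\mu^*(X)=\mu(H'')$ when $|\CA_X|=2$; in the dominant-multiplicity regime, where $\mu(H'')\geq\bigl(\sum_{H\in\CA_r\setminus\CA_{r-1},\,X\subset H}\mu(H)\bigr)-1$, a generalization of Wakamiko's Theorem \ref{WakaTheo}(ii) to rank-$2$ multiarrangements with possibly more than three lines provides a closed form for $\mu^*(X)$ in terms of the original multiplicities.

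The main obstacle is precisely this bookkeeping. One must verify that the dominance hypothesis of the theorem persists on the Euler-restriction: for each flat of $\CA^{K_i}$ arising from a rank-$3$ flat $Y$ of $\CA$ with $Y\subset K_i$, the Euler multiplicity $\mu^*$ must continue to satisfy the dominance inequality of the theorem. Tracing this through requires careful analysis of how hyperplanes in different levels of the filtration meet $K_i$, and how the dominance of $\mu$ over the top layer is inherited by $\mu^*$ over the corresponding layer of $\CA^{K_i}$. A dimension count then confirms that the non-terminal exponents of $(\CA^{K_i},\mu^*)$ coincide with those of $(\CA_{r-1},\mu_{r-1})$, which is exactly what Theorem \ref{thm:add-del} needs to certify the inductive-freeness step and close the induction.
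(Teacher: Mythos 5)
The paper you were asked to match contains no proof of this statement: Theorem \ref{theorem: free vertex} is imported verbatim from \cite[Thm.~5.10]{abeteraowakefield:euler} and used as a black box (in the proof of Proposition \ref{prop:SpecialFreeMultC_3}), so the only benchmark is the original Abe--Terao--Wakefield argument. Your skeleton coincides with it: outer induction on $r$ along the supersolvable filtration, building the top layer $\CA_r\setminus\CA_{r-1}$ one unit of multiplicity at a time via Theorem \ref{thm:add-del}, with the Euler multiplicity pinned down as $\mu^*(K_i\cap H'')=\mu(H'')$ --- by Proposition \ref{ATWEulerProp}(i) at two-element flats, and by an unbalancedness computation in the dominated case --- so that the restriction carries the data of $(\CA_{r-1},\mu_{r-1})$ and the induction closes. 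The route is the right one.

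But as written your proposal is an outline rather than a proof, and the gaps sit exactly at the load-bearing points. (1) The ``generalization of Wakamiko's Theorem \ref{WakaTheo}(ii)'' --- that a rank-$2$ multiarrangement with $k\geq 3$ lines and $\mu(H'')\geq\bigl(\sum_{H\neq H''}\mu(H)\bigr)-1$ has exponents $\bigl(\sum_{H\neq H''}\mu(H),\,\mu(H'')\bigr)$, whence by Remark \ref{rem:euler} $\mu^*(X)=\mu(H'')$, and that this persists at every intermediate stage because partial top-layer multiplicities are smaller --- is the hinge of the whole argument; the excerpt of \cite{abeteraowakefield:euler} reproduced as Proposition \ref{ATWEulerProp} does not cover this case, so you must state and prove it, not merely invoke it. (2) You never address injectivity of the map $\CA_{r-1}\to\CA^{K_i}$, $H''\mapsto H''\cap K_i$, equivalently that no top-layer hyperplane $K_i$ contains a rank-$2$ flat $H''_1\cap H''_2$ of $\CA_{r-1}$; this requires a lemma exploiting modularity in Definition \ref{def:ss}. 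Without it, two hyperplanes of $\CA_{r-1}$ could collapse to a single hyperplane of the restriction, giving $|\mu^*|<|\mu_{r-1}|$, and your concluding ``dimension count'' together with the exponent matching demanded by Theorem \ref{thm:add-del} would simply fail. (3) The inheritance of the theorem's hypotheses by $(\CA^{K_i},\mu^*)$ is nontrivial in two respects you leave untouched: the induced rank-$2$ bottom must again have exponents $(d_1,d_2)$ (this needs a linear identification of the essentialization of $\{H''\cap K_i\mid H''\in\CA_2\}$ with that of $\CA_2$, since for four or more lines rank-$2$ exponents are not combinatorial), and the dominance condition must transfer even though a hyperplane can contain $X\cap K_i$ without containing $X=H'\cap H''$, so the relevant localizations of $\CA^{K_i}$ need not correspond bijectively to those of $\CA$. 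You explicitly flag this cluster as ``the main obstacle'' and then do not resolve it; these verifications are precisely the content of the proof in \cite{abeteraowakefield:euler}, so the proposal, while strategically sound, is incomplete.
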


We now use Theorem \ref{theorem: free vertex} to derive some 
classes of multiplicities on $\CA_{C_3}$.

\begin{proposition}
	\label{prop:SpecialFreeMultC_3}
	Let $(\CA_{C_3},\mu)$ be the multiarrangement $$Q(\CA_{C_3},\mu)=x_1^ax_2^bx_3^c(x_1+x_2)^d(x_1+x_3)^e(x_2+x_3)^f(x_1+x_2+x_3)^g$$ with multiplicity $\mu=(a,b,c,d,e,f,g), (d\geq e\geq f)$. Suppose that
	\begin{enumerate}
		\item[(i)] $\mu=(k,k,k,r,1,1,k)$, where $1\leq k \leq 3$ arbitrary and $r\geq 1$, or
		\item[(ii)] $\mu=(k,k,k,r,1,1,k)$, where $k>3$ arbitrary and $r\geq 
  2k-5$.
	\end{enumerate}
	Then $(\CA_{C_3},\mu)$ is free. Moreover, if $r\geq 2k$, then $(\CA_{C_3},\mu)$ is  inductively free with exponents $(r,2k+1,2k+1)$.
    For $\mu = (3,3,3,1,1,1,3)$, $(\CA_{C_3},\mu)$ is not inductively free.
\end{proposition}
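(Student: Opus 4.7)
The plan is to establish the inductive freeness claim (for $r \geq 2k$ with exponents $(r, 2k+1, 2k+1)$) by induction on $r$, using Theorem \ref{thm:add-del} with $H_0 = H_{12}$ at each step. The crucial computation is the restriction $(\CA_{C_3}^{H_{12}}, \mu^\ast)$: by Lemma \ref{lemma: connected subgraph arrangements are locallyA2} the rank-$2$ flats of $\CA_{C_3}$ meeting $H_{12}$ come from the two triple localizations $\{H_1, H_2, H_{12}\}$ and $\{H_3, H_{12}, H_{123}\}$ together with the two pairs $\{H_{13}, H_{12}\}$ and $\{H_{23}, H_{12}\}$. Applying Theorem \ref{WakaTheo} to the triple localizations (each of which has exponents $(2k, r)$ provided $r \geq 2k$) together with Proposition \ref{ATWEulerProp} gives $\mu^\ast = (2k, 2k, 1, 1)$ on the four lines of $\CA_{C_3}^{H_{12}}$, which form a copy of $\CA(B_2)$; Lemma \ref{lemma: special B2 multiplicities}(1) then yields freeness of the restriction with exponents $(2k+1, 2k+1)$. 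Since rank-$2$ multiarrangements are automatically inductively free by Remark \ref{rem:rank2indfree} and $(2k+1, 2k+1) \subseteq (r-1, 2k+1, 2k+1)$, Theorem \ref{thm:add-del} combined with Definition \ref{def:indfree} pushes the induction from $r-1$ to $r$.

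The base case $r = 2k$ of this induction needs a separate argument. My plan is to invoke Theorem \ref{thm:add-del} again, this time with $H_0 = H_{13}$ (multiplicity $1$). An analogous Euler-multiplicity calculation identifies $(\CA_{C_3}^{H_{13}}, \mu^\ast)$ as another copy of $\CA(B_2)$, now with multiplicities $(k, k, 2k, 1)$; I would prove freeness with exponents $(2k, 2k+1)$ by exhibiting explicit derivations satisfying Saito's criterion (Theorem \ref{theorem: saitos criterion}), generalizing the basis construction in the proof of Lemma \ref{lemma: special B2 multiplicities}. The deletion is $(\CA_{P_3}, \nu)$ with $\nu = (k, k, k, 2k, 1, k)$; its inductive freeness with exponents $(2k, 2k, 2k+1)$ I would handle by a nested Addition-Deletion argument in $\CA_{P_3}$, iteratively lowering the multiplicity on $H_{12}$ down to a small base case that can be verified directly via an induction table. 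For the remaining plain freeness claims in the subranges $1 \leq r < 2k$ (case (i)) and $2k-5 \leq r < 2k$ (case (ii)), I would propagate freeness downward from the inductively free cases using the ``two-of-three'' form of Theorem \ref{thm:add-del} (Remark \ref{rem:AddDelThm_multi_simple}), invoking Theorem \ref{theorem: Yoshinaga extendable theorem} whenever the parity condition \eqref{eq:star} holds at the relevant multiplicity.

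For the non-inductive-freeness of $\mu = (3,3,3,1,1,1,3)$, the $S_3$-symmetry of $\CA_{C_3}$ partitions its hyperplanes into orbits $\{H_1, H_2, H_3\}$, $\{H_{12}, H_{13}, H_{23}\}$, $\{H_{123}\}$, so only three essentially distinct choices of $H_0$ must be ruled out: $H_0 \in \{H_1, H_{12}, H_{123}\}$. For each choice, I would compute the Euler multiplicity on the restriction, deduce what exponents the deletion would need for the inductive step to close, and show that no valid continuation exists---either by exhibiting an exponent mismatch or by tracing the inductive chain down far enough to reach an arrangement already known not to be inductively free. The main obstacle I anticipate is establishing the inductive freeness of $(\CA_{P_3}, (k, k, k, 2k, 1, k))$ with the prescribed exponents $(2k, 2k, 2k+1)$: this nested induction requires careful bookkeeping of how each Addition-Deletion step moves the exponents, and one must ensure that the chosen hyperplane strictly decreases both the arrangement and the multiplicity in a way that keeps the hypotheses of Theorem \ref{thm:add-del} satisfied all the way down to the base case.
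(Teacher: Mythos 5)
Your architecture for $r \geq 2k$ is sound and in fact parallels the paper: induction on $r$ via addition of $H_{12}$, with the restriction $(\CA_{C_3}^{H_{12}},\mu^*)$ correctly identified as the four lines $x^{2k}y^{2k}(x+y)(x-y)$ (the two triple points contribute $2k$ by Remark \ref{rem:euler}, the two double points contribute $1$), so Lemma \ref{lemma: special B2 multiplicities}(1) gives exponents $(2k+1,2k+1)$ and the inductive step closes. The genuine gap is your base case $r=2k$. Deleting $H_{13}$ (equivalently $H_{23}$, as the paper does) reduces everything to the inductive freeness of the multi-path arrangement $(\CA_{P_3},(k,k,k,2k,1,k))$ with exponents $(2k,2k,2k+1)$ --- this is the load-bearing step, and your proposed method cannot close it: after ``iteratively lowering the multiplicity on $H_{12}$'' you are still left with multiplicities depending on the parameter $k$, and an induction table is a finite numeric certificate for one specific multiplicity, not an argument uniform in $k$; turning the bookkeeping symbolic is precisely the unproven content. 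The paper avoids this entirely by applying the supersolvable criterion (Theorem \ref{theorem: free vertex}) to the filtration $\{H_1\}\subset\{H_1,H_2,H_{12}\}\subset\{H_1,H_2,H_3,H_{12},H_{13},H_{123}\}$, where the hypotheses reduce to three one-line inequalities ($k\geq k$, $k\geq k$, $r\geq 2k-1$), yielding inductive freeness with exponents $(r,2k,2k+1)$ for \emph{all} $k$ and all $r\geq 2k-1$ at once; a single addition of the deleted hyperplane (whose restriction has $\mu^*=(k,k,r,1)$, free with exponents $(2k+1,r)$ for $r\geq 2k$) then finishes the $r\geq 2k$ claim.

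The second gap is the range $r<2k$, which carries the actual content of case (ii). The working mechanism (the paper's) is to delete $H_{12}$ downward from $r=2k$: below $r=2k$ the Euler multiplicity on $\CA_{C_3}^{H_{12}}$ changes parity --- the common exponent of $(k,k,r)$ and $(k,k,r-1)$ becomes $2k-1$, then $2k-2$, etc. --- so one alternates between parts (1) and (2) of Lemma \ref{lemma: special B2 multiplicities}, and it is exactly the chain of exponent inclusions required by Theorem \ref{thm:add-del} that terminates at $r=2k-5$, where the exponents reach $(2k-1,2k-1,2k-1)$ and the next restriction exponent $2k-3$ no longer fits. Your proposal neither computes these odd restriction multiplicities nor accounts for the threshold $2k-5$, and the fallback on Theorem \ref{theorem: Yoshinaga extendable theorem} is a non-starter for establishing freeness: under \eqref{eq:star} it converts freeness into extendability, which still requires exhibiting a free extension (no easier --- the paper uses extendability only for \emph{non}-freeness arguments on $\mathscr{D}$), and for several parities of $k$ and $r$ condition \eqref{eq:star} simply fails for $\mu=(k,k,k,r,1,1,k)$. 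Finally, your orbit-by-orbit plan for the non-inductive-freeness of $(3,3,3,1,1,1,3)$ would work but is heavier than necessary: once the chain gives $\exp(\CA_{C_3},\mu)=(5,5,5)$, Remark \ref{rem:AddDelThm_multi_simple} forces any valid restriction to have exponents $(5,5)$, hence order $10$, while $|\mu^*|\leq 9$ for every choice of $H_0$; this single exponent-sum mismatch disposes of all three orbits simultaneously.
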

\begin{proof}
Let $\mu(H_1)=\mu(H_2)=\mu(H_3)=\mu(H_{123})=k$ and $\mu(H_{12})=r\geq 2k-1,\mu(H_{13})=\mu(H_{23})=1$ and fix $H=H_{23}$, so $\mu'=(k,k,k,r,1,0,k)$. We choose the following supersolvable filtration for $(\CA_{C_3}',\mu')$:
$$\{H_1\}\subset\{H_1,H_2,H_{12}\}\subset\{H_1,H_2,H_3,H_{12},H_{13},H_{123}\}.$$
This filtration satisfies the requirements of Theorem \ref{theorem: free vertex}, because 
$\mu'(H_{1})\geq \mu'(H_{3})+\mu'(H_{13})-1\iff k\geq k$ and $\mu'(H_{2})\geq \mu'(H_{13})+\mu'(H_{123})-1\iff k\geq k$ and
$\mu'(H_{12})\geq \mu'(H_{3})+\mu'(H_{123})-1\iff 2k-1\geq 2k-1$. So $(\CA_{C_3}',\mu')$ is inductively free with exponents $(r,2k,2k+1)$. Restricting $(\CA_{C_3},\mu)$ to $H_{23}$ gives hyperplanes in correspondence to the rank $2$ flats $$\{H,H_2,H_3\},\{H,H_1,H_{123}\},\{H,H_{12}\},\{H,H_{13}\}$$ with $\mu^*=(k,k,r,1)$.
If $r\geq 2k$, then the restriction is free with exponents $(2k+1,r)$ since deleting the hyperplane with multiplicity $1$ gives a free arrangement with exponents $(2k,r)$ and using Theorem \ref{thm:add-del} shows that $(\CA_{C_3},\mu)$ is inductively free with exponents $(r,2k+1,2k+1)$.
Deleting $H_{12}$ with Theorem \ref{thm:add-del} and Lemma \ref{lemma: special B2 multiplicities} until $r=2k-5$ (and we end up at exponents $(2k-1,2k-1,2k-1)$) shows the other claim. In particular: If $r=3$, then $\mu=(3,3,3,1,1,1,3)$ and $(\CA_{C_3},\mu)$ is free with exponents $\exp(\CA_{C_3},\mu)=(5,5,5)$. Restricting to an arbitrary $H\in\CA_{C_3}$ results in $\vert(\CA_{C_3},\mu^*)\vert=8$. So Remark \ref{rem:AddDelThm_multi_simple} shows that $(\CA_{C_3}',\mu')$ can never be free and therefore $(\CA_{C_3},\mu)$ is neither inductively nor additively free. 
\end{proof}

    It can be shown that $\mu=(3,3,3,1,1,1,3)$ is the minimal multiplicity (with respect to $\vert\mu\vert$) on $\CA_{C_3}$ that is free but not inductively free.

{\bf Acknowledgments}: The authors acknowledge the financial support of the DFG-priority
program SPP 2458 ``Combinatorial Synergies'' (Grant \#RO 1072/25-1 (project number: 539865068) to G.~R\"ohrle). We are grateful to M.\ DiPasquale, T.~Hoge, and M.\ Wakefield for some helpful comments.
We would also like to thank the anonymous referee for pointing out some improvements in the exposition.

\newcommand{\etalchar}[1]{$^{#1}$}
\providecommand{\bysame}{\leavevmode\hbox to3em{\hrulefill}\thinspace}
\providecommand{\MR}{\relax\ifhmode\unskip\space\fi MR }
\bibliographystyle{plain}

\clearpage
\appendix
\section{Induction and Recursion tables}
\label{apdx:TablesA}


\begin{table}[H]
	\renewcommand{\arraystretch}{0.8}
	\begin{tabular}{lll}
		\hline
		$\exp(\CA,\mu)$ & $\alpha_H$ & $\exp(\CA'',\mu^*)$ \\
		\hline
		\hline
		$\exp(\CA_{G_1},\mu_1)=(4,5,5,5)$ & $H_{123}$ & $(4,5,5)$ \\
		$(4,4,5,5)$ & $H_{34}$ & $(4,4,5)$ \\
		$(4,4,4,5)$ & $H_{12}$ & $(4,4,5)$ \\
		$(3,4,4,5)$ & $H_{13}$ & $(3,4,5)$ \\
		$(3,3,4,5)$ & $H_{134}$ & $(3,3,4)$ \\
		$(3,3,4,4)$ & $H_{234}$ & $(3,3,4)$ \\
		$(3,3,3,4)$ & $H_{3}$ & $(3,3,3)$ \\
		$(3,3,3,3)$ & $H_{3}$ & $(3,3,3)$ \\
		$(2,3,3,3)$ & $H_{1234}$ & $(2,3,3)$ \\
		$(2,2,3,3)$ & $H_{124}$ & $(2,2,3)$ \\
		$(2,2,2,3)$ & $H_{4}$ & $(2,2,2)$ \\
		$(2,2,2,2)$ & $H_{1234}$ & $(2,2,2)$ \\
		$(1,2,2,2)$ & $H_{2}$ & $(1,2,2)$ \\
		$(1,1,2,2)$ & $H_{14}$ & $(1,1,2)$ \\
		$(1,1,1,2)$ & $H_{2}$ & $(1,1,2)$ \\
		$(0,1,1,2)$ & $H_{23}$ & $(0,1,2)$ \\
		$(0,0,1,2)$ & $H_{1}$ & $(0,0,1)$ \\
		$(0,0,1,1)$ & $H_{1}$ & $(0,0,1)$ \\
		$(0,0,0,1)$ & $H_{14}$ & $(0,0,0)$\\
		\hline
	\end{tabular}\caption{Inductive chain for $(\CA_{G_1},\mu_1)$ for $\mu=(2, 2, 2, 1, 2, 1, 1, 1, 2, 1, 1, 1, 1, 1)$.}\label{G1IndFree}	
\end{table}



\begin{table}[H]
	\renewcommand{\arraystretch}{0.7}
	\begin{tabular}{lll}
		\hline
		$\exp(\CA,\mu)$ & $H$ & $\exp(\CA'',\mu^*)$ \\
		\hline
		\hline
		$\exp(\CA_{C_3},\mu)=(7,7,7)$ & $H_{13}$ & $(7,7)$ \\
		$(6,7,7)$ & $H_{13}$ & $(7,7)$ \\
		$(5,7,7)$ & $H_{12}$ & $(5,7)$ \\
		$(5,6,7)$ & $H_{2}$ & $(5,6)$ \\
		$(5,6,6)$ & $H_{23}$ & $(5,6)$ \\
		$(5,5,6)$ & $H_{1}$ & $(5,5)$ \\
		$(5,5,5)$ & $H_{12}$ & $(5,5)$ \\
		$(4,5,5)$ & $H_{13}$ & $(4,5)$ \\
		$(4,4,5)$ & $H_{123}$ & $(4,4)$ \\
		$(4,4,4)$ & $H_{1}$ & $(4,4)$ \\
		$(3,4,4)$ & $H_{2}$ & $(3,4)$ \\
		$(3,3,4)$ & $H_{2}$ & $(3,3)$ \\
		$(3,3,3)$ & $H_{1}$ & $(3,3)$ \\
		$(2,3,3)$ & $H_{12}$ & $(2,3)$ \\
		$(2,2,3)$ & $H_{3}$ & $(2,2)$ \\
		$(2,2,2)$ & $H_{23}$ & $(2,2)$ \\
		$(1,2,2)$ & $H_{3}$ & $(1,2)$ \\
		$(1,1,2)$ & $H_{23}$ & $(1,2)$ \\
		$(0,1,2)$ & $H_{123}$ & $(0,1)$\\
		$(0,1,1)$ & $H_{3}$ & $(0,1)$ \\
		$(0,0,1)$ & $H_{123}$ & $(0,0)$\\
		\hline 
	\end{tabular}\caption{Inductive chain for $(\CA_{C_3},\mu)$ for $\mu\equiv 3$.}\label{C3ThreeIndFree}
\end{table}


\begin{table}
	\centering
	\renewcommand{\arraystretch}{0.7}
	\begin{tabular}{llll}
		\hline
		Addition/Deletion & $\exp(\CA,\mu)$ & $H$ & $\exp(\CA'',\mu^*)$ \\
		\hline
		\hline
		Deletion & $\exp(\CA_{G_2},\mu_2)=(5,5,5,5)$ & $H_{234}$ & $(5,5,5)$ \\
		Addition & $(5,5,5,6)$ & $H_{14}$ & $(5,5,5)$ \\
		Addition & $(5,5,5,5)$ & $H_{1}$ & $(5,5,5)$ \\
		Addition & $(4,5,5,5)$ & $H_{134}$ & $(4,5,5)$ \\
		Addition & $(4,4,5,5)$ & $H_{1234}$ & $(4,4,5)$ \\
		Addition & $(4,4,4,5)$ & $H_{4}$ & $(4,4,4)$ \\
		Addition & $(4,4,4,4)$ & $H_{124}$ & $(4,4,4)$ \\
		Addition & $(3,4,4,4)$ & $H_{2}$ & $(3,4,4)$ \\
		Addition & $(3,3,4,4)$ & $H_{23}$ & $(3,3,4)$ \\
		Addition & $(3,3,3,4)$ & $H_{1}$ & $(3,3,3)$ \\
		Addition & $(3,3,3,3)$ & $H_{3}$ & $(3,3,3)$ \\
		Addition & $(2,3,3,3)$ & $H_{13}$ & $(2,3,3)$ \\
		Addition & $(2,2,3,3)$ & $H_{123}$ & $(2,2,3)$ \\
		Addition & $(2,2,2,3)$ & $H_{234}$ & $(2,2,3)$ \\
		Addition & $(1,2,2,3)$ & $H_{3}$ & $(1,2,2)$ \\
		Addition & $(1,2,2,2)$ & $H_{12}$ & $(1,2,2)$ \\
		Addition & $(1,1,2,2)$ & $H_{34}$ & $(1,1,2)$ \\
		Addition & $(1,1,1,2)$ & $H_{2}$ & $(1,1,1)$ \\
		Addition & $(1,1,1,1)$ & $H_{24}$ & $(1,1,1)$ \\
		Addition & $(0,1,1,1)$ & $H_{4}$ & $(0,1,1)$ \\
		Addition & $(0,0,1,1)$ & $H_{1234}$ & $(0,0,1)$ \\
		Addition & $(0,0,0,1)$ & $H_{234}$ & $(0,0,0)$ \\
		\hline
	\end{tabular}\caption{Recursive chain for $(\CA_{G_2},\mu_2)$.}\label{tableRecFree}
\end{table}



\clearpage

\end{document}